\tikzset{curve/.style={settings={#1},to path={(\tikztostart)
    .. controls ($(\tikztostart)!\pv{pos}!(\tikztotarget)!\pv{height}!270:(\tikztotarget)$)
    and ($(\tikztostart)!1-\pv{pos}!(\tikztotarget)!\pv{height}!270:(\tikztotarget)$)
    .. (\tikztotarget)\tikztonodes}},
    settings/.code={\tikzset{quiver/.cd,#1}
        \def\pv##1{\pgfkeysvalueof{/tikz/quiver/##1}}},
    quiver/.cd,pos/.initial=0.35,height/.initial=0}
\newcommand{\citey}{\citeyearpar}
\renewcommand{\cite}{\citep}
\title[A 2-categorical analysis of context comprehension]{A 2-categorical analysis\\of context comprehension}
\author[Coraglia]{Greta Coraglia}
\address{Università degli Studi di Milano, Via Festa del Perdono 7 - 20122 Milano}
\email{greta.coraglia@unimi.it}
\author[Emmenegger]{Jacopo Emmenegger}
\address{Università degli Studi di Genova, Via Dodecaneso 35 - 16146 Genova}
\email{emmenegger@dima.unige.it}
\thanks{%
We thank the anonymous referee for a careful reading.\\
The first author's research has been partially funded by the Project PRIN2020 ``BRIO'' (2020SSKZ7R) and by by the Department of Philosophy ``Piero Martinetti'' of the University of Milan under the Project ``Departments of Excellence 2023-2027'', both awarded by the Ministry of University and Research (MUR). The second author's research has been partially funded by the project ``PNRR - Young Researchers'' (SOE\_0000071) of the Italian Ministry of University and Research (MUR).
}
\keywords{dependent type theory, category with families, comprehension category, structure-semantics adjunction, 2-category theory, coalgebra}
\theoremstyle{definition}
\newtheorem{definition}{Definition}[section]
\theoremstyle{theorem}
\newtheorem{theorem}[definition]{Theorem}
\newtheorem{proposition}[definition]{Proposition}
\newtheorem{corollary}[definition]{Corollary}
\newtheorem{lemma}[definition]{Lemma}
\theoremstyle{remark}
\newtheorem{remark}[definition]{Remark}
\newtheorem{example}[definition]{Example}
\newcommand{\gencwf}[1]{generalised categor#1 with families}
\newcommand{\gcwf}{gcwf}
\newcommand{\gcwfs}{gcwfs}
\newcommand{\wccmd}[1][]{w-comonad#1}
\newcommand{\wccomonad}[1][]{weakening and contraction comonad#1}
\newcommand{\ie}{\textit{i.e.}\ }
\newcommand{\eg}{\textit{e.g.}\ }
\newcommand{\define}[1]{\emph{#1}}
\newcommand{\catof}[1]{\mathbf{#1}}
\newcommand{\ctg}[1]{\mathcal{#1}}
\newcommand{\car}{\rightarrowtriangle}
\newcommand{\cod}{\mathsf{cod}}
\newcommand{\dom}{\mathsf{dom}}
\newcommand{\id}{\mathrm{id}}
\newcommand{\Id}{\mathrm{Id}}
\newcommand{\idd}{\boldsymbol{id}}
\newcommand{\ctgid}[1]{\Id_{#1}}
\newcommand{\natiso}{\stackrel{\sim}{\Rightarrow}}
\newcommand{\due}{^{\mathsf{2}}}
\newcommand{\coal}{\mathrm{CoAlg}}
\newcommand{\emfrg}[1]{\mathrm{U}_{#1}}
\newcommand{\emrgt}[1]{\mathrm{R}_{#1}}
\newcommand{\emcmp}[1]{\mathrm{K}_{#1}}
\newcommand{\eminvcmp}[1]{\mathrm{J}_{#1}}
\newcommand{\opp}{^{\mathsf{op}}}
\newcommand{\cop}{^{\mathsf{co}}}
\newcommand{\Ty}{\;\texttt{Type}}
\newcommand{\Ctx}{\;\texttt{ctx}}
\newcommand{\ctxcat}{\mathsf{Ctx}}
\newcommand{\Cat}{\catof{Cat}}
\newcommand{\fibCat}{\catof{Fib}}
\newcommand{\jdtt}[1]{\mathbb{#1}}
\newcommand{\duu}{\dot{\mathcal{U}}}
\newcommand{\uu}{\mathcal{U}}
\newcommand{\du}{\dot{u}}
\newcommand{\ee}{\mathcal{E}}
\newcommand{\tand}{\quad\text{and}\quad}
\newcommand{\bb}{\ctg{B}}
\newcommand{\adj}[1]{\rotatebox{#1}{$\dashv$}}
\newcommand{\pseudosub}[1]{#1_{\mathbf{ps}}}
\newcommand{\strictsub}[1]{#1_{\mathbf{str}}}
\newcommand{\looseArg}{^{\cong}}
\newcommand{\cmdCatArg}{Cmd}
\newcommand{\cmdCat}{\catof{\cmdCatArg}}
\newcommand{\cmdpsCat}{\catof{\pseudosub{\cmdCatArg}}}
\newcommand{\cmdstrCat}{\catof{\strictsub{\cmdCatArg}}}
\newcommand{\adjLCatArg}{LAdj}
\newcommand{\adjLCat}{\catof{\adjLCatArg}}
\newcommand{\adjpsLCat}{\pseudosub{\adjLCat}}
\newcommand{\adjstrLCat}{\strictsub{\adjLCat}}
\newcommand{\adjLlCat}{\catof{\adjLCatArg}\looseArg}
\newcommand{\adjpsLlCat}{\pseudosub{\adjLlCat}}
\newcommand{\adjstrLlCat}{\strictsub{\adjLlCat}}
\newcommand{\adjRCat}{\catof{RAdj}}
\newcommand{\jdttCatArg}{GCwF}
\newcommand{\jdttCat}{\catof{\jdttCatArg}}
\newcommand{\jdttpsCat}{\catof{\pseudosub{\jdttCatArg\!}}}
\newcommand{\jdttstrCat}{\catof{\strictsub{\jdttCatArg\!}}}
\newcommand{\jdttlCat}{\catof{\jdttCatArg\looseArg}}
\newcommand{\jdttlpsCat}{\catof{\pseudosub{\jdttCatArg}\looseArg}}
\newcommand{\dscjdttCat}{\catof{Dsc\jdttCatArg}}
\newcommand{\dscjdttpsCat}{\catof{\pseudosub{Dsc\jdttCatArg}}}
\newcommand{\dscjdttstrCat}{\catof{\strictsub{Dsc\jdttCatArg}}}
\newcommand{\cwfCatArg}{CwF}
\newcommand{\cwfCat}{\catof{\cwfCatArg}}
\newcommand{\cwfpsCat}{\catof{\pseudosub{\cwfCatArg}}}
\newcommand{\cwfstrCat}{\catof{\strictsub{CwF}}}
\newcommand{\wccmdCatArg}{WCmd}
\newcommand{\wccmdCat}{\catof{\wccmdCatArg}}
\newcommand{\wccmdpsCat}{\catof{\pseudosub{\wccmdCatArg}}}
\newcommand{\wccmdstrCat}{\catof{\strictsub{\wccmdCatArg}}}
\newcommand{\compcatCatArg}{CompCat}
\newcommand{\compcatCat}{\catof{\compcatCatArg}}
\newcommand{\compcatpsCat}{\catof{\pseudosub{\compcatCatArg}}}
\newcommand{\compcatstrCat}{\catof{\strictsub{\compcatCatArg}}}
\newcommand{\dsccompcatCat}{\catof{Dsc\compcatCatArg}}
\newcommand{\dsccompcatpsCat}{\catof{\pseudosub{Dsc\compcatCatArg}}}
\newcommand{\splcompcatCat}{\catof{Spl\compcatCatArg}}
\newcommand{\splcompcatpsCat}{\catof{\pseudosub{Spl\compcatCatArg}}}
\newcommand{\fllcompcatpsCat}{\catof{\pseudosub{Full\compcatCatArg}}}
\newcommand{\fllsplcompcatpsCat}{\catof{\pseudosub{FullSpl\compcatCatArg}}}
\newcommand{\twofctr}[1]{\mathrm{#1}}
\newcommand{\cmdtoadj}{\twofctr{EM}}
\newcommand{\cmdtoadjloo}{\twofctr{EM^{\looseArg}}}
\newcommand{\adjtocmd}{\twofctr{C}}
\newcommand{\adjlootocmd}{\twofctr{C^{\looseArg}}}
\newcommand{\jdtttowccmd}{\twofctr{\hat{C}}}
\newcommand{\jdttpstowccmd}{\twofctr{\hat{C}{}^{\looseArg}}}
\newcommand{\wccmdtojdtt}{\twofctr{\hat{EM}}}
\newcommand{\wccmdtojdttps}{\twofctr{\hat{EM}{}^{\looseArg}}}
\newcommand{\wccmdtocmpct}{\twofctr{Y}}
\newcommand{\cmpcttowccmd}{\twofctr{X}}
\newcommand{\comptogcfw}{\twofctr{F}}
\newcommand{\gcwftocomp}{\twofctr{G}}
\newcommand{\unitadjcmd}{\boldsymbol{\eta}}
\newcommand{\unitadjloocmd}{\boldsymbol{\hat{\eta}}}
\begin{document}
\maketitle
\begin{abstract}
We consider the equivalence between the two main categorical models for the type-theoretical operation of context comprehension, namely P.~Dybjer's categories with families and B.~Jacobs' comprehension categories, and generalise it to the non-discrete case.
The classical equivalence can be summarised in the slogan: ``terms as sections''.
By recognising ``terms as coalgebras'', we show how to use the structure-semantics adjunction to prove that a 2-category of comprehension categories is biequivalent to a 2-category of (non-discrete) categories with families.
The biequivalence restricts to the classical one proved by Hofmann in the discrete case.
It also provides a framework where to compare different morphisms of these structures that have appeared in the literature, varying on the degree of preservation of the relevant structure.
We consider in particular morphisms defined by Claraimbault--Dybjer, Jacobs, Larrea, and Uemura.
\end{abstract}

\section{Introduction}\label{intro}
The problem of modelling the structural rules of type dependency using categories has motivated the study of several structures,
varying in generality, occurrence in nature, and adherence to the syntax of dependent type theory.
One aspect, that involving free variables and substitution,
is neatly dealt with using (possibly refinements of) Grothendieck fibrations.
The other main aspect of type dependency is the possibility of making assumptions
as encoded in the two rules below
\[
\frac{\Gamma\vdash A\Ty}{\vdash \Gamma.A\Ctx}
\hspace{4em}
\frac{\Gamma\vdash A\Ty}{\Gamma.A \vdash \texttt{v}_A : A}
\]
where the first one (\emph{context extension}) extends the context $\Gamma$ with the type $A$,
and the second one (\emph{assumption}) provides a ``generic term'' of $A$ in context $\Gamma.A$.
In the first order setting,
they allow us to add assumptions to a context,
and to prove what has been assumed, respectively.

The present paper provides a purely 2-categorical comparison of the two main categorical accounts of these two rules:
Jacobs' comprehension categories~\cite{jacobs1999categorical}
and Dybjer's categories with families~\cite{dybjer_inttt}.
They differ in that the former gives prominence to context extension,
and the latter to assumption.
For the comparison, a taxonomy of morphisms of both structures is proposed,
from lax versions to strict ones,
and a general biequivalence between 2-categories of lax morphisms is proved. This then specialises to (possibly stricter) equivalences between subcategories.

The taxonomy that we propose is based on the one,
well established,
for morphisms between comonads and between adjunctions~\cite{KellyStreetReview2cats}.
The fact that comprehension categories can be formulated as a pair of a fibration and a (suitable) comonad has been known since the early days.
In fact, Jacobs introduces these \emph{weakening and contraction comonads} first\footnote{Actually, they are introduced first in \cite{jacobs1999categorical}, but in the earlier \cite{comprehensioncats} they do not appear, in fact.} and uses them to justify comprehension categories~\cite[Definition~9.3.1, Theorem~9.3.4]{jacobs1999categorical}
(whose definition appears in a theorem).
We call them \emph{\wccmd[s]} for short.
On the other hand,
the formulation of categories with families as a pair of discrete fibrations over the same base connected by a (suitable) adjunction is also known,
but its formulation took some time and the observations (and proofs) of, among others, Fiore~\citey{fiore_att}, Awodey~\citey{awodey_2018}, and Uemura~\citey[Section~3]{Uemura2023}.
In order to have a uniform comparison with comprehension categories,
we drop the assumption of discreteness on the two fibrations
and call the resulting structure a \emph{\gencwf{y}},
see Definition~\ref{jdtt}, which has previously appeared in \cite{cjd} under a different name.

\begin{figure}[bh]
\caption{%
The underlying diagrams in $\Cat$ of, from left to right,
a comprehension category,
a \wccmd,
and a \gencwf{y}.
}
\[\begin{tikzcd}
\uu	\ar[dr,"u"'] \ar[rr,"{\chi}"]
&&	\ctg{B}^2	\ar[dl,"\mathrm{cod}"]
\\
&	\ctg{B}	&
\end{tikzcd}
\hspace{4em}
\begin{tikzcd}
\uu \ar[d,"u"] \ar[loop left]{l}{K}
\\
\ctg{B}
\end{tikzcd}
\hspace{4em}
\begin{tikzcd}
	{\duu} && {\uu} \\
	& {\ctg{B}}
	\arrow["{\dot{u}}"', from=1-1, to=2-2]
	\arrow["u", from=1-3, to=2-2]
	\arrow[""{name=0, anchor=center, inner sep=0}, "\Sigma"', curve={height=10pt}, from=1-1, to=1-3]
	\arrow[""{name=1, anchor=center, inner sep=0}, "\Delta"', curve={height=10pt}, from=1-3, to=1-1]
	\arrow["\dashv"{anchor=center, rotate=90}, draw=none, from=0, to=1]
\end{tikzcd}\]
\end{figure}
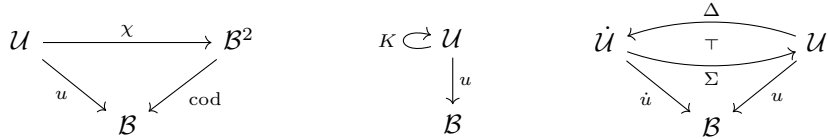

The correspondence between categories with families and comprehension categories is well-understood at the level of the objects.
Indeed, categories with families are in bijection with Cartmell's categories with attributes~\cite{CARTMELL1986209,moggi_1991},
which can be identified, via the Grothendieck construction,
with comprehension categories with discrete fibration.
The original proof is due to Hofmann~\citey[Section~3]{hofmann_1997},
and can be easily extended to an equivalence between categories of strict morphisms.
This is made explicit in~\cite{BlancoJ},
which provides a comprehensive investigation of the relations among several categories of structures for type dependency.
The morphisms considered there are, however, only \emph{strict} morphisms:
they preserve comprehensions on the nose.
If we wish to compare categories with families with structures not arising from syntax, strict morphisms are no longer useful.
As a case in point, consider Claraimbault and Dybjer's biequivalence between extensional type theories and locally cartesian closed categories~\cite{ClairambaultDybjer}.
Extensional type theories are there presented by certain categories with families with additional structure.
The morphisms between them needed to make the biequivalence work,
called \emph{pseudo cwf-morphisms},
are not the strict morphisms of cwfs defined by Dybjer~\citey{dybjer_inttt} and considered by Blanco~\citey{BlancoJ}.
In fact,
these pseudo cwf-morphisms are not morphisms of discrete fibrations,
and do not strictly preserve generic terms.
They are, however, morphisms of (certain) \gencwf{ies}.

Categories with families are in bijection with discrete comprehension categories because,
for every object $A$ of $\uu$,
the objects of $\duu$ mapped to $A$ (the terms) are in bijection with sections of the display map $\chi A$.
But sections can be describes as coalgebras,
and these sections are the coalgebras of the \wccmd{} $K$ induced by $\chi$.
This simple observation suggests that
the classical correspondence between categories with families and comprehension categories
could be phrased within the framework of the correspondence between adjunctions and comonads.
The internal structure-semantics adjunction~\cite{street72formnd}
can be used to show that comonads are 2-reflective in a suitable 2-category of adjunctions,
where the 1-cells are pairs of functors commuting with the left adjoints.
Of course, this reflection is in general far from being an equivalence.
Nevertheless, we show that it lifts to a 2-reflection between \gencwf{ies} and \wccmd[s]
which becomes a biequivalence
if one takes as morphisms of \gencwf{ies}
functors that commute with left adjoints up to a natural vertical iso.
We call these \emph{loose} morphisms.
In type theoretic terms, this means preserving typing only up to iso.
The discrete case is recovered thanks to the fact that
vertical isos in discrete fibrations are identities.

Section~\ref{sec:cmd-adj}
reviews the taxonomy of morphisms of (co)monads and adjunctions
and the details of the 2-reflection between them~\eqref{thm:cmd-adj},
and extends it to the case of loose morphisms of adjunctions in~\eqref{thm:cmd-adjloo}.
Section~\ref{sec:defs}
defines the 2-categories of interest:
in order, those of comprehension categories in~\eqref{cmpct-2cats},
of \wccmd[s] in~\eqref{2catofwccmd},
and of \gencwf{ies} in~\eqref{jdtt-2cat}.
Section~\ref{sec:bieqv}
proves the biequivalence of comprehension categories and \gencwf{ies}
by establishing first a biequivalence between comprehension categories and \wccmd[s] in~\eqref{eqv-wccmd-cmpct}, 
and then a biequivalence between the latter and \gencwf{ies} in~\eqref{eqv-wccmd-jdtt}.
We conclude in~\eqref{sec:disc-full-compcats} considering the discrete case,
and the case of full comprehension categories.

\section{A biadjunction between comonads and {adjunctions}}
\label{sec:cmd-adj}

Every adjunction $L \dashv R$ determines a comonad on the composite $LR$ (and a monad on $RL$),
as it was observed in~\cite{huber61}.
Conversely, every comonad determines an adjunction
via the Eilenberg--Moore construction of the category of coalgebras~\cite{em65}.
In fact it determines two adjunctions---%
the second one being given by the Kleisli construction~\cite{kleisli65} of the category of free algebras,
but we shall only be interested in the former.
As it turns out, the Eilenberg--Moore construction provides a fully faithful embedding of comonads into adjunctions,
with a reflector given by the comonad induced by an adjunction.
This can be seen restricting (and dualising) the classical structure-semantics adjunction~\cite{Dubuc1970}.
In this section we shall recall some details of this construction,
which we need to show that it extends to a 2-reflection,
and that it further extends to the case of \emph{loose} morphisms of adjunctions.

\subsection{Morphisms of adjunctions and of comonads}

The 2-category of comonads can be defined as a suitable dual of a 2-category of formal monads.
We refer to the original source~\cite{street72formnd} for what we need of the theory of formal monads in a 2-category.

Given a 2-category $\mathbf{C}$,
we write $\mathbf{C}^{\mathrm{op}}$ for the 2-category with the 1-cells reversed,
and $\mathbf{C}^{\mathrm{co}}$ for the 2-category with the 2-cells reversed.

\begin{definition}\label{def:cmdCat}
The 2-category $\cmdCat$ is defined as $\catof{Mnd}(\Cat^{\mathrm{co}})^{\mathrm{co}}$,
where $\catof{Mnd}(\mathbf{X})$ denotes the 2-category of formal monads in a 2-category $\mathbf{X}$.
The definition unfolds as follows.

A \define{0-cell} is a pair of a category $\mathcal{C}$ and a comonad $(K,\epsilon,\nu)$ on $\mathcal{C}$.

A \define{1-cell} from $(\mathcal{C},K,\epsilon,\nu)$ to
$(\mathcal{C}',K',\epsilon',\nu')$ is a (lax) morphism of comonads,
that is, a pair $(H,\theta)$ of a functor
$H \colon \mathcal{C} \to \mathcal{C}'$
and a natural transformation $\theta \colon HK \Rightarrow K'H$
such that the diagrams below commute.
\[\begin{tikzcd}[row sep=2em,column sep=2em]
HK	\ar[dr,Rightarrow,"H\epsilon"{swap}] \ar[rr,Rightarrow,"\theta"]
&&	K'H	\ar[dl,Rightarrow,"\epsilon' H"]
\\
&	H	&
\end{tikzcd}
\hspace{3em}
\begin{tikzcd}[row sep=2em,column sep=2em]
HK	\ar[d,Rightarrow,"H\nu"{swap}] \ar[rr,Rightarrow,"\theta"]
&&	K'H	\ar[d,Rightarrow,"\nu'H"]
\\
HK^2	\ar[r,Rightarrow,"\theta K"{swap}]
&	K'HK	\ar[r,Rightarrow,"K'\theta"{swap}]
&	K'^2H
\end{tikzcd}\]

The \define{composite} of two (composable) morphisms of comonads $(H_1,\theta_1)$ and $(H_2,\theta_2)$
is $(H_2H_1,(\theta_2H_1)(H_2\theta_1))$.

A \define{2-cell} from $(H_1,\theta_1)$ to $(H_2,\theta_2)$ is a natural transformation $\phi \colon H_1 \Rightarrow H_2$ such that
$(K'\phi)\theta_1 = \theta_2(\phi K)$.

A morphism of comonads $(H,\theta)$ is a \emph{pseudo} (respectively, \emph{strict}) morphism
if $\theta$ is invertible (respectively, the identity).
The identity morphism is strict,
and it is clear that pseudo and strict morphisms are closed under composition.
We write $\cmdpsCat$ and $\cmdstrCat$ for the 2-full 2-subcategories of $\cmdCat$ with the same 0-cells, and only those 1-cells $(H,\theta)$
which are pseudo (respectively, strict) morphisms of comonads.
\end{definition}

\begin{remark}\label{lifttocoal}
The right-hand diagram in the definition of lax morphism of comonads~\eqref{def:cmdCat}
can be read as saying that,
given a lax morphism of comonads
${(H,\theta) \colon}$ ${(K,\epsilon,\nu) \to (K',\epsilon',\nu')}$,
each component $\theta_E$ is a morphism of coalgebras
from $(HKE,$ $\theta_{KE}\circ H\nu_E)$ to $(K'HE,\nu'_{HE})$.
This means that $\theta$ lifts to $\hat{\theta}$ below,
in the sense that $\emfrg{K'}\hat{\theta} = \theta$.
\[\begin{tikzcd}[column sep=4em,row sep=3em]
\mathcal{C}	\ar[d,"{\emrgt{K}}"',"\ "{pos=.5,name=dom}] \ar[r,"H"]
&	\mathcal{C}'	\ar[d,"{\emrgt{K'}}","\ "'{pos=.5,name=cod}]
\\
\coal(K)	\ar[r,"{\coal(H,\theta)}"']
&	\coal(K')	\ar[from=dom,to=cod,Rightarrow,"\hat{\theta}"]
\end{tikzcd}\]
\end{remark}

Several kinds of morphisms between adjunctions can be considered.
The list below is compiled from the squares of one of the two double categories of adjunctions defined in~\cite[pg.~86]{KellyStreetReview2cats}.
In particular, all these morphisms have unital and associative compositions.
The double category defined by Kelly and Street consists of:
objects are categories,
vertical morphisms given by adjunctions, directed according to the left adjoint;
horizontal morphisms given by functors;
squares given by natural transformations filling the square involving left adjoints,
as in the left-hand square below.
\begin{equation}\label{double-adj}
\begin{tikzcd}[column sep=4em,row sep=2.5em]
\ctg{D}	\ar[d,"L"',"\ "{pos=.5,name=cd}] \ar[r,"G"]
&	\ctg{D}'	\ar[d,"L'","\ "'{pos=.5,name=dm}]
\\
\ctg{C}	\ar[r,"F"']
&	\ctg{C}'
\ar[from=dm,to=cd,Rightarrow,"\zeta"']
\end{tikzcd}
\hspace{5em}
\begin{tikzcd}[column sep=4em,row sep=2.5em]
\ctg{D}	\ar[r,"G"]	&	\ctg{D}'
\\
\ctg{C}	\ar[u,"R","\ "'{pos=.5,name=dm}] \ar[r,"F"']
&	\ctg{C}'	\ar[u,"R'"',"\ "{pos=.5,name=cd}]
\ar[from=dm,to=cd,Rightarrow,"\xi"']
\end{tikzcd}
\end{equation}
The other one is defined similarly, but using right adjoints instead
as in the right-hand square above.
These two double categories are isomorphic~\cite[Proposition~2.2]{KellyStreetReview2cats}.
The isomorphism is the identity on everything but 2-cells,
and maps 2-cells $\zeta \colon L'G \Rightarrow FL$ and $\xi \colon GR \Rightarrow R'F$
to their \define{mates}
$\zeta^\# := (R'F\epsilon)(R'\zeta R)(\eta'GR)$
and $\xi^\# := (R'F\epsilon)(R'\zeta R)(\eta'GR)$,
as shown below in~\eqref{def:mate}.
\begin{equation}\label{def:mate}
\begin{tikzcd}[column sep=6ex, row sep=5ex]
\ctg{C}	\ar[dr,bend right=2em,"\Id"',"\ "{pos=.4,name=epcd}] \ar[r,"R"]
&	\ctg{D}		\ar[d,"L"{description},"\ "'{pos=.55,name=epdm},"\ "{pos=.55,name=zcd}] \ar[r,"G"]
&	\ctg{D}'	\ar[d,"L'"{description},"\ "{pos=.55,name=etcd},"\ "'{pos=.55,name=zdm}]
	\ar[dr,bend left=2em,"\Id",""'{pos=.65,name=etdm}]
&\\
&	\ctg{C}		\ar[r,"F"']
&	\ctg{C}'	\ar[r,"R'"']
&	\ctg{D}'
\ar[from=zdm,to=zcd,Rightarrow,"\zeta"']
\ar[from=etdm,to=etcd,Rightarrow,"\eta'"']
\ar[from=epdm,to=epcd,Rightarrow,"\epsilon"']
\end{tikzcd}
\hspace{8ex}
\begin{tikzcd}[column sep=6ex, row sep=5ex]
&	\ctg{D}	\ar[r,"G"]
&	\ctg{D}' \ar[r,"L'"]
&	\ctg{C}'
\\
\ctg{D}	\ar[r,"L"'] \ar[ur,bend left=2em,"\Id","\ "'{pos=.43,name=etdm}]
&	\ctg{C}	\ar[u,"R"'{description},"\ "'{pos=.55,name=dm},"\ "{pos=.55,name=etcd}] \ar[r,"F"']
&	\ctg{C}'	\ar[u,"R'"'{description},"\ "'{pos=.55,name=epdm},"\ "{pos=.55,name=cd}]
\ar[ur,bend right=2em,"\Id"',"\ "{pos=.68,name=epcd}]
\ar[from=dm,to=cd,Rightarrow,"\xi"']
\ar[from=etdm,to=etcd,Rightarrow,"\eta"']
\ar[from=epdm,to=epcd,Rightarrow,"\epsilon'"']
\end{tikzcd}
\end{equation}

As we are not interested in composing adjunctions,
we take adjunctions as objects and consider the squares,
\ie the triples consisting of two functors and the natural transformation, as morphisms.
Moreover, we shall only be interested in those squares whose transformation is invertible.

\begin{definition}\label{def:adjmorph}
Let $(L,R,\eta,\epsilon)$ and $(L',R',\eta',\epsilon')$ be adjunctions,
where $L \colon \mathcal{D} \to \mathcal{C}$ and $L' \colon \mathcal{D}' \to \mathcal{C}'$.

A \define{left loose morphism of adjunctions}
from $(L,R,\eta,\epsilon)$ to $(L',R',\eta',\epsilon')$ is a triple $(F,G,\zeta)$
where $F \colon \mathcal{C} \to \mathcal{C}'$
and $G \colon \mathcal{D} \to \mathcal{D}'$ are functors,
and $\zeta \colon L'G \natiso FL$ is a natural iso.

A \define{right loose morphism of adjunctions}
is defined dually as a triple $(F,G,\xi)$
where $F \colon \mathcal{C} \to \mathcal{C}'$
and $G \colon \mathcal{D} \to \mathcal{D}'$ are functors,
and $\xi \colon GR \natiso R'F$ is a natural iso.

A \define{left morphism of adjunctions} is a left loose morphism of adjunctions with $\zeta = \id$.
In particular, in this case $L'G=FL$.
Similarly, a \define{right morphism of adjunctions} is a right loose morphism of adjunctions with $\xi = \id$.

A left loose morphism of adjunctions $(F,G,\zeta)$
is a \define{pseudo left loose morphism} if the mate $\zeta^\#$ is invertible.
It is a \define{strict left loose morphism} if the mate $\zeta^\#$ is the identity.
In particular, in this case $GR = R'F$.
\end{definition}

\begin{remark}\label{form-adj}
It follows from~\cite[Proposition~12]{melliesrolland2020}
that a left loose morphism of adjunctions $(F,G,\zeta)\colon (L,R) \to (L',R')$
gives rise to a formal adjunction in the 2-category $\catof{Fun_{ps}}$,
which consists of functors, squares commuting up to a natural iso, and pairs of compatible natural transformations.
The formal adjoints are the two 1-cells
$(L,L',\zeta^{-1})\colon(\ctg{D},\ctg{D}',G)\to(\ctg{C},\ctg{C},F)$
and $(R,R',\zeta^{\#})\colon(\ctg{C},\ctg{C},F)\to(\ctg{D},\ctg{D}',G)$.
However, note that our objects are adjunctions whereas their objects are functors:
this setting seems somewhat orthogonal to ours and not easily comparable.
\end{remark}

The composite of two (composable) left loose morphisms of adjunctions
$(F_1,G_1,\zeta_1)$ and $(F_2,G_2,\zeta_2)$
is $(F_2F_1,G_2G_1,(F_2\zeta_1)(\zeta_2G_1))$.
It follows from~(\ref{rem:adjmorph}.\ref{rem:adjmorph:matecmp}) below
that pseudo and strict left loose morphisms are closed under composition.
The same is true for right loose morphisms.

\begin{remark}\label{rem:adjmorph}
Using naturality of the arrows involved and the triangular identities,
it is straightforward to verify the following facts
about mates of left loose morphisms.
\begin{enumerate}
\item\label{rem:adjmorph:matecmp}
Let $(F_1,G_1,\zeta_1)$ and $(F_2,G_2,\zeta_2)$
be two composable left loose morphisms of adjunctions.
Then
\[
(F_2\zeta_1 \circ \zeta_2 G_1)^\# = \zeta_2^\#F_1 \circ G_2\zeta_1^\#.
\]
\item\label{rem:adjmorph:onecell}
Let $(F,G,\zeta)$ be a left loose morphism of adjunctions.
Then the two squares below commute.
\[\begin{tikzcd}[row sep=2em,column sep=2em]
G	\ar[d,Rightarrow,"G\eta"{swap}] \ar[r,Rightarrow,"\eta'G"]
&	R'L'G	\ar[d,Rightarrow,"R'\zeta"]
\\
GRL	\ar[r,Rightarrow,"\zeta^\#L"{swap}]	&	R'FL
\end{tikzcd}
\hspace{5em}
\begin{tikzcd}[row sep=2em,column sep=2em]
L'GR	\ar[d,Rightarrow,"L'\zeta^\#"{swap}] \ar[r,Rightarrow,"\zeta R"]
&	FLR	\ar[d,Rightarrow,"F\epsilon"]
\\
L'R'F	\ar[r,Rightarrow,"\epsilon'F"{swap}]	&	F
\end{tikzcd}\]
\item\label{rem:adjmorph:twocell}
Consider two left loose morphisms of adjunctions
$(F_1,G_1,\zeta_1)$ to $(F_2,G_2,\zeta_2)$
and a pair $(\phi,\psi)$ of natural transformations
$\phi \colon F_1 \Rightarrow F_2$ and $\psi \colon G_1 \Rightarrow G_2$.
Then the left-hand square below commutes if and only if the right-hand one does.
\[\begin{tikzcd}[row sep=2em,column sep=2em]
L'G_1	\ar[d,Rightarrow,"L'\psi"'] \ar[r,Rightarrow,"{\zeta_1}"]
&	F_1L	\ar[d,Rightarrow,"\phi L"]
\\
L'G_2	\ar[r,Rightarrow,"\zeta_2"']
&	F_2L
\end{tikzcd}
\hspace{5em}
\begin{tikzcd}[row sep=2em,column sep=2em]
G_1R	\ar[d,Rightarrow,"\psi R"'] \ar[r,Rightarrow,"{\zeta_1^\#}"]
&	R'F_1	\ar[d,Rightarrow,"R'\phi"]
\\
G_2R	\ar[r,Rightarrow,"{\zeta_2^\#}"']
&	R'F_2
\end{tikzcd}\]
\end{enumerate}
\end{remark}

\begin{definition}\label{def:adjCat}
The 2-category $\adjLlCat$ is defined as follows.

A \define{0-cell} is an adjunction.

A \define{1-cell} is a left loose morphism of adjunctions.

A \define{2-cell} from $(F_1,G_1,\zeta_1)$ to $(F_2,G_2,\zeta_2)$
is a pair $(\phi,\psi)$ of natural transformations
$\phi \colon F_1 \Rightarrow F_2$ and $\psi \colon G_1 \Rightarrow G_2$
such that the left-hand square above in (\ref{rem:adjmorph}.\ref{rem:adjmorph:twocell}) commutes.
Pasting squares, we see that the 2-cells are closed under component-wise composition.

The 2-category $\adjLCat$ is the 2-full sub-2-category of $\adjLlCat$ on the 1-cells which are left morphisms of adjunctions.
Here the 2-cells are pairs $(\phi,\psi)$ such that $L'\psi=\phi L$.

We write $\adjpsLlCat$, $\adjpsLCat$, $\adjstrLlCat$, and $\adjstrLCat$
for the 2-full sub-2-categories on pseudo and strict left (loose) morphisms,
respectively.
In the last two cases the 2-cells are pairs $(\phi,\psi)$ such that $R'\phi=\psi R$.

The 2-categories $\adjRCat^{\cong}$ and $\adjRCat$ are defined similarly to $\adjLlCat$ and $\adjLCat$, respectively,
but using right (loose) morphisms instead of left ones.
\end{definition}

In other words, $\adjRCat^{\cong}$ and $\adjLlCat$ are the categories of vertical arrows and pseudo squares
of the two double categories of adjunctions defined in~\cite[pg.~86]{KellyStreetReview2cats},
together with the 2-cells defined above.
Actually, Kelly and Street work in a (suitable) 2-category $\mathbf{X}$,
and define adjunctions and morphisms of them internally to $\mathbf{X}$.
From this more general perspective, it is possible to observe that
$\adjRCat(\mathbf{X}) = \adjLCat(\mathbf{X}\cop)\cop$ and, dually,
$\adjLCat(\mathbf{X}) = \adjRCat(\mathbf{X}\cop)\cop$.
The same holds of course for $\adjLlCat(\mathbf{X})$ and $\adjRCat^{\cong}(\mathbf{X})$
(and for the 2-categories whose morphisms are the squares of Kelly and Street's double categories).

\subsection{The 2-reflection between comonads and adjunctions}
\label{sec:cmdadj-refl}

Let $\mathbf{X}$ be a 2-category that admits the construction of algebras.
The 2-category of formal monads on some object $X$ in $\mathbf{X}$
is 2-reflective in a suitable full sub-2-category of $\mathbf{X}/X$:
this is the content of the (internal) structure-semantics adjunction~\cite[Theorem~6]{street72formnd},
see~\cite{Dubuc1970} for the enriched version.
The construction of algebras for a monad $t$ in $\mathbf{X}$ provides a ``forgetful'' 1-cell $X^t \to X$,
where $X^t$ is the object of algebras:
this is the semantic functor.
The subcategory of $\mathbf{X}/X$ consists of those 1-cells $A \to X$, called \emph{tractable},
that induce a monad on $X$ (in a suitably universal way):
this is the structure functor.
The reflection is based on the observation that,
given a 1-cell $f \colon A \to X$ and a monad $t \colon X \to X$,
1-cells $g \colon A \to X^t$ over $X$
are in bijection with algebra structures on $f$,
that is, 2-cells $\psi \colon tf \Rightarrow f$
making the usual diagrams involving unit and multiplication commute
(this is obvious when $\mathbf{X}=\Cat$,
and follows from the universal property of $X^t$ in general).
But then the pair $(f,\psi)$ is precisely a (lax) morphism of monads from the identity monad on $A$ to $t$.
The definition of tractable functor ensures that
these are in bijection with (lax) morphisms of monads from the monad induced by $f$,
called \emph{codensity monad}, to $t$.\footnote{%
In Dubuc the adjunction involves the opposite of the category of monads over a fixed category $\ctg{B}$:
this is because his morphisms of monads are the \emph{oplax} ones, instead of lax ones.
Over $\ctg{B}$ it is enough to take the opposite since
$(\catof{Mnd}_{\ctg{B}}^{\mathrm{oplax}})\opp = \catof{Mnd}_{\ctg{B}}^{\mathrm{lax}}$.
However, this is no longer true if we do not work over a fixed base.
}
In practice, tractable functors can be defined via right Kan extensions~\cite{Dubuc1970},
or via cocartesian lifts~\cite{street72formnd}, but we do not need the precise definition.
For us, it is enough to observe that right adjoint functors are tractable:
in this case the codensity monad is the monad induced by the adjunction.
More precisely,
let us consider the 2-category $\catof{Radj}(\mathbf{X})$,
which is defined as $\adjRCat$ in~\eqref{def:adjCat} but internally to $\mathbf{X}$.
Its sub-2-category $\catof{Radj}(\mathbf{X})_X$ on the adjunction whose right adjoint has codomain $X$ (and both 1- and 2-cells are identities) embeds fully into the full sub-2-category of $\mathbf{X}/X$ on the tractable functors.
Since the semantics functor clearly lands in $\catof{Radj}(\mathbf{X})_X$,
the 2-reflection restricts between $\catof{Mnd}(\mathbf{X})_X$ and $\catof{Radj}(\mathbf{X})_X$.
Moreover, the family of 2-reflections extends to a 2-reflection between the global 2-categories of monads and adjunctions over the 2-category of arrows of $\mathbf{X}$, as shown in~\eqref{mndadj-refl} below.

\begin{equation}\label{mndadj-refl}
\begin{tikzcd}
\catof{Mnd}(\mathbf{X})	\ar[dr,bend right=1em,hook,"\mathrm{EM}"']
	\ar[rr,shift right=1.3ex,hook,"{\mathrm{EM}}"',""{name=right}]
&&	\catof{Radj}(\mathbf{X})	\ar[dl,bend left=1em,hook']
	\ar[ll,shift right=1.3ex,"{\mathrm{M}}"',""{name=left}]
\\
&	\mathbf{X^2}	&	\ar[from=right,to=left,phantom,"\adj{-90}"{description}]
\end{tikzcd}
\end{equation}

By taking $\mathbf{X} = \Cat^{\mathrm{co}}$ and  recalling that
$\cmdCat = \catof{Mnd}(\Cat\cop)\cop$ and $\adjLCat = \adjRCat(\Cat\cop)\cop$,
we see that the 2-category $\cmdCat$ is a 2-reflective sub-2-category of $\adjLCat$.
It is also straightforward to verify that the reflection restricts to the sub-2-categories of pseudo and strict morphisms.
We record this fact in the theorem below.

\begin{theorem}\label{thm:cmd-adj}
There is a 2-reflection
\[\begin{tikzcd}[column sep=6em]
\cmdCat	\ar[r,shift right=1ex,hook,"\cmdtoadj"{swap}] \ar[r,phantom,shift left=.3ex,"\adj{-90}"{description}]
&	\adjLCat	\ar[l,shift right=1.5ex,"\adjtocmd"{swap}]
\end{tikzcd}\]
such that the counit is the identity $\adjtocmd \circ \cmdtoadj = \Id_{\cmdCat}$.
In particular,
the right adjoint $\cmdtoadj$ is injective on objects and fully faithful.

The 2-reflection restricts between the wide 2-full sub-2-categories on the pseudo and strict morphisms.
\end{theorem}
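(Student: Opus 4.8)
\emph{Approach.} The plan is to deduce the statement from the internal structure--semantics adjunction recalled in~\eqref{mndadj-refl} by specialising $\mathbf{X}$ to $\Cat\cop$ and then applying $(-)\cop$ to the whole $2$-reflection. Concretely, the right adjoint $\cmdtoadj$ sends a comonad $(K,\epsilon,\nu)$ on $\mathcal{C}$ to its Eilenberg--Moore adjunction $\emfrg{K}\dashv\emrgt{K}$, regarded as a $0$-cell of $\adjLCat$ with left adjoint the forgetful functor $\emfrg{K}\colon\coal(K)\to\mathcal{C}$; on $1$-cells it is the assignment of~\eqref{lifttocoal}, sending a lax morphism $(H,\theta)$ to $(H,\coal(H,\theta))$. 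Since $\coal(H,\theta)$ lifts $H$ along the forgetful functors, the comparison $2$-cell $\zeta$ is the identity, so this is a genuine left morphism of adjunctions whose only nontrivial datum is the mate $\zeta^{\#}=\hat\theta$, which satisfies $\emfrg{K'}\hat\theta=\theta$. The left adjoint $\adjtocmd$ sends an adjunction to the comonad it induces on the codomain of its left adjoint.

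\emph{The core reflection.} First I would instantiate~\eqref{mndadj-refl} at $\mathbf{X}=\Cat\cop$, obtaining a $2$-reflection $\adjtomnd\dashv\mndtoadj$ of $\catof{Mnd}(\Cat\cop)$ into $\catof{Radj}(\Cat\cop)$ with $\mndtoadj$ fully faithful and counit the strict identity $\adjtomnd\mndtoadj=\Id$. Applying $(-)\cop$ to the two $2$-functors and to the defining hom-category isomorphism $\catof{Mnd}(\Cat\cop)(\adjtomnd a,t)\cong\catof{Radj}(\Cat\cop)(a,\mndtoadj t)$, and using that $(-)\cop$ merely reverses $2$-cells and hence replaces each hom-category by its opposite, yields a $2$-natural isomorphism $\cmdCat(\adjtocmd a,k)\cong\adjLCat(a,\cmdtoadj k)$ once we invoke the identifications $\cmdCat=\catof{Mnd}(\Cat\cop)\cop$ and $\adjLCat=\catof{Radj}(\Cat\cop)\cop$ established in~\eqref{def:cmdCat} and~\eqref{def:adjCat}. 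This is exactly the $2$-adjunction $\adjtocmd\dashv\cmdtoadj$. Full faithfulness of $\mndtoadj$ is the invertibility of a functor between hom-categories, a property stable under $(-)\opp$, so $\cmdtoadj$ is fully faithful as well; and the counit is still the identity because $\adjtocmd\cmdtoadj=(\adjtomnd\mndtoadj)\cop=\Id\cop=\Id_{\cmdCat}$, which concretely is the classical fact that the comonad induced by the Eilenberg--Moore adjunction of $K$ is again $K$. Injectivity on objects of $\cmdtoadj$ then follows formally from $\adjtocmd\cmdtoadj=\Id$.

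\emph{Restriction to pseudo and strict morphisms.} This is the only part requiring a genuine check, and it rests on the mate dictionary between $\theta$ and $\zeta^{\#}$. Through $\cmdtoadj$ a morphism $(H,\theta)$ produces $\zeta=\id$ and $\zeta^{\#}=\hat\theta$ with $\emfrg{K'}\hat\theta=\theta$; since the forgetful functor $\emfrg{K'}$ is faithful and reflects isomorphisms, $\theta$ is invertible (respectively the identity) if and only if $\hat\theta=\zeta^{\#}$ is, so $\cmdtoadj$ carries pseudo to pseudo and strict to strict. In the other direction $\adjtocmd$ sends a left morphism $(F,G,\id)$ to the comonad morphism with structure $2$-cell $\theta=L'\zeta^{\#}$, the compatibility with counit and comultiplication being precisely the squares of~(\ref{rem:adjmorph}.\ref{rem:adjmorph:onecell}); hence $\zeta^{\#}$ invertible (respectively the identity) forces $\theta$ to be so. As the $2$-cells of the four sub-$2$-categories are inherited from $\cmdCat$ and $\adjLCat$ and the unit and counit are left unchanged, the restricted data again form $2$-reflections between $\cmdpsCat$ and $\adjpsLCat$ and between $\cmdstrCat$ and $\adjstrLCat$.

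\emph{Main obstacle.} I expect the real difficulty to be organisational rather than computational: one must keep three layers of duality apart---the inner $\Cat\cop$, the outer $(-)\cop$, and the transposition that trades $\theta$ for its mate $\zeta^{\#}$---and verify that \quotat{pseudo} and \quotat{strict}, defined on $\theta$ on the comonad side but on the mate on the adjunction side, match under the Eilenberg--Moore correspondence. Once the identity $\emfrg{K'}\hat\theta=\theta$ and the faithfulness of the forgetful functor are available, no hard computation is left.
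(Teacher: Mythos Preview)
Your proposal is correct and follows essentially the same route as the paper: instantiate the structure--semantics $2$-reflection~\eqref{mndadj-refl} at $\mathbf{X}=\Cat\cop$, apply $(-)\cop$ using the identifications $\cmdCat=\catof{Mnd}(\Cat\cop)\cop$ and $\adjLCat=\catof{Radj}(\Cat\cop)\cop$, and then check the restriction to pseudo and strict morphisms via the mate correspondence. The paper makes the last step slightly more explicit by computing directly that $\emfrg{K'}\id^\# = \theta$ (so the mate of $\id$ under $\cmdtoadj$ is literally $\theta$ as a coalgebra map) and that $\adjtocmd(F,G)=(F,L'\id^\#)$, which is exactly your $\theta=L'\zeta^\#$; your appeal to faithfulness of $\emfrg{K'}$ to transfer invertibility and identity between $\theta$ and $\hat\theta$ is the same observation phrased differently.
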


In fact, the 2-reflection in Theorem~\ref{thm:cmd-adj}
can be extended to a bireflection involving the 2-category
whose 1-cells are left loose morphisms of adjunctions.
This is not hard to see,
but we find it convenient to first recall some details of the proof of~\eqref{thm:cmd-adj}.
These details will also be helpful in clarifying the proof of our main result in~\eqref{sec:bieqv}.

\subsubsection{The right adjoint $\cmdtoadj$}
\label{sec:cmd-adj:ra}

The 2-functor $\cmdtoadj$ maps a comonad $(K,\epsilon,\nu)$
to the Eilenberg--Moore adjunction of coalgebras~\cite[VI.3]{CWM}:
\[\begin{tikzcd}[column sep=5em]
\coal(K)	\ar[r,shift right=1ex,"{\emfrg{K}}"{swap}] \ar[r,phantom,shift left=.3ex,"\adj{90}"{description}]
&	\mathcal{C}	\ar[l,shift right=1.5ex,"{\emrgt{K}}"{swap}]
\end{tikzcd}\]
whose counit is $\epsilon \colon \emfrg{K} \emrgt{K} =K \Rightarrow \Id_{\mathcal{C}}$
and whose unit $\eta^K \colon \Id_{\coal(K)} \Rightarrow \emrgt{K}\emfrg{K}$
has component at a coalgebra $(A,a \colon A \to KA)$
the arrow $a$ itself seen as a morphism of coalgebras $(A,a) \to (KA,\nu_A)$.

A lax morphism of comonads $(H,\theta) \colon K \to K'$
induces a functor $\coal(H,\theta)$ from $\coal(K)$ to $\coal(K')$
which maps a $K$-coalgebra $(A,a)$ to the $K'$-coalgebra $(HA,\theta_A \circ Ha)$.
Clearly, $\emfrg{K'} \coal(H,\theta) = H \emfrg{K}$.
Therefore the pair $(H,\coal(H,\theta))$ is a left morphism of adjunctions,
which gives the action of the 2-functor $\cmdtoadj$ on 1-cells.

Finally, it is easy to see that every 2-cell $\phi$ in $\cmdCat$ lifts to a natural transformation
$\coal(\phi) \colon \coal(H_1,\theta_1) \Rightarrow \coal(H_2,\theta_2)$ whose component at $(A,a)$ is $\phi_A$ itself.
Therefore $(\phi, \coal(\phi))$ is a 2-cell in $\adjLCat$,
which gives the action of $\cmdtoadj$ on 2-cells.

It is straightforward to verify that the mate of
$\id \colon \emfrg{K'}\coal(H,\theta) = H\emfrg{K}$ is $\theta$ itself.
It follows that the functor $\cmdtoadj$ restricts to the sub-2-categories on pseudo and strict morphisms.

\subsubsection{The 2-reflector $\adjtocmd$}
\label{sec:cmd-adj:la}

The 2-functor $\adjtocmd$ maps an adjunction $(L,R,\eta,\epsilon)$ to the comonad $(LR,\epsilon,L\eta R)$.

A left morphism of adjunctions $(F,G)$ induces a lax morphism of comonads
$\adjtocmd(F,G) = (F,L'\id^\#)$,
as we will see in~\eqref{ext-loose}.
It is then clear that $\adjtocmd$ restricts to the sub-2-categories on pseudo and strict morphisms.

A 2-cell $(\phi,\psi)$ in $\adjLCat$ is simply mapped to $\phi$.
A proof that this gives a 2-cell in $\cmdCat$ is in~\eqref{ext-loose}.

\subsubsection{The counit}
\label{sec:cmd-adj:counit}

We have
\[
\adjtocmd \circ \cmdtoadj = \Id.
\]
On objects, this follows from
$\emfrg{K}\emrgt{K} = K$ and $\emfrg{K}\eta\emrgt{K} = \nu$.
To see that $\adjtocmd \circ \cmdtoadj(H,\theta) = (H,\theta)$
for a lax morphism of comonads $(H,\theta)$,
recall that $\eta'_{(A,a)} = a$ and use the two diagrams in~\eqref{def:cmdCat}
to show that $\emfrg{K'}\id^\#_E = K'H\epsilon_E \circ \emfrg{K'}(\theta_{KE}\circ H\nu_E)$ equals $\theta_E$.
Finally, both functors act identically on 2-cells.

\subsubsection{The unit}
\label{sec:cmd-adj:unit}

Every adjunction $(L,R,\eta,\epsilon)$ gives rise
to a canonical comparison functor $\emcmp{L,R}$ making the diagram below commute.
\[\begin{tikzcd}
\mathcal{D}	\ar[dr,"L"{swap}] \ar[rr,"{\emcmp{L,R}}"]
&&	\coal(LR)	\ar[dl,"\emfrg{LR}"]
\\
&	\mathcal{C}	&
\end{tikzcd}\]
Recall that $\emcmp{L,R}$ maps an object $A$ to the coalgebra
$L\eta_A \colon LA \to LRLA$.

The unit $\unitadjcmd$ of the 2-adjunction $\adjtocmd \dashv \cmdtoadj$ at $(L,R,\eta,\epsilon)$
is defined as the strict left morphism of adjunctions
$(\Id,\emcmp{L,R}) \colon (L,R,\eta,\epsilon) \to (\emfrg{LR},\emrgt{LR},\eta^{LR},\epsilon)$.
This family is natural in $(L,R,\eta,\epsilon)$ since,
for every left morphism  of adjunctions $(F,G)$,
\begin{align*}
L'\id^\#_{LA} \circ FL\eta_A
&= L'R'F\epsilon_{LA} \circ L'\eta'_{GRLA} \circ L'G\eta_A
\\&= L'R'F\epsilon_{LA} \circ L'R'L'G\eta_A \circ L'\eta'_{GA}
\\&= L'\eta'_{GA}
\end{align*}
and $FLf=L'Gf$ for $A$ and $f$ in $\mathcal{D}$, imply
$\coal(\adjtocmd(F,G,\zeta)) \circ \emcmp{L,R} = \emcmp{L',R'} \circ G$.
Note that this proof heavily relies on $L'G=FL$.

\subsubsection{The trianguar identities}
\label{sec:cmd-adj:ti}

The two equations below hold.
\begin{equation}\label{eq:cmd-adj:triangid}
\adjtocmd\unitadjcmd = \idd_{\adjtocmd}
\hspace{6em}
\unitadjcmd\cmdtoadj = \idd_{\cmdtoadj}
\end{equation}
The left-hand one does since
the mate $\id^\# \colon \emcmp{L,R}R \Rightarrow \emrgt{LR}$ of
$\id \colon \emfrg{LR}\emcmp{L,R} \Rightarrow L$
is itself an identity.
The right-hand one does since
$\emcmp{\emfrg{K},\emrgt{K}} = \Id_{\coal(K)}$

Now we turn to the case of left loose morphisms of adjunctions.

\subsection{The bireflection for loose morphisms}
\label{sec:cmdadj-refl-loose}

\begin{lemma}\label{lem:extadjps}
Let $(F,G,\zeta) \colon (L,R,\eta,\epsilon) \to (L',R',\eta',\epsilon')$
be a left loose morphism of adjunctions.
Then the following facts hold.
\begin{enumerate}
\item\label{lem:extadjps:onecell}
The two diagrams below commute.
\[\begin{tikzcd}[row sep=2em,column sep=2em]
FLR	\ar[dr,Rightarrow,"F\epsilon"{swap}]
	\ar[rr,Rightarrow,"{L'\zeta^\# \circ \zeta^{-1}R}"]
&&	L'R'F	\ar[dl,Rightarrow,"\epsilon'F"]
\\
&	F	&
\end{tikzcd}
\hspace{3ex}
\begin{tikzcd}[row sep=2ex,column sep=2ex]
FLR	\ar[d,Rightarrow,"FL\eta R"{swap}]
	\ar[rr,Rightarrow,"{L'\zeta^\# \circ \zeta^{-1}}R"]
&&	L'R'F	\ar[d,Rightarrow,"L'\eta'R'F"]
\\[1ex]
F(LR)^2	\ar[dr,Rightarrow,"{(L'\zeta^\# \circ \zeta^{-1}R)LR}"{swap}]
&&	(L'R')^2F
\\
&	L'R'FLR	\ar[ur,Rightarrow,"{L'R'(L'\zeta^\# \circ \zeta^{-1}R)}"{swap}]
&\end{tikzcd}\]
\item\label{lem:extadjps:unitpsnat}
\[\begin{tikzcd}[column sep=3em,row sep=6ex]
L'G	\ar[d,Rightarrow,"{L'\eta'G}"'] \ar[r,Rightarrow,"{\zeta}"]
&	FL	\ar[d,Rightarrow,"{L'\zeta^\#L \circ \zeta^{-1}RL \circ FL\eta}"{description}]
\\
L'R'L'G	\ar[r,Rightarrow,"{L'R'\zeta}"']
&	L'R'FL
\end{tikzcd}\]
\end{enumerate}
\end{lemma}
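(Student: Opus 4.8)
Write $\theta := L'\zeta^\# \circ \zeta^{-1}R \colon FLR \Rightarrow L'R'F$ for the transformation appearing as the common top edge of both squares in part~(\ref{lem:extadjps}.\ref{lem:extadjps:onecell}); when $\zeta=\id$ this is exactly the structure $L'\id^\#$ recorded earlier for left morphisms. The two squares of~(\ref{lem:extadjps}.\ref{lem:extadjps:onecell}) are then precisely the counit and comultiplication axioms of Definition~\ref{def:cmdCat} asserting that $(F,\theta)$ is a lax morphism from the induced comonad $(LR,\epsilon,L\eta R)$ to $(L'R',\epsilon',L'\eta'R')$, so the plan is to verify these two axioms together with the auxiliary square of part~(\ref{lem:extadjps}.\ref{lem:extadjps:unitpsnat}), reducing everything to the two squares of~(\ref{rem:adjmorph}.\ref{rem:adjmorph:onecell}), naturality of $\zeta$ and of $\eta'$, and invertibility of $\zeta$. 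The counit square is immediate: the right-hand square of~(\ref{rem:adjmorph}.\ref{rem:adjmorph:onecell}) reads $F\epsilon \circ \zeta R = \epsilon'F \circ L'\zeta^\#$, and precomposing with $\zeta^{-1}R$, so that $\zeta R \circ \zeta^{-1}R = \id$, gives $\epsilon'F \circ \theta = F\epsilon$.

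Next I would establish the auxiliary square of part~(\ref{lem:extadjps}.\ref{lem:extadjps:unitpsnat}), as it feeds the comultiplication axiom. Whiskering the left-hand square of~(\ref{rem:adjmorph}.\ref{rem:adjmorph:onecell}), namely $R'\zeta \circ \eta'G = \zeta^\#L \circ G\eta$, by $L'$ on the left rewrites its down-then-right path as $L'\zeta^\#L \circ L'G\eta$. It then remains to match this with the right-then-down path $\theta L \circ FL\eta \circ \zeta$: naturality of $\zeta$ at the components of $\eta$ gives $FL\eta \circ \zeta = \zeta RL \circ L'G\eta$, and substituting this and using $\zeta^{-1}RL \circ \zeta RL = \id$ collapses the path to $L'\zeta^\#L \circ L'G\eta$, as needed.

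For the comultiplication square I would exploit the square just proved. Whiskering it by $R$ and postcomposing the whiskered identity with $\zeta^{-1}R$ rewrites the lower path $\theta LR \circ F(L\eta R)$ as $L'R'\zeta R \circ L'\eta'GR \circ \zeta^{-1}R$. Substituting $\theta = L'\zeta^\# \circ \zeta^{-1}R$ into the remaining factor $L'R'\theta$ and cancelling $\zeta^{-1}R \circ \zeta R = \id$ reduces the entire right-hand path of the square to $L'R'L'\zeta^\# \circ L'\eta'GR \circ \zeta^{-1}R$. Finally, naturality of $\eta'$ at $\zeta^\#$, whiskered by $L'$, turns $L'R'L'\zeta^\# \circ L'\eta'GR$ into $L'\eta'R'F \circ L'\zeta^\#$, so the right-hand path becomes $L'\eta'R'F \circ \theta$, which is exactly the left-hand path $\nu'F \circ \theta$.

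I expect the comultiplication square to be the only delicate point. Everything is formal mate calculus with no genuine content, but the bookkeeping stays manageable only because routing through part~(\ref{lem:extadjps}.\ref{lem:extadjps:unitpsnat}) keeps $\zeta$ and $\zeta^{-1}$ paired, so that they cancel two at a time. Expanding $\zeta^\# = (R'F\epsilon)(R'\zeta R)(\eta'GR)$ into its threefold definition and attacking the square directly also works, but then several triangle identities must be invoked by hand and the order of cancellations is easy to get wrong; avoiding that is the reason for proving the auxiliary square first.
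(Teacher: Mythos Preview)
Your proof is correct and follows essentially the same route as the paper: both reduce everything to the two squares of~(\ref{rem:adjmorph}.\ref{rem:adjmorph:onecell}), naturality of $\zeta$ and $\eta'$, and cancellation of $\zeta$ against $\zeta^{-1}$. The only difference is organisational: you prove part~(\ref{lem:extadjps}.\ref{lem:extadjps:unitpsnat}) first and invoke its $R$-whiskering inside the comultiplication computation, whereas the paper runs the comultiplication chain directly (inlining exactly that computation) and verifies part~(\ref{lem:extadjps}.\ref{lem:extadjps:unitpsnat}) separately afterwards.
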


\begin{proof}
Using (\ref{rem:adjmorph}.\ref{rem:adjmorph:onecell})
and naturality of the arrows involved. 

\ref{lem:extadjps:onecell}.
\[
\epsilon'F \circ L'\zeta^\# \circ \zeta^{-1}R
= F\epsilon \circ \zeta R \circ \zeta^{-1}R
= F\epsilon
\]
\begin{align*}
L'R'(L'\zeta^\# \circ \zeta^{-1} R) \circ L'\zeta^\#LR \circ
& \zeta^{-1}(RL)R \circ (FL)\eta R
\\&=
L'R'(L'\zeta^\# \circ \zeta^{-1} R) \circ L'(\zeta^\#L)R \circ L'(G\eta)R \circ \zeta^{-1}R
\\&=
L'R'(L'\zeta^\# \circ \zeta^{-1} R) \circ L'R'\zeta R \circ L'\eta'GR \circ \zeta^{-1}R
\\&=
L'(R'L')\zeta^\# \circ L'\eta'(GR) \circ \zeta^{-1}R
\\&=
L'\eta'R'F \circ L'\zeta^\# \circ \zeta^{-1}R
\end{align*}

\ref{lem:extadjps:unitpsnat}.
$L'\zeta^\#L \circ \zeta^{-1}RL \circ FL\eta \circ \zeta
= L'\zeta^\#L \circ L'G\eta
= L'R'\zeta \circ L'\eta'G.$
\end{proof}

\begin{corollary}\label{ext-loose}
The 2-functor $\adjtocmd$ extends along $\adjLCat \hookrightarrow \adjLlCat$
to a 2-functor $\adjlootocmd \colon \adjLlCat \to \cmdCat$ by defining
\begin{equation}\label{ext-loose-fnct}
\adjlootocmd(F,G,\zeta) := (F,L'\zeta^\# \circ \zeta^{-1}R)
\end{equation}
on 1-cells $(F,G,\zeta)\colon(L,R,\eta,\epsilon) \to (L',R',\eta',\epsilon')$.

This functor restricts between the sub-2-categories on pseudo morphisms.
Note that $\adjlootocmd$ restricted to $\adjstrLlCat$ still lands in $\cmdpsCat$.
\end{corollary}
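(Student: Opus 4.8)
The plan is to handle the object, 1-cell, and 2-cell actions separately, noting that on objects $\adjlootocmd$ must agree with $\adjtocmd$---sending $(L,R,\eta,\epsilon)$ to $(LR,\epsilon,L\eta R)$---so that all the new content sits in the 1-cell formula. First I would verify that $\adjlootocmd(F,G,\zeta) = (F, L'\zeta^\# \circ \zeta^{-1}R)$ really is a lax morphism of comonads from $(LR,\epsilon,L\eta R)$ to $(L'R',\epsilon',L'\eta'R')$. This is immediate from (\ref{lem:extadjps}.\ref{lem:extadjps:onecell}): reading that statement with $\theta = L'\zeta^\# \circ \zeta^{-1}R$, its left-hand triangle is exactly the counit axiom and its right-hand pentagon the comultiplication axiom of Definition~\ref{def:cmdCat}. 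That $\adjlootocmd$ extends $\adjtocmd$ along $\adjLCat \hookrightarrow \adjLlCat$ is then clear, since a left morphism of adjunctions has $\zeta = \id$, whence $\zeta^{-1}R = \id$ and the formula reduces to $(F, L'\id^\#) = \adjtocmd(F,G)$, as recorded in Section~\ref{sec:cmd-adj:la}.

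For 2-functoriality on 1-cells, the identity is trivial ($\zeta = \id$ and $\id^\# = \id$ give $\theta = \id$), and the real work is preservation of composition. For a composable pair I would compute $\adjlootocmd$ of the composite 1-cell $(F_2F_1, G_2G_1, (F_2\zeta_1)(\zeta_2 G_1))$, using the mate-of-composite identity (\ref{rem:adjmorph}.\ref{rem:adjmorph:matecmp}) to rewrite its mate as $\zeta_2^\# F_1 \circ G_2\zeta_1^\#$, and compare it with the comonad composite $(\theta_2 F_1)(F_2\theta_1)$ prescribed by Definition~\ref{def:cmdCat}, writing $\theta_i$ for the second component of $\adjlootocmd(F_i,G_i,\zeta_i)$ and $L''$ for the left adjoint of the third adjunction. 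After expanding both sides and cancelling the matching outer whiskerings $L''\zeta_2^\# F_1$ and $F_2\zeta_1^{-1}R$, the identity reduces to $L''G_2\zeta_1^\# \circ \zeta_2^{-1}G_1 R = \zeta_2^{-1}R'F_1 \circ F_2L'\zeta_1^\#$, which is precisely the interchange law for the horizontal composite of $\zeta_2^{-1}\colon F_2L' \Rightarrow L''G_2$ with $\zeta_1^\#\colon G_1R \Rightarrow R'F_1$. I expect this whiskering bookkeeping to be the only genuinely fiddly point; the difficulty is clerical---keeping the three adjunctions and their functors straight---rather than conceptual.

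On 2-cells I would send $(\phi,\psi)$ to $\phi$, just as $\adjtocmd$ does, and check the comonad 2-cell condition $(L'R'\phi)\theta_1 = \theta_2(\phi LR)$. Substituting $\theta_i = L'\zeta_i^\# \circ \zeta_i^{-1}R$, rewriting $R'\phi \circ \zeta_1^\#$ as $\zeta_2^\# \circ \psi R$ via the right-hand square of (\ref{rem:adjmorph}.\ref{rem:adjmorph:twocell}), and using the left-hand square in the form $L'\psi \circ \zeta_1^{-1} = \zeta_2^{-1}\circ \phi L$, makes the two sides coincide after whiskering by $R$; preservation of identities and of composites of 2-cells is then automatic, since $\adjlootocmd$ merely projects onto the $F$-component. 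Finally, for the restriction claims I would observe that $\zeta$, and hence $\zeta^{-1}R$, is always invertible, so that if $\zeta^\#$ is invertible (the pseudo case) then $\theta = L'\zeta^\# \circ \zeta^{-1}R$ is a composite of isos and $\adjlootocmd$ lands in $\cmdpsCat$; whereas for a strict left loose morphism $\zeta^\# = \id$ forces $\theta = \zeta^{-1}R$, invertible but in general not an identity, which is exactly why $\adjstrLlCat$ is carried into $\cmdpsCat$ and not $\cmdstrCat$.
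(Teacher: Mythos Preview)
Your proposal is correct and follows essentially the same approach as the paper: both invoke (\ref{lem:extadjps}.\ref{lem:extadjps:onecell}) for the comonad-morphism axioms, (\ref{rem:adjmorph}.\ref{rem:adjmorph:matecmp}) for functoriality on 1-cells, and both squares of (\ref{rem:adjmorph}.\ref{rem:adjmorph:twocell}) for the 2-cell condition, with the restriction claims handled by inspection of the formula. Your account is simply more explicit, in particular spelling out the interchange-law step that the paper compresses into the single phrase ``Functoriality follows from~(\ref{rem:adjmorph}.\ref{rem:adjmorph:matecmp}).''
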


\begin{proof}
We only need to consider 1-cells and 2-cells.
Given a 1-cell $(F,G,\zeta)\colon$ $ (L,R,\eta,\epsilon) \to (L',R',\eta',\epsilon')$,
the diagrams in~(\ref{lem:extadjps}.\ref{lem:extadjps:onecell})
ensure that
$(F,\theta)$ is a lax morphism of comonads
$\adjtocmd(L,R,\eta,\epsilon) \to \adjtocmd(L',R',\eta',\epsilon')$,
where $\theta := L'\zeta^\# \circ \zeta^{-1}R$.
Functoriality follows from~(\ref{rem:adjmorph}.\ref{rem:adjmorph:matecmp}).

It is also clear that $(F,\theta)$ is pseudo whenever $(F,G,\zeta)$ is.
However, the image of a strict left loose morphism $(F,G,\zeta)$ is strict
if and only if $(F,G,\zeta)$ is in fact a strict left morphism.

Given a 2-cell $(\phi,\psi)\colon(F_1,G_1,\zeta_1) \to (F_2,G_2,\zeta_2)$,
we have
\[
L'R'\phi \circ L'\zeta_1^\# \circ \zeta_1^{-1}R
= L'\zeta_2^\# \circ L'\psi R \circ \zeta_1^{-1}R
= \L'\zeta_2^\# \circ \zeta_2^{-1}R \circ \phi LR.
\]
by~(\ref{rem:adjmorph}.\ref{rem:adjmorph:twocell}).
It follows that $\phi$ is a 2-cell
$\jdttpstowccmd(F_1,G_1,\zeta_1) \to \jdttpstowccmd(F_2,G_2,\zeta_2)$.
\end{proof}

\begin{remark}\label{rem:liftunit}
Consider a left loose morphism of adjunctions
$(F,G,\zeta) \colon (L,R,\eta,\epsilon) \to (L',R',\eta',\epsilon')$.
Then (\ref{lem:extadjps}.\ref{lem:extadjps:unitpsnat}) entails that
the natural iso $\zeta \colon L'G \natiso FL$
lifts to a natural iso
\[
\hat{\zeta} \colon \emcmp{L',R'} \circ G \natiso \coal(\adjlootocmd(F,G,\zeta)) \circ \emcmp{L,R}
\]
meaning that $\emfrg{L'R'}\hat{\zeta} = \zeta$.
\end{remark}

\begin{theorem}\label{thm:cmd-adjloo}
The 2-reflection from \ref{thm:cmd-adj} extends along
\hbox{$\adjLCat \hookrightarrow \adjLlCat$} to a bireflection
\[\begin{tikzcd}[column sep=6em]
\cmdCat	\ar[r,shift right=1ex,hook,"\cmdtoadjloo"{swap}] \ar[r,phantom,shift left=.3ex,"\adj{-90}"{description}]
&	\adjLlCat	\ar[l,shift right=1.5ex,"\adjlootocmd"{swap}]
\end{tikzcd}\]
such that the counit is the identity
$\adjlootocmd \circ \cmdtoadjloo = \Id_{\cmdCat}$.
In particular,
the right adjoint $\cmdtoadjloo$ is injective on objects and fully faithful.

The biadjunction restricts between the wide 2-full sub-2-categories on pseudo morphisms.
\end{theorem}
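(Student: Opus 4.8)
The plan is to build the bireflection directly from the components already in hand, so that passing from strict to loose morphisms weakens only the \emph{naturality} of the unit. First I would set $\cmdtoadjloo := \cmdtoadj$ followed by the inclusion $\adjLCat \hookrightarrow \adjLlCat$; this is injective on objects since a comonad is recovered from its Eilenberg--Moore adjunction as $K = \emfrg{K}\emrgt{K}$. For the counit I would note that $\cmdtoadjloo$ lands in strict loose morphisms, those with $\zeta = \id$, and that on these the reflector of Corollary~\ref{ext-loose} computes $L'\id^\# \circ \id^{-1}R = L'\id^\#$, the value of $\adjtocmd$. Hence $\adjlootocmd \circ \cmdtoadjloo = \adjtocmd \circ \cmdtoadj = \Id_{\cmdCat}$, reproducing the identity counit of Theorem~\ref{thm:cmd-adj}.

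The substance of the argument is the unit. I would keep the strict components $\unitadjloocmd_{(L,R)} := (\Id, \emcmp{L,R})$, still a strict left morphism into the Eilenberg--Moore adjunction of $LR$. Its naturality, however, can no longer be strict: for a loose morphism $(F,G,\zeta)$ the two legs of the naturality square share the base functor $F$, but their functors on coalgebras are $\emcmp{L',R'}\,G$ and $\coal(\adjlootocmd(F,G,\zeta)) \circ \emcmp{L,R}$, which agree only up to the lifted iso $\hat\zeta$ of Remark~\ref{rem:liftunit}. I would declare the pseudo-naturality $2$-cell to be $(\id_F, \hat\zeta)$ and verify that it is a genuine invertible $2$-cell of $\adjLlCat$ by the criterion of~(\ref{rem:adjmorph}.\ref{rem:adjmorph:twocell}); this reduces exactly to the identity $\emfrg{L'R'}\hat\zeta = \zeta$ recorded in Remark~\ref{rem:liftunit}. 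This is the step I expect to be the main obstacle, since the coherence of the construction is concentrated here: beyond commutativity up to $\hat\zeta$ of each individual square, one must check that $(F,G,\zeta) \mapsto (\id_F,\hat\zeta)$ respects composition of $1$-cells---using~(\ref{rem:adjmorph}.\ref{rem:adjmorph:matecmp}) and the construction of $\hat\zeta$---and of $2$-cells, so that $\unitadjloocmd$ is a bona fide pseudo-natural transformation.

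Finally, I would dispatch the triangle identities. As the unit is a strict morphism and $\adjlootocmd$ agrees with $\adjtocmd$ there, both reduce to the strict computations of~\ref{sec:cmd-adj:ti}: the mate of $\id \colon \emfrg{LR}\emcmp{L,R} = L$ is again an identity, and $\emcmp{\emfrg{K},\emrgt{K}} = \Id_{\coal(K)}$. They hold on the nose, so no further coherence intervenes and the $2$-reflection is upgraded to a biadjunction in which only the unit has become loose. Full faithfulness of $\cmdtoadjloo$ is then the usual formal consequence of the counit being the identity, exactly as in Theorem~\ref{thm:cmd-adj}. The restriction to the pseudo sub-$2$-categories follows at once: $\adjlootocmd$ restricts by Corollary~\ref{ext-loose}, $\cmdtoadj$ by Theorem~\ref{thm:cmd-adj}, and $\unitadjloocmd$, being strict, is in particular pseudo.
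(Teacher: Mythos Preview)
Your proposal is correct and follows essentially the same approach as the paper: keep the strict unit components $(\Id,\emcmp{L,R})$ and supply the pseudo-naturality 2-cell $(\id_F,\hat\zeta)$ from Remark~\ref{rem:liftunit}, reducing the 2-cell condition to $\emfrg{L'R'}\hat\zeta=\zeta$. You are in fact more explicit than the paper, which compresses the whole argument into that single step and does not spell out the counit, the triangle identities, or the coherence of $\hat\zeta$ with composition that you rightly flag.
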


\begin{proof}
It only remains to show that the unit
$\unitadjcmd \colon \Id \Rightarrow \cmdtoadjloo\circ\adjlootocmd$
lifts to a pseudo-natural transformation
$\unitadjcmd \colon \Id \Rightarrow \cmdtoadjloo \circ \adjlootocmd$.
This amounts to give,
for every left loose morphism of adjunctions
$(F,G,\zeta) \colon (L,R,\eta,\epsilon) \to(L',R',\eta',\epsilon')$,
an invertible 2-cell
$(F,\emcmp{L',R'}\circ G,\zeta) \to (F,\coal(\adjlootocmd(F,G,\zeta))\circ\emcmp{L,R},\id)$
in $\adjLlCat$.
For this 2-cell we can take $(\id_F,\hat{\zeta})$,
where $\hat{\zeta}$ is the natural iso from \ref{rem:liftunit}.
\end{proof}

\section{The 2-categories of interest}
\label{sec:defs}

All 2-categories that we shall define below will contain Grothendieck fibrations.

\begin{definition}
\label{def:fibcat}
The 2-category of fibrations $\fibCat$ is the 2-full sub-2-category
of the 2-category of arrows $\Cat^2$
on those functors which are fibrations,
and those morphisms of functors
\[\begin{tikzcd}[ampersand replacement=\&]
	\ee \& {\ee'} \\
	\bb \& {\bb',}
	\arrow["p"', from=1-1, to=2-1]
	\arrow["{p' }", from=1-2, to=2-2]
	\arrow["E", from=1-1, to=1-2]
	\arrow["B"', from=2-1, to=2-2]
\end{tikzcd}\]
such that $E$ maps cartesian arrows to cartesian arrows.
\end{definition}

The 2-cells in $\fibCat$ are the same of $\Cat^2$:
pairs of natural transformations $(\psi,\phi)$
with $\psi \colon B_1 \Rightarrow B_2$ and $\phi \colon E_1 \Rightarrow E_2$
such that $p' \phi = \psi p $.

\subsection{The 2-category of comprehension categories}

\begin{definition}[{\cite[Theorem~9.3.4]{jacobs1999categorical}}]
\label{def:compcat}
A \define{comprehension category (without terminal object)} consists of a fibration $p$
and a morphism $\chi$ of functors over $\ctg{B}$ as depicted below
\[\begin{tikzcd}
\ctg{E}	\ar[dr,"p"'] \ar[rr,"\chi"]
&&	\ctg{B}^2	\ar[dl,"\cod_{\ctg{B}}"]
\\
&	\ctg{B}	&
\end{tikzcd}\]
such that $\chi$ preserves cartesian arrows,
that is, it maps them to pullback squares in $\ctg{B}$.

When $\chi$ is full and faithful, the comprehension category is called \define{full}.
\end{definition}

Comprehension categories are usually required to have terminal objects in $\ctg{B}$.
Here we dispense with this assumption.
Note however that, in all our constructions, the fibration $p$ remains fixed,
and so does its base $\ctg{B}$.

Examples of comprehension categories abound in the literature.
Several of them can be found in~\cite{comprehensioncats,jacobs1999categorical}.
Here we only mention three classes of examples.
Lawvere's \emph{hyperdoctrines with comprehension}~\cite{LawvereF:equhcs};
the \emph{fibration of presheaves} over $\Cat$ with comprehension given by the Grothendieck construction~\cite{ehrhard};
categories $\ctg{C}$ equipped with a class of morphisms $\mathcal{D}$ closed under composition and under pullback along any arrow,
such as fibrations of subobjects, or Brown's \emph{categories with fibrant objects}~\cite{Brown1973}:
the comprehension exhibits $\mathcal{D}$ as the full subfibration of
$\cod \colon \ctg{C}^2 \to \ctg{C}$ on the arrows in $\mathcal{D}$.
A variation on the last example, given a topos, consists in taking the fibration of predicates, \ie arrows into the subobject classifier $\Omega$, instead of subobjects:
the comprehension of a predicate is the subobject it classifies.
Note that the resulting comprehension category is not full~\cite{comprehensioncats}.

\begin{definition}\label{cmpct-morph}
	Let $(p,\chi)$ and $(p',\chi')$ be comprehension categories.
	A \define{lax morphism of comprehension categories} from $(p,\chi)$ to $(p',\chi')$
	is a triple $(B,H,\zeta)$ as in the diagram below,
	such that
	\begin{enumerate}
		\item
		$(B,H)$ is a 1-cell in $\fibCat$, and
		\item
		$\cod \zeta = \ctgid{Bp}$.
	\end{enumerate}
	\[\begin{tikzcd}
		& {\mathcal{B}\due} &&& {\mathcal{B}^{'\mathsf{2}}} \\
		{\mathcal{E}} &&& {\mathcal{E}'} \\
		{\mathcal{B}} &&& {\mathcal{B}'}
		\arrow["p"', from=2-1, to=3-1]
		\arrow["\cod"{pos=0.6}, curve={height=-12pt}, from=1-2, to=3-1]
		\arrow[""{name=0, anchor=center, inner sep=0}, "\chi", from=2-1, to=1-2]
		\arrow["B", from=3-1, to=3-4]
		\arrow["H", from=2-1, to=2-4, crossing over]
		\arrow["{B\due}", from=1-2, to=1-5]
		\arrow["{p'}"', from=2-4, to=3-4]
		\arrow[""{name=1, anchor=center, inner sep=0}, "{\chi'}", from=2-4, to=1-5]
		\arrow["\cod"{pos=0.6}, curve={height=-12pt}, from=1-5, to=3-4]
		\arrow["\zeta", shorten <=29pt, shorten >=29pt, Rightarrow, from=0, to=1]
	\end{tikzcd}\]

	A lax morphism of comprehension categories $(B,H,\zeta)$ is
	a \define{pseudo} (respectively, \define{strict}) \define{morphism of comprehension categories}
	if $\zeta$ is invertible (respectively, the identity).
	
	Given two composable lax morphisms of comprehension categories $(B_1,H_1,\zeta_1)$ and $(B_2,H_2,\zeta_2)$,
	their composite is $(B_2B_1,H_2H_1,(\zeta_2H_1)(B_2^2\zeta_1))$.
	It is straightforward to verify that this composition is unital and associative.
	Pseudo and strict morphisms are clearly closed under composition.
\end{definition}

\begin{example}
Strict morphisms of comprehension categories are considered in~\cite{comprehensioncats,BlancoJ}.
Pseudo morphisms of comprehension categories are considered in~\cite{larrea2018}
\end{example}

\begin{remark}\label{cmpct-morph-triang}
The component at an object $E$ of the natural transformation
$\zeta \colon B\due \chi \Rightarrow \chi' H$
in a lax morphism of comprehension categories
consists of just one arrow,
making the triangle below commute.
\[\begin{tikzcd}
	BX_E	\ar[dr,"{B\chi_E}"{swap}] \ar[rr,"\zeta_E"]
	&&	X'_{HE}	\ar[dl,"\chi'_{HE}"]
	\\
	&	BpE = p'HE	&
\end{tikzcd}\]
\end{remark}

\begin{definition}\label{cmpct-2cats}
	The \define{2-category $\compcatCat$ of comprehension categories} is defined as follows.

	A \define{0-cell} is a comprehension category $(p,\chi)$.

	A \define{1-cell} $(p,\chi) \to (p',\chi')$
	is a lax morphism of comprehension categories~\eqref{cmpct-morph} from $(p,\chi)$ to $(p',\chi')$.

	A \define{2-cell} $(B_1,H_1,\zeta_1)\Rightarrow (B_2,H_2,\zeta_2)$
	is a 2-cell $(\psi,\phi) \colon (B_1,H_1) \Rightarrow (B_2,H_2)$ in $\fibCat$
	as in the left-hand diagram below,
	such that
	$\chi' \phi \circ \zeta_{1} = \zeta_{2} \circ \psi\due\chi$.
	Pasting diagrams, we see that 2-cells are closed under component-wise composition.
	\[\begin{tikzcd}[column sep=4em,row sep=3em]
		{\mathcal{E}} & {\mathcal{E}'} \\
		{\mathcal{B}} & {\mathcal{B}'}
		\arrow["p"', from=1-1, to=2-1]
		\arrow[""{name=0, anchor=center, inner sep=0}, "{B_1}"{description}, curve={height=-12pt}, from=2-1, to=2-2]
		\arrow[""{name=1, anchor=center, inner sep=0}, "{B_2}"{description}, curve={height=12pt}, from=2-1, to=2-2]
		\arrow[""{name=2, anchor=center, inner sep=0}, "{H_2}"{description}, curve={height=12pt}, from=1-1, to=1-2]
		\arrow[""{name=3, anchor=center, inner sep=0}, "{H_1}"{description}, curve={height=-12pt}, from=1-1, to=1-2]
		\arrow["{p'}", from=1-2, to=2-2]
		\arrow["\psi", shorten <=5pt, shorten >=5pt, Rightarrow, from=0, to=1]
		\arrow["\phi", shorten <=5pt, shorten >=5pt, Rightarrow, from=3, to=2]
	\end{tikzcd}
	\hspace{3em}
	\begin{tikzcd}[sep=3em]
		{\mathcal{B}\due} & {\mathcal{B}^{'\mathsf{2}}} \\
		{\mathcal{E}} & {\mathcal{E}'}
		\arrow["\chi", from=2-1, to=1-1]
		\arrow[""{name=0, anchor=center, inner sep=0}, "{H_1}"{description}, curve={height=-12pt}, from=2-1, to=2-2]
		\arrow[""{name=1, anchor=center, inner sep=0}, "{H_2}"{description}, curve={height=12pt}, from=2-1, to=2-2]
		\arrow[""{name=2, anchor=center, inner sep=0}, "{B_1\due}"{description}, curve={height=-12pt}, from=1-1, to=1-2]
		\arrow["{\chi'}"', from=2-2, to=1-2]
		\arrow["\phi", shorten <=5pt, shorten >=5pt, Rightarrow, from=0, to=1]
		\arrow["{\zeta_1}", shorten <=9pt, shorten >=9pt, Rightarrow, from=2, to=0]
	\end{tikzcd}
	\hspace{3em}
	\begin{tikzcd}[sep=3em]
		{\mathcal{B}\due} & {\mathcal{B}^{'\mathsf{2}}} \\
		{\mathcal{E}} & {\mathcal{E}'}
		\arrow["\chi", from=2-1, to=1-1]
		\arrow[""{name=0, anchor=center, inner sep=0}, "{H_2}"{description}, curve={height=12pt}, from=2-1, to=2-2]
		\arrow[""{name=1, anchor=center, inner sep=0}, "{B_1\due}"{description}, curve={height=-12pt}, from=1-1, to=1-2]
		\arrow["{\chi'}"', from=2-2, to=1-2]
		\arrow[""{name=2, anchor=center, inner sep=0}, "{B_2\due}"{description}, curve={height=12pt}, from=1-1, to=1-2]
		\arrow["{\zeta_2}", shorten <=9pt, shorten >=9pt, Rightarrow, from=2, to=0]
		\arrow["{\psi\due}", shorten <=5pt, shorten >=5pt, Rightarrow, from=1, to=2]
	\end{tikzcd}\]
	We write $\compcatpsCat$ and $\compcatstrCat$
	for the 2-full 2-subcategory of $\compcatCat$ with the same 0-cells
	and only those 1-cells which are pseudo (respectively, strict) morphisms of comprehension categories.
\end{definition}

It is straightforward to verify that the composition of lax morphisms of comprehension categories is unital and associative,
as it is that of 2-cells.

\begin{remark}\label{cmpcat-2cell-sq}
	Let $(B_1,H_1,\zeta^1),(B_2,H_2,\zeta^2)\colon(p,\chi)\to(p',\chi')$
	be lax morphisms of comprehension categories.
	A 2-cell
	$(\psi,\phi)\colon(B_1,H_1)\Rightarrow(B_2,H_2)$ in $\fibCat$
	is a 2-cell in $\compcatCat$ if and only if, for every $E$ in $\ctg{E}$ over $X$,
	the top square in the diagram below commutes,
	\[\begin{tikzcd}[row sep=1.5em]
		{B_1X_E} && {B_2X_E} \\
		& {X'_{H_1E}} && {X'_{H_2E}} \\
		{B_1X} && {B_2X}
		\arrow["{\chi'_{H_1E}}"{pos=0.3}, from=2-2, to=3-1]
		\arrow["{\zeta^1_E}", from=1-1, to=2-2]
		\arrow["{B_1\chi_E}"'{pos=0.8}, from=1-1, to=3-1]
		\arrow["{\psi_X}"', from=3-1, to=3-3]
		\arrow["{\psi_{X_E}}", from=1-1, to=1-3]
		\arrow["{\chi'_{H_2E}}"{pos=0.3}, from=2-4, to=3-3]
		\arrow["{\zeta^2_E}", from=1-3, to=2-4]
		\arrow["{B_2\chi_E}"'{pos=0.8}, from=1-3, to=3-3]
		\arrow[from=2-2, to=2-4, crossing over]
	\end{tikzcd}\]
	where the front square is the image under $\chi'$ of $\phi_E \colon H_1E \to H_2E$,
	the back square is naturality of $\psi$, and
	the side triangles are those from \eqref{cmpct-morph-triang}.

	If $(B_1,H_1,\zeta^1)$ and $(B_2,H_2,\zeta^2)$ are strict morphisms,
	the top square above commutes if and only if
	its horizontal arrows coincide.
	Therefore $(\psi,\phi)$ is a 2-cell between strict morphisms if and only if $\dom\chi'\phi = \psi\dom\chi$.
\end{remark}

\subsection{Weakening and contraction comonads}

Here we recall the intermediate notion,
the weakening and contraction comonads introduced by Jacobs,
that we use to compare comprehension categories and \gencwf{ies}.

\begin{definition}[{\cite[Def. 9.3.1]{jacobs1999categorical}}]\label{wccmd-def}
	Let $p \colon \ctg{E} \to \ctg{B}$ be a fibration.
	A \define{weakening and contraction comonad on $p$}, or \define{\wccmd} for short,
	is a comonad $(K,\epsilon,\nu)$ on $\ctg{E}$ such that
	\begin{enumerate}
	\item\label{wccmd-def:car}
	the counit $\epsilon$ is $p$-cartesian and,
	\item\label{wccmd-def:pb}
	for every cartesian arrow $f \colon A \car B$ in $\ctg{E}$
	the image in $\ctg{B}$ under $p$
	\[\begin{tikzcd}
			pKA	\ar[d,"pKf"{swap}] \ar[r,"{p\epsilon_A}"]
			&	pA	\ar[d,"pf"]
			\\
			pKB	\ar[r,"{p\epsilon_B}"]	&	pB
	\end{tikzcd}\]
	of the naturality square of $\epsilon$ is a pullback square.
	\end{enumerate}
	We may write $pA.A$ for $pKA$,
	and we may say \wccmd{} to mean the pair of a fibration and a \wccmd{} on it.
\end{definition}

\begin{remark}\label{wccmd-rmk}
\begin{enumerate}
\item\label{wccmd-rmk:pb'}
For every cartesian arrow $f$,
the naturality square of the counit $\epsilon$
\[\begin{tikzcd}
KA	\ar[d,"Kf"{swap}] \ar[r,"{\epsilon_A}"]
&	A	\ar[d,"f"]
\\
KB	\ar[r,"{\epsilon_B}"]	&	B
\end{tikzcd}\]
is a pullback.
This follows from the fact that, in general,
a square in $\ctg{E}$ is a pullback if
it has two parallel cartesian sides
and it is sitting over a pullback in $\ctg{B}$.
\item\label{wccmd-rmk:def}
Given a \wccmd{} $(K,\epsilon,\nu)$ on a fibration $p$,
the naturality square of $\epsilon$ for $\epsilon_A$ itself is a pullback.
It follows that the comultiplication is canonically determined by the counit $\epsilon$ via the two counitality axioms.
Thus one could equivalently define a \wccmd{} to be a copointed endofunctor which enjoys
conditions~\ref{wccmd-def:car} and~\ref{wccmd-def:pb} in \eqref{wccmd-def}.
See also~\cite[p.536]{jacobs1999categorical}.
It also follows that coalgebras for the copointed endofunctor coincide with coalgebras for the comonad.
\end{enumerate}
\end{remark}

\begin{remark}\label{wccmd-coalg}
Given any fibration, 
if a composite $gf$ is cartesian and $g$ is cartesian,
then $f$ is cartesian too.
Two immediate consequences of the fact
that the counit of a \wccmd{} is cartesian are:
\begin{enumerate}
\item
The functor $K$ in a \wccmd{} preserves cartesian arrows.
\item
If $(E,e)$ is a coalgebra for a \wccmd,
then $e$ is a cartesian arrow.
\end{enumerate}
\end{remark}

The 2-category of \wccomonad[s] is a strict 2-pullback over $\Cat$
of the 2-category of fibrations~\eqref{def:fibcat} and the 2-category of comonads~\eqref{def:cmdCat}.

\begin{definition}\label{2catofwccmd}
	The \define{2-category $\wccmdCat$ of \wccomonad[s]} is defined as follows.

	A {\it 0-cell} is a pair $(p,K)$ with $p$ a fibration and $K$ a \wccmd{} on $p$.

	A \define{1-cell} from $(p,K)$ to $(p',K')$
	is a triple $(C,H,\theta)$ as in the diagram below, such that

	\begin{minipage}{.65\textwidth}
		\begin{enumerate}
			\item
			$(C,H)\colon p\to p'$ is a 1-cell in $\fibCat$
			\item
			$(H,\theta)\colon K\to K'$ is a 1-cell in $\cmdCat$.
		\end{enumerate}
	\end{minipage}
	\begin{minipage}{.3\textwidth}
		\[\begin{tikzcd}
			{\mathcal{E}} & {\mathcal{E}'} \\
			{\mathcal{E}} & {\mathcal{E}'} \\
			{\mathcal{C}} & {\mathcal{C}'}
			\arrow[""{name=0, anchor=center, inner sep=0}, "H"{description}, from=1-1, to=1-2]
			\arrow[""{name=1, anchor=center, inner sep=0}, "H"{description}, from=2-1, to=2-2]
			\arrow["C", from=3-1, to=3-2]
			\arrow["K", from=2-1, to=1-1]
			\arrow["{K'}"', from=2-2, to=1-2]
			\arrow["p"', from=2-1, to=3-1]
			\arrow["{p'}", from=2-2, to=3-2]
			\arrow["\theta", shorten <=6pt, shorten >=6pt, Rightarrow, from=0, to=1]
		\end{tikzcd}\]
	\end{minipage}\\

	A \define{2-cell} from $(C_1,H_1,\theta_1)$ to $(C_2,H_2,\theta_2)$
	is a 2-cell $(\psi,\phi) \colon (C_1,H_1) \to (C_2,H_2)$ in $\fibCat$
	as in the left-hand diagram below,
	such that $\phi$ is a 2-cell $(H_1,\theta_1) \to (H_2,\theta_2)$ in $\cmdCat$,
	as in the right-hand side.
	\[\begin{tikzcd}[column sep=3em,row sep=3em]
		\mathcal{E}
		& \mathcal{E'}
		\\
		{\mathcal{C}} & {\mathcal{C}'}
		\arrow["p"', from=1-1, to=2-1]
		\arrow["{p'}", from=1-2, to=2-2]
		\arrow[""{name=4, anchor=center, inner sep=0}, "{C_1}"{description}, curve={height=-12pt}, from=2-1, to=2-2]
		\arrow[""{name=6, anchor=center, inner sep=0}, "{C_2}"{description}, curve={height=12pt}, from=2-1, to=2-2]
		\arrow[""{name=0, anchor=center, inner sep=0}, "{H_1}"{description}, curve={height=-12pt}, from=1-1, to=1-2]
		\arrow[""{name=2, anchor=center, inner sep=0}, "{H_2}"{description}, curve={height=12pt}, from=1-1, to=1-2]
		\arrow["\phi", shorten <=3pt, shorten >=3pt, Rightarrow, from=0, to=2]
		\arrow["\psi", shorten <=3pt, shorten >=3pt, Rightarrow, from=4, to=6]
	\end{tikzcd}
	\hspace{3em}
	\begin{tikzcd}[column sep=3em,row sep=3em]
		{\mathcal{E}} & {\mathcal{E}'} & {\mathcal{E}} & {\mathcal{E}'} \\
		{\mathcal{E}} & {\mathcal{E}'} & {\mathcal{E}} & {\mathcal{E}'}
		\arrow["{K'}"'{name=lh}, from=2-2, to=1-2]
		\arrow["K", from=2-1, to=1-1]
		\arrow[""{name=0, anchor=center, inner sep=0}, "{H_1}"{description}, curve={height=-12pt}, from=1-1, to=1-2]
		\arrow[""{name=1, anchor=center, inner sep=0}, "{H_2}"{description}, curve={height=12pt}, from=2-1, to=2-2]
		\arrow["{K'}"', from=2-4, to=1-4]
		\arrow["K"{name=rh}, from=2-3, to=1-3]
		\arrow[""{name=2, anchor=center, inner sep=0}, "{H_1}"{description}, curve={height=-12pt}, from=1-3, to=1-4]
		\arrow[""{name=3, anchor=center, inner sep=0}, "{H_2}"{description}, curve={height=12pt}, from=2-3, to=2-4]
		\arrow[""{name=4, anchor=center, inner sep=0}, "{H_1}"{description}, curve={height=-12pt}, from=2-1, to=2-2]
		\arrow[""{name=5, anchor=center, inner sep=0}, "{H_2}"{description}, curve={height=12pt}, from=1-3, to=1-4]
		\arrow["{\theta_1}", shorten <=4pt, shorten >=4pt, Rightarrow, from=0, to=4]
		\arrow["\phi", shorten <=3pt, shorten >=3pt, Rightarrow, from=4, to=1]
		\arrow["\phi", shorten <=3pt, shorten >=3pt, Rightarrow, from=2, to=5]
		\arrow["{\theta_2}", shorten <=4pt, shorten >=4pt, Rightarrow, from=5, to=3]
		\arrow[from=lh,to=rh,phantom,"="{description}]
	\end{tikzcd}\]

	We write $\wccmdpsCat$ and $\wccmdstrCat$ for the 2-full 2-subcategories of $\wccmdCat$ with the same 0-cells,
	and only those 1-cells $(C,H,\theta)$ such that $(H,\theta)$ is a pseudo (respectively, strict) morphism of comonads.
\end{definition}

\subsection{The 2-category of \gencwf{ies}}

\begin{definition}[{\cite[Def. 3.0.1]{cjd}}]
\label{jdtt}
A \define{\gencwf{y}}, \define{\gcwf{}} for short,
is the data of a morphism $\Sigma$ of fibrations over the same base $\ctg{B}$
as depicted below,
together with a right adjoint $\Delta$ to $\Sigma$
such that the components of both unit and counit are cartesian
with respect to $\du$ and $u$, respectively.
\[\begin{tikzcd}
	{\duu} && {\uu} \\
	& {\ctg{B}}
	\arrow["{\dot{u}}"', from=1-1, to=2-2]
	\arrow["u", from=1-3, to=2-2]
	\arrow[""{name=0, anchor=center, inner sep=0}, "\Sigma"', curve={height=10pt}, from=1-1, to=1-3]
	\arrow[""{name=1, anchor=center, inner sep=0}, "\Delta"', curve={height=10pt}, from=1-3, to=1-1]
	\arrow["\dashv"{anchor=center, rotate=90}, draw=none, from=0, to=1]
\end{tikzcd}\]
\end{definition}

Notice that the adjunction $\Sigma\dashv\Delta$ in \emph{not} fibred:
the triangle involving $\Delta$ does not commute,
\ie $\Delta$ is not a morphism of functors,
and the unit and counit are cartesian rather then vertical.
Still, in~\eqref{deltacartesian} we will show that it inherits some desirable fibrational properties.

Of course, a category with families~\cite{dybjer_inttt} is the same thing as a \gencwf{y} with discrete fibrations $u$ and $\du$ and a terminal object in $\ctg{B}$, as implied in the following example.

\begin{example}[The free syntactic (g)cwf]\label{exa:free_syntactic}
Given a calculus of dependent types à la Martin-L\"of~\cite{mltt},
see~\cite{Rijke_intro} for an introduction,
one can build a (generalised) category with families as follows,
see \eg~\cite[\S5.5]{PALMGREN2019102715} for the proofs and more details.

\[\begin{tikzcd}[row sep=1ex,column sep=5ex]
\hspace{2ex}{\small\text{$\Gamma.A \vdash \mathtt{v}_A : A$}}
&&
{\small\text{$\Gamma \vdash A\Ty$}}\hspace{4.5ex}	\ar[ll,mapsto]
\\
{\duu = \{{\small \text{$\Gamma \vdash a : A$}}\}}
\ar[dddr,start anchor={[xshift=2ex]south west},bend right=5ex,"\du"']
\ar[rr,shift right=1.1ex,"{\mathrm{T}}"',""{name=la}]
&&
{\{{\small \text{$\Gamma \vdash A\Ty$}}\} = \uu}
\ar[dddl,start anchor={[xshift=-2ex]south east},bend left=5ex,"u"]
\ar[ll,shift right=1.4ex,"{\mathrm{V}}"',""{name=ra}]
\\
\hspace{4.5ex}{\small \text{$\Gamma \vdash a : A$}}	\ar[rr,mapsto] \ar[dr,mapsto,bend right=3ex]
&&
{\small \text{$\Gamma \vdash A\Ty$}}\hspace{4.5ex}	\ar[dl,mapsto,bend left=3ex]
\\[5ex]
&	\Gamma
&\\
&	\ctxcat	&\ar[from=la,to=ra,phantom,"\adj{90}"{description}]
\end{tikzcd}\]

First of all, we can define a category $\ctxcat$ of contexts, whose objects are (equivalence classes of definitionally equal) well-formed contexts of the form $\Gamma=x_1:A_1, \dots, x_n:A_n$ and whose morphisms are (equivalence classes of definitionally equal) terms
\[
t=(t_1,\dots,t_n)\colon\Theta\to\Gamma
\]
where $\Theta\vdash t_1:A_1$ and $\Theta\vdash t_i:A_i[t_1/x_1,\dots,t_{i-1}/x_{i-1}]$ for $i=2:n$. We ought to think of these as substitutions from $\Theta$ into $\Gamma$, with composition being iterated substitution and identity the trivial $(x_1,\dots,x_n)\colon\Gamma\to\Gamma$. The empty context is the terminal object in $\ctxcat$ so defined.

In what follows, in order to improve readability, we omit repeating that all contexts, types, and terms are intended `up to definitional equality', but it is so throughout this construction.
Now, the category of types $\uu$ is that of type judgements and type substitutions: mapping each type judgement to its context provides the structure of a discrete fibration $u\colon\uu\to\ctxcat$. The fibre over each $\Gamma$, then, is the \emph{set} $\uu_\Gamma = \{ \Gamma\vdash A\Ty \}$ which is precisely the image of $\Gamma$ through the presheaf of types $\mathrm{Ty}\colon\ctxcat\opp\to\catof{Set}$ in the classical definition of a cwf, and reindexing along a context morphism $t\colon\Theta\to\Gamma$ precisely computes substitution in types, $\Gamma\vdash A\Ty\mapsto\Theta\vdash A[t/x]\Ty$. Similarly we can define a discrete fibration $\du\colon\duu\to\ctxcat$ classifying terms: its total category is that of typing judgements and term substitutions, which are mapped, respectively, to their underlying context and context morphism.

On top of that we can define an adjoint pair $\mathrm{T}\dashv\mathrm{V}$ where the two functors act as in the two following rules involving the structure of judgements\footnote{While $\mathrm{V}$ is a proper `structural' rule, and it is often assumed, $\mathrm{T}$ describes the structure but is usually derivable in a given calculus of dependent types.}.
\[
\frac{\Gamma\vdash a:A}{\Gamma \vdash A\Ty}(\mathrm{T})
\hspace{4em}
\frac{\Gamma\vdash A\Ty}{\Gamma.A \vdash \texttt{v}_A : A}(\mathrm{V})
\]
Notice that $\mathrm{T}$ makes the obvious triangle commute because contexts are preserved and a morphism of typing judgements is in particular a morphism of type judgements.\footnote{Writing this back to back we realise the way we call the two might lead to some confusion, but we hope to make the distinction clear along the way.}
Being both fibrations discrete, $\mathrm{T}$ is trivially cartesian: notice that this implies \cite[Lemma 2.1]{streicher2022fibred} that it is itself a fibration, so that terms are fibred over types, as well. On the other hand, as we said just before this discussion, notice that it is key that $\mathrm{V}$ does \emph{not} add up to a functor morphism $u\to\du$, or the context would be preserved and we would \emph{not} have context extension.

Finally, we unpack the unit and counit needed: again, they will be cartesian `for free', since both fibrations are discrete. We begin with the counit, whose components need to be morphisms of type judgements $\Gamma.A \vdash A\Ty\to\Gamma\vdash A\Ty$: one can show that the cartesian lifting (\ie substitution) of $(x_1,\dots,x_n)\colon\Gamma.A\to\Gamma$ at (\ie in) $A$ has the desired universal property: it does basically nothing, as expected by weakening. This is often called \emph{projection} or \emph{display}, depending on which model one is considering. The unit, instead, has less popular correspondents in the literature, and at a term $\Gamma\dashv a:A$ it is the cartesian lifting of $(x_1,\dots,x_n,a)\colon\Gamma\to\Gamma.A$ at $\texttt{v}_A$ -- that is substituting $a$ into the fresh free variable produced by context extension.
\end{example}

Examples of \gencwf{ies} are described in \cite[\S\S3-5]{cjd}: among others, they arise from categories with finite products, from Lawvere-style doctrines, from topoi.

\begin{remark}
Since the free syntactic object produced out of a calculus of dependent types produces fibrations that are discrete, one could wonder whether from a type-theoretic perspective it might only be worth to give an account of the discrete case. Elsewhere \cite{ce_subtyping}, we argue that the `remaining' vertical portion of a \gcwf{} is actually apt to describe dependent types \emph{with subtyping}.
\end{remark}

Next, we make few simple observations on \gencwf{ies}.

\begin{remark}\label{jdtt-unitmono}
Each component of the unit in a \gcwf{} is a monic arrow.
Indeed, let $f,g \colon a \to b$ in $\duu$ be such that $\eta_b f = \eta_b g$.
It follows that
\[
\du f = (u\epsilon_{\Sigma b})(\du \eta_b)(\du f)
= (u\epsilon_{\Sigma b})(\du \eta_b)(\du g) = \du g
\]
and, in turn, that $f = g$ since $\eta_b$ is cartesian.
\end{remark}

\begin{lemma}\label{jdtt-leftfaith-iso}
Let $(u,\du,\Sigma\dashv\Delta)$ be a \gcwf.
The left adjoint $\Sigma$ induces a bijection
\[\begin{tikzcd}[column sep=4em]
\duu(a,b)	\ar[r,"\sim"]
&	\{ f \in \uu(\Sigma a, \Sigma b) \mid (\Sigma\eta_b)f = (\Sigma\Delta f)(\Sigma\eta_a) \}.
\end{tikzcd}\]
\end{lemma}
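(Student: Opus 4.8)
The map under consideration sends $g\colon a\to b$ to $\Sigma g$, and naturality of the unit $\eta$ immediately shows that $\Sigma g$ satisfies the displayed equation (apply $\Sigma$ to $\Delta\Sigma g\circ\eta_a=\eta_b\circ g$), so I would first record that the map is well defined. The plan is then to exhibit an explicit two-sided inverse, exploiting three facts: that $\Sigma$ is a morphism of fibrations over $\ctg{B}$, so that $u\Sigma=\du$; that each component $\eta_b$ is cartesian; and that each $\eta_b$ is monic, by Remark~\ref{jdtt-unitmono}.

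For surjectivity I would take $f\in\uu(\Sigma a,\Sigma b)$ with $(\Sigma\eta_b)f=(\Sigma\Delta f)(\Sigma\eta_a)$ and form its transpose $\bar f:=\Delta f\circ\eta_a\colon a\to\Delta\Sigma b$. The crux is to factor $\bar f$ through the cartesian arrow $\eta_b\colon b\to\Delta\Sigma b$. Since $u\Sigma=\du$, applying $\du$ and using functoriality of $u$ turns the hypothesis into $\du\bar f=u(\Sigma\eta_b\circ f)=\du\eta_b\circ uf$, so the base arrow $uf$ supplies the factorisation of $\du\bar f$ through $\du\eta_b$ required by the universal property of $\eta_b$. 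That property then yields a unique $g\colon a\to b$ over $uf$ with $\eta_b\circ g=\bar f$. To conclude $\Sigma g=f$ I would apply $\Sigma$ to $\eta_b\circ g=\bar f$, obtaining $\Sigma\eta_b\circ\Sigma g=\Sigma\eta_b\circ f$ via the hypothesis, and then cancel $\Sigma\eta_b$, which is a split mono by the triangle identity $\epsilon_{\Sigma b}\circ\Sigma\eta_b=\id$.

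Injectivity I expect to be immediate from the same computation: if $\Sigma g=\Sigma g'$, then naturality of $\eta$ gives $\eta_b\circ g=\Delta(\Sigma g)\circ\eta_a=\eta_b\circ g'$, and monicity of $\eta_b$ forces $g=g'$. The one step that requires care is the factorisation of $\bar f$ through $\eta_b$: this is where cartesianness of the unit is indispensable, and where the hypothesis on $f$ is genuinely used (to rewrite $\du\bar f$ as $\du\eta_b\circ uf$). It is worth noting that the hypothesis cannot be weakened, since postcomposing either side of the defining equation with the counit $\epsilon_{\Sigma b}$ collapses it to $f$ (by the triangle identities and naturality of $\epsilon$); it is precisely the uncollapsed equation that carries the factorisation data, and everything else is formal manipulation of the adjunction.
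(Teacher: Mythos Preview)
Your proposal is correct and follows essentially the same approach as the paper: construct the inverse by factoring $\Delta f\circ\eta_a$ through the cartesian arrow $\eta_b$ (using the hypothesis to produce the base factorisation through $uf$), and deduce injectivity from monicity of $\eta_b$ via Remark~\ref{jdtt-unitmono} together with naturality of $\eta$. The paper's proof is simply terser---it leaves implicit the verification that $\Sigma g=f$ (your cancellation of the split mono $\Sigma\eta_b$) and the rewriting of $\du(\Delta f\circ\eta_a)$ as $\du\eta_b\circ uf$---but the argument is the same.
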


\begin{proof}
The counter-image of $f$ in the right-hand set is the (only) arrow $g$ in $\duu(a,b)$ over $uf$
such that $\eta_b g = (\Delta f) \eta_a$.
It exists since $\eta_b$ is cartesian.

To see that $\Sigma$ is faithful, use~\eqref{jdtt-unitmono}
and the naturality square of the unit.
\end{proof}

The next lemma shows that $\Delta$ is a cartesian functor.
Still, recall that $\Delta$ is not (required to be) a morphism of functors over $\ctg{B}$.

\begin{lemma}\label{deltacartesian}
Let $(u,\du,\Sigma\dashv\Delta)$ be a \gcwf.
Then we have that
\begin{enumerate}
\item\label{deltacartesian-iff} $\Delta$ preserves cartesian maps iff $\Sigma$ reflects cartesian maps;
\item\label{deltacartesian-car} $\Delta$ preserves cartesian maps.
\item\label{deltacartesian-refl} $\Sigma$ reflects cartesian maps.
\end{enumerate}
\end{lemma}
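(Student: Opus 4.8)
The plan is to treat part \ref{deltacartesian-iff} as a formal equivalence obtained from two applications of the cancellation property for cartesian arrows recalled in \ref{wccmd-coalg} (if $gf$ and $g$ are cartesian then so is $f$), to prove part \ref{deltacartesian-refl} directly, and then to read off part \ref{deltacartesian-car} by combining the previous two. Throughout I would use three standard facts about a fibration: isomorphisms and composites of cartesian arrows are again cartesian, a cartesian arrow lying over an identity is invertible, and the cancellation property just quoted. I would also use that $\Sigma$ is a morphism of fibrations, hence preserves cartesian arrows and satisfies $u\Sigma=\du$, and that the unit $\eta$ and counit $\epsilon$ have cartesian components.

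For part \ref{deltacartesian-iff} I would argue through the two naturality squares. Suppose first that $\Sigma$ reflects cartesian arrows and let $g$ be $u$-cartesian. In the naturality square of $\epsilon$ at $g$ the composite $g\circ\epsilon_A=\epsilon_B\circ\Sigma\Delta g$ is cartesian, being a composite of the cartesian arrows $g$ and $\epsilon_A$; since $\epsilon_B$ is cartesian, cancellation shows that $\Sigma\Delta g$ is cartesian, whence $\Delta g$ is cartesian because $\Sigma$ reflects. Conversely, suppose $\Delta$ preserves cartesian arrows and let $\Sigma f$ be $u$-cartesian. Then $\Delta\Sigma f$ is $\du$-cartesian, and in the naturality square of $\eta$ at $f$ the composite $\eta_b\circ f=\Delta\Sigma f\circ\eta_a$ is cartesian; cancelling the cartesian arrow $\eta_b$ shows that $f$ is cartesian. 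This gives the equivalence.

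For part \ref{deltacartesian-refl} I would fix $f\colon a\to b$ with $\Sigma f$ $u$-cartesian and factor it through a chosen $\du$-cartesian lift. Writing $w:=\du f$ and letting $\bar f\colon w^{*}b\car b$ be a cartesian lift of $w$ at $b$, there is a unique vertical $j\colon a\to w^{*}b$ with $f=\bar f\circ j$. Applying the fibred functor $\Sigma$, the arrow $\Sigma\bar f$ is $u$-cartesian and $\Sigma j$ is vertical (since $u\Sigma j=\du j=\id$), and from the fact that $\Sigma f=\Sigma\bar f\circ\Sigma j$ is cartesian, cancellation forces $\Sigma j$ to be cartesian, hence an isomorphism because it is vertical. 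Then $\Delta\Sigma j$ is an isomorphism, so it is cartesian, and the naturality square $\eta_{w^{*}b}\circ j=\Delta\Sigma j\circ\eta_a$ exhibits $\eta_{w^{*}b}\circ j$ as a composite of cartesian arrows; cancelling the cartesian arrow $\eta_{w^{*}b}$ shows that $j$ is cartesian, and being vertical it is an isomorphism. Therefore $f=\bar f\circ j$ is cartesian. Part \ref{deltacartesian-car} then follows at once by combining \ref{deltacartesian-iff} with \ref{deltacartesian-refl}.

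The step I expect to be the crux is the passage, inside part \ref{deltacartesian-refl}, from $\Sigma j$ being invertible to $j$ being invertible. Faithfulness of $\Sigma$ (from \ref{jdtt-leftfaith-iso}) does not by itself reflect isomorphisms, so the cartesianness of the unit is genuinely needed here: it is precisely what turns the naturality square of $\eta$ at the vertical map $j$ into a cancellation promoting $j$ from vertical to cartesian, and hence to an isomorphism. Once one sees that $f$ should be decomposed into a cartesian lift followed by a vertical map, and that the cartesian unit reflects isomorphisms along vertical arrows, the remaining verifications are routine diagram chases.
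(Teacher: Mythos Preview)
Your argument is correct. Part~\ref{deltacartesian-iff} is essentially identical to the paper's treatment, using the naturality squares of $\eta$ and $\epsilon$ together with the cancellation property.

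The genuine difference is in which of \ref{deltacartesian-car} and \ref{deltacartesian-refl} is proved directly. The paper establishes \ref{deltacartesian-car} by hand: given $h$ cartesian it verifies the universal property of $\Delta h$ explicitly, first noting that $\Sigma\Delta h$ is cartesian (via cancellation against $\epsilon_B$), then producing the induced map as an adjoint transpose and invoking faithfulness of $\Sigma$ from~\eqref{jdtt-leftfaith-iso} for uniqueness; part~\ref{deltacartesian-refl} then follows from \ref{deltacartesian-iff}. You instead prove \ref{deltacartesian-refl} directly by the vertical/cartesian factorisation $f=\bar f\circ j$, observe that $\Sigma j$ is a vertical cartesian arrow hence invertible, and then use cartesianness of $\eta$ in the naturality square $\eta_{w^{*}b}\circ j=\Delta\Sigma j\circ\eta_a$ to cancel and conclude $j$ is invertible. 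Part~\ref{deltacartesian-car} is then read off from \ref{deltacartesian-iff}. Your route is somewhat lighter: it avoids the transpose construction and the separate appeal to faithfulness of $\Sigma$, relying only on cancellation and the fact that a vertical cartesian arrow is an isomorphism. The paper's route, on the other hand, gives an explicit description of the filler for the cartesian lifting problem for $\Delta h$, which is informative in its own right.
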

\begin{proof}
Let us start with~\eqref{deltacartesian-iff}.
From left to right,
let $f\colon a\to b$ in $\duu$ such that $\Sigma f$ is cartesian,
then $\Delta\Sigma f$ is cartesian, and we have the following
\[\begin{tikzcd}[ampersand replacement=\&]
	a \& b \&\& {\Sigma a} \& {\Sigma b} \\
	{\Delta\Sigma a} \& {\Delta\Sigma b}
	\arrow["f", from=1-1, to=1-2]
	\arrow["{\Delta\Sigma f}"', from=2-1, to=2-2]
	\arrow["{\eta_a}"', from=1-1, to=2-1]
	\arrow["{\eta_b}", from=1-2, to=2-2]
	\arrow["{\Sigma f}", from=1-4, to=1-5]
\end{tikzcd}\]
with cartesian units, hence $\eta_b f$ is cartesian with $\eta_b$ cartesian.
By~\eqref{wccmd-coalg}, $f$ is cartesian too.
The converse can be worked out the dual way using counits.

Next, we prove~\eqref{deltacartesian-car}.
Let $h\colon A \to B$ in $\uu$ be cartesian
and consider $f\colon c\to \Delta B$ and $\phi \colon \du c \to \du \Delta A$ such that $\du f = \du \Delta h \circ \phi$,
as in the left-hand diagrams below.
\[\begin{tikzcd}[ampersand replacement=\&]
	c \&\&\&\& {\Sigma c} \\
	\& {\Delta A} \& {\Delta B} \&\&\& {\Sigma\Delta A} \& {\Sigma\Delta B} \\
	\&\&\&\& A \& B \\
	\&\& {\du c} \\
	\&\&\& {\du\Delta A} \& {\du \Delta B} \\
	\&\& {u A} \& uB
	\arrow["h"', from=3-5, to=3-6]
	\arrow["{\Delta h}"', from=2-2, to=2-3]
	\arrow["f", curve={height=-6pt}, from=1-1, to=2-3]
	\arrow["{\du \Delta h}"'{pos=0.2}, from=5-4, to=5-5]
	\arrow["uh"', from=6-3, to=6-4]
	\arrow["{u\epsilon_B}", from=5-5, to=6-4]
	\arrow["{u\epsilon_A}"', from=5-4, to=6-3]
	\arrow["{\du f}", curve={height=-6pt}, from=4-3, to=5-5]
	\arrow["\phi"', from=4-3, to=5-4]
	\arrow["{g'}"', dashed, from=1-1, to=2-2]
	\arrow["{\Sigma\Delta h}"', from=2-6, to=2-7]
	\arrow["{\epsilon_A}"', from=2-6, to=3-5]
	\arrow["{\epsilon_B}", from=2-7, to=3-6]
	\arrow["{\Sigma f}", curve={height=-6pt}, from=1-5, to=2-7]
	\arrow["g"', dotted, from=1-5, to=2-6]
\end{tikzcd}\]
Note first that $\Sigma\Delta h$ is cartesian by~\eqref{wccmd-coalg} because its postcomposition with $\epsilon_B$ is.
It follows that there exists a unique dotted map
$g\colon \Sigma c\to \Sigma \Delta A$
that post-composed with $\Sigma\Delta h$ is $\Sigma f$.
We can then take the transpose of the composite $\epsilon_A g$ to be $g'$.
The left-hand triangle commutes since the right-hand diagram does.
This $g'$ the unique such since, in addition, 
$\Sigma$ is faithful by~\eqref{jdtt-leftfaith-iso}.
\end{proof}

Morphisms of \gencwf{ies} are defined using morphisms of fibrations~\eqref{def:fibcat} and morphisms of adjunctions~\eqref{def:adjmorph}.

\begin{definition}\label{jdtt-morph}
	Let $\jdtt{U}=(u,\du,\Sigma\dashv\Delta)$ and $\jdtt{U}'=(u',\du',\Sigma'\dashv\Delta')$ be \gcwfs.
	A \define{(lax) loose \gcwf{} morphism} from $\jdtt{U}$ to $\jdtt{U}'$
		is a quadruple $(C,H,\dot{H},\zeta)$ such that
		\begin{enumerate}
			\item
			$(C,H)\colon u\to u'$ is a 1-cell in $\fibCat$,
			\item
			$(C,\dot{H})\colon \dot{u}\to\dot{u}'$ is a 1-cell in $\fibCat$, and
			\item
			$(H,\dot{H},\zeta)\colon(\Sigma,\Delta)\to(\Sigma',\Delta')$ is a 1-cell in $\adjLlCat$\!\!, \ie a left loose morphism of adjunctions.
			In particular $\zeta \colon \Sigma'\dot{H}\natiso H\Sigma $.
		\end{enumerate}
	\[\begin{tikzcd}
		& \duu && {\duu'} \\
		\uu && {\uu'} \\
		\\
		\ctg{B} && {\ctg{B}'}
		\arrow["u"', from=2-1, to=4-1]
		\arrow["\du"{pos=.6}, curve={height=-12pt}, from=1-2, to=4-1]
		\arrow["{u'}"', from=2-3, to=4-3]
		\arrow["{\du'}"{pos=.6}, curve={height=-12pt}, from=1-4, to=4-3]
		\arrow["C", from=4-1, to=4-3]
		\arrow["H"{pos=.7}, from=2-1, to=2-3,crossing over]
		\arrow["{\dot{H}}", from=1-2, to=1-4]
		\arrow[""{name=0, anchor=center, inner sep=0}, "\Sigma"{description}, curve={height=12pt}, from=1-2, to=2-1]
		\arrow[""{name=1, anchor=center, inner sep=0}, "\Delta"{description}, curve={height=12pt}, from=2-1, to=1-2]
		\arrow[""{name=2, anchor=center, inner sep=0}, "{\Delta'}"{description}, curve={height=12pt}, from=2-3, to=1-4]
		\arrow[""{name=3, anchor=center, inner sep=0}, "{\Sigma'}"{description}, curve={height=12pt}, from=1-4, to=2-3]
		\arrow["\dashv"{anchor=center, rotate=-43}, draw=none, from=0, to=1]
		\arrow["\dashv"{anchor=center, rotate=-42}, draw=none, from=3, to=2]
	\end{tikzcd}\]

	A \define{(lax) \gcwf{} morphism} from $\jdtt{U}$ to $\jdtt{U}'$ is a loose \gcwf{} morphism $(C,H,\dot{H},\zeta)$
	such that $\zeta = \id \colon \Sigma'\dot{H} = H\Sigma$.

	A loose \gcwf{} morphism $(C,H,\dot{H},\zeta)$ is \define{pseudo} (respectively \define{strict})
	when the corresponding 1-cell in $\adjLlCat$ is.
\end{definition}

The 2-category of \gencwf{ies} is a pullback, over $\Cat \times \Cat \times \Cat$,
involving the 2-category of fibrations~\eqref{def:fibcat} (two times)
and the ``left'' 2-category of adjunctions~\eqref{def:adjCat}.

\begin{definition}\label{jdtt-2cat}
	The \define{2-category $\jdttlCat$ of \gcwfs{} and loose \gcwf{} morphisms}
	has these as objects and arrows, and a 2-cell
	$(C_1,H_1,\dot{H}_1,\zeta_1) \to (C_2,H_2,\dot{H}_2,\zeta_2)$
	is a triple $(\phi,\dot{\phi},\psi)$ of natural transformations as in the left-hand diagram below,
	such that
	\begin{enumerate}
		\item\label{jdtt-2cat:fibty}
		$(\psi,\phi)$ is a 2-cell $(C_1,H_1) \to (C_2,H_2)$ in $\fibCat$ (\ie in $\Cat^2$),
		\item\label{jdtt-2cat:fibt}
		$(\psi,\dot{\phi})$ is a 2-cell $(C_1,\dot{H}_1) \to (C_2,\dot{H}_2)$ in $\fibCat$, and
		\item\label{jdtt-2cat:leftsq}
		$(\phi,\dot{\phi})$ is a 2-cell $(H_1,\dot{H}_1,\zeta_1) \to (H_2,\dot{H}_2,\zeta_2)$ in $\adjLlCat$,
		meaning that the right-hand diagram below commutes.
	\end{enumerate}
	\[\begin{tikzcd}[column sep=2em]
		& \duu &&& {\duu'} \\
		\uu &&& {\uu'} \\
		\\
		\ctg{B} &&& {\ctg{B}'}
		\arrow["u"'{pos=0.6}, from=2-1, to=4-1]
		\arrow["\Sigma"', from=1-2, to=2-1]
		\arrow["\du"{pos=0.6}, curve={height=-12pt}, from=1-2, to=4-1]
		\arrow[""{name=0, anchor=center, inner sep=0}, "{\dot{H}_1}"{description}, curve={height=-12pt}, from=1-2, to=1-5]
		\arrow[""{name=1, anchor=center, inner sep=0}, "{\dot{H}_2}"{description}, curve={height=12pt}, from=1-2, to=1-5]
		\arrow["{\Sigma'}", from=1-5, to=2-4]
		\arrow["{\du'}"{pos=0.6}, curve={height=-12pt}, from=1-5, to=4-4]
		\arrow[""{name=2, anchor=center, inner sep=0}, "{H_1}"{description}, curve={height=-12pt}, from=2-1, to=2-4, crossing over]
		\arrow[""{name=3, anchor=center, inner sep=0}, "{H_2}"{description}, curve={height=12pt}, from=2-1, to=2-4, crossing over]
		\arrow["{u'}"'{pos=0.6}, from=2-4, to=4-4]
		\arrow[""{name=4, anchor=center, inner sep=0}, "{C_1}"{description}, curve={height=-12pt}, from=4-1, to=4-4]
		\arrow[""{name=5, anchor=center, inner sep=0}, "{C_2}"{description}, curve={height=12pt}, from=4-1, to=4-4]
		\arrow["{\dot{\phi}}", shorten <=3pt, shorten >=3pt, Rightarrow, from=0, to=1]
		\arrow["\phi", shorten <=3pt, shorten >=3pt, Rightarrow, from=2, to=3]
		\arrow["\psi", shorten <=3pt, shorten >=3pt, Rightarrow, from=4, to=5]
	\end{tikzcd}
	\hspace{3em}
	\begin{tikzcd}[row sep=2em,column sep=2em]
	\Sigma'\dot{H}_1	\ar[d,Rightarrow,"\Sigma'\psi"'] \ar[r,Rightarrow,"{\zeta_1}"]
	&	H_1\Sigma	\ar[d,Rightarrow,"\phi \Sigma"]
	\\
	\Sigma'\dot{H}_2	\ar[r,Rightarrow,"\zeta_2"']
	&	H_2\Sigma
	\end{tikzcd}
	\]

	The \define{2-category $\jdttCat$ of \gcwfs{} and \gcwf{} morphisms}
	is defined as the wide 2-full sub-2-category of $\jdttlCat$
	on the \gcwf{} morphisms.

	We write $\jdttlpsCat$, $\jdttpsCat$, and $\jdttstrCat$ for the 2-full 2-subcategories of $\jdttlCat$ and $\jdttCat$ on the 1-cells which are pseudo and strict morphisms, respectively.
\end{definition}

\section{The biequivalence between comprehension categories and \gencwf{ies}}
\label{sec:bieqv}

In this section we shall prove the following result.

\begin{theorem}\label{eqv-compc-gcwf}
There is an adjoint biequivalence.
\[\begin{tikzcd}[column sep=6em]
\compcatCat	\ar[r,shift right=6pt,"\comptogcfw"'] \ar[r,phantom,"\equiv"{description}]
&	\jdttlCat	\ar[l,shift right=6pt,"\gcwftocomp"']
\end{tikzcd}\]
such that $\gcwftocomp \circ \comptogcfw = \Id$.

The biequivalence restricts to the wide 2-full sub-2-categories on the pseudo morphisms.
\end{theorem}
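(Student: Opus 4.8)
The plan is to factor the biequivalence through the 2-category $\wccmdCat$ of \wccomonad[s], realising $\comptogcfw$ and $\gcwftocomp$ as composites of two biequivalences: one relating comprehension categories to \wccomonad[s], and one relating the latter to \gencwf{ies}. Concretely, I would set $\comptogcfw := \wccmdtojdttps \circ \cmpcttowccmd$ and $\gcwftocomp := \wccmdtocmpct \circ \jdttpstowccmd$, and then invoke that a composite of adjoint biequivalences is again one. Since all the 2-functors involved preserve the degree of strictness of morphisms, the whole construction will restrict to pseudo morphisms at the very end.

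For the first factor, $\compcatCat \simeq \wccmdCat$, I would make Jacobs' object-level correspondence of~\eqref{def:compcat} 2-functorial. A comprehension category $(p,\chi)$ yields a \wccmd{} on $\ctg{E}$ by reindexing each $E$ along its own display map $\chi_E = p\epsilon_E$: conditions~\ref{wccmd-def:car} and~\ref{wccmd-def:pb} of~\eqref{wccmd-def} are precisely the cartesianness of the counit and the pullback condition encoding that $\chi$ preserves cartesian arrows. Conversely, $\wccmdtocmpct$ reads off the comprehension from the counit, $E \mapsto p\epsilon_E$, and the two constructions are mutually inverse on the nose in the direction $\wccmdtocmpct \circ \cmpcttowccmd = \Id_{\compcatCat}$. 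The bulk of this step is the routine (if lengthy) verification that lax morphisms and 2-cells on both sides correspond, organised around the observation that a \wccmd{} is exactly a fibration together with a comonad whose counit is cartesian and satisfies the Beck--Chevalley-type pullback.

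The second factor, $\wccmdCat \simeq \jdttlCat$, is where the real work lies, and it is obtained by lifting the bireflection of~\eqref{thm:cmd-adjloo} to the fibrational setting. A \wccmd{} $K$ on $u$ produces a \gcwf{} by taking $\duu := \coal(K)$ with the Eilenberg--Moore adjunction $\emfrg{K} \dashv \emrgt{K}$ as $\Sigma \dashv \Delta$; conversely $\jdttpstowccmd$ sends $\Sigma \dashv \Delta$ to the induced comonad $\adjlootocmd(\Sigma,\Delta) = \Sigma\Delta$ on $u$, and the cartesianness axioms of a \gcwf{} guarantee that this is again a \wccmd. By the strict counit of~\eqref{thm:cmd-adjloo} we obtain $\jdttpstowccmd \circ \wccmdtojdttps = \Id_{\wccmdCat}$ for free. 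The content that goes beyond~\eqref{thm:cmd-adjloo} — where the bireflection is only a reflection — is that here the unit is an equivalence, i.e.\ that for every \gcwf{} the Eilenberg--Moore comparison $\emcmp{\Sigma,\Delta} \colon \duu \to \coal(\Sigma\Delta)$ is an equivalence of categories. Full faithfulness of $\emcmp{\Sigma,\Delta}$ is precisely the bijection of~\eqref{jdtt-leftfaith-iso}, so the \emph{main obstacle} is its essential surjectivity. I expect to prove this by reconstructing, from a coalgebra $(A,a)$ of $\Sigma\Delta$, a preimage in $\duu$: the structure map $a$ is $u$-cartesian by~\eqref{wccmd-coalg}, and this cartesianness, together with $\du$ being a fibration and the unit components being cartesian monos by the definition of a \gcwf{} and~\eqref{jdtt-unitmono}, lets one lift $(A,a)$ back through $\Sigma$. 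It is exactly because this quasi-inverse commutes with the left adjoints only up to the vertical natural iso $\hat{\zeta}$ of~\eqref{rem:liftunit}, rather than strictly, that the \gcwf{} side must carry \emph{loose} morphisms for the comparison to become an equivalence of 2-categories.

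Finally, composing the two biequivalences yields the stated adjoint biequivalence, and the strict identity follows by combining the two one-sided strict identities:
\[
\gcwftocomp \circ \comptogcfw
= \wccmdtocmpct \circ \jdttpstowccmd \circ \wccmdtojdttps \circ \cmpcttowccmd
= \wccmdtocmpct \circ \cmpcttowccmd
= \Id.
\]
Since~\eqref{thm:cmd-adjloo} and both factors restrict to pseudo morphisms, the composite biequivalence restricts to $\compcatpsCat \simeq \jdttlpsCat$, as claimed.
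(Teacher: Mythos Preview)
Your proposal is correct and follows essentially the same route as the paper: factor through $\wccmdCat$, use the Jacobs correspondence for the first leg and lift the comonad--adjunction bireflection of~\eqref{thm:cmd-adjloo} for the second, with the key upgrade from bireflection to biequivalence coming from the comparison functor $\emcmp{\Sigma,\Delta}$ being an equivalence of categories. One small correction: the vertical iso making the quasi-inverse into a loose \gcwf{} morphism is not the $\hat{\zeta}$ of~\eqref{rem:liftunit} (which handles pseudo-naturality of the unit) but a separate vertical iso $\xi \colon \emcmp{}\eminvcmp{} \natiso \Id$ that the paper constructs explicitly in~\eqref{lem:inverse_to_comparison}; otherwise your outline matches the paper's argument.
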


The biequivalence is obtained composing the two biequivalences appearing in the diagram below, which commutes appropriately (left and right adjoints separately)
and where the vertical arrows are the obvious forgetful functors.

All top 2-categories have a forgetful to $\fibCat$ (for a \gcwf{} $(u,\du,\Sigma\dashv\Delta)$ it is $u$),
and all top 2-functors commute (strictly) with these forgetful 2-functors.

\[\begin{tikzcd}[column sep=2.5em,row sep=2.5em]
\compcatCat		\ar[rr,shift right=1ex,"\cmpcttowccmd"{swap}]
				\ar[rr,phantom,shift left=.3ex,"\equiv"{description}]
&&	\wccmdCat	\ar[dd]
				\ar[ll,shift right=1.5ex,"\wccmdtocmpct"{swap}]
				\ar[rrr,shift right=1ex,"\wccmdtojdttps"{swap}]
				\ar[rrr,phantom,shift left=.3ex,"\equiv"{description}]
&&&	\jdttlCat	\ar[dd]
				\ar[lll,shift right=1.5ex,"\jdttpstowccmd"{swap}]
\\
&	\wccmdCat	\ar[dd] \ar[ur,"\Id"]
			\ar[rrr,shift right=1ex, crossing over,"\wccmdtojdtt"{swap}]
			\ar[rrr,phantom,shift left=.3ex,"\adj{-90}"{description}]
&&&	\jdttCat	\ar[ur,hook]
				\ar[lll,shift right=1.5ex, crossing over,"\jdtttowccmd"{swap}]
&\\
&&	\cmdCat	\ar[rrr,shift right=1ex,hook,"\cmdtoadjloo"{swap}]
			\ar[rrr,phantom,shift left=.3ex,"\adj{-90}"{description}]
&&&	\adjLlCat	\ar[lll,shift right=1.5ex,"\adjlootocmd"{swap}]
				\ar[from=2-5,to=4-5,crossing over]
\\
&	\cmdCat	\ar[ur,"\Id"]
		\ar[rrr,shift right=1ex,hook,"\cmdtoadj"{swap}]
		\ar[rrr,phantom,shift left=.3ex,"\adj{-90}"{description}]
&&&	\adjLCat	\ar[ur,hook]
				\ar[lll,shift right=1.5ex,"\adjtocmd"{swap}]	&
\end{tikzcd}\]

The adjunction in the bottom-front is the 2-adjunction from Theorem~\ref{thm:cmd-adj}.
The one in the bottom-back is the biadjunction from Theorem~\ref{thm:cmd-adjloo}.

The left-hand biequivalence is proved in Theorem~\ref{eqv-wccmd-cmpct},
the right-hand one is proved in Theorem~\ref{eqv-wccmd-jdtt},
which also proves the top 2-adjunction.

\begin{remark}\label{eqv-is-fibr}
All top 2-categories in the diagram above have a forgetful to $\fibCat$ (for a \gcwf{} $(u,\du,\ldots)$ it is $u$)
and, as it is clear from their definitions in~\eqref{cmpct2wccmd},~\eqref{wccmd2cmpct},~\eqref{cor:wccmd2jdtt}, and~\eqref{cor:jdtt2wccmd},
all top 2-functors commute (strictly) with these forgetful 2-functors.

In particular, it follows that the whole diagram restricts to the 2-full sub-2-categories
on objects whose base category has a terminal object\ and on morphisms preserving it.
\end{remark}

\begin{remark}\label{dsc-gcwf}
In the 2-category $\dscjdttCat$ of \emph{discrete} \gencwf{ies},
\ie those whose fibrations $u$ and $\du$ are discrete fibrations,
every loose \gcwf{} morphism is a \gcwf{} morphism
since in discrete fibrations all vertical isos are identities.
Note however that it still makes sense to distinguish between lax, pseudo, and strict morphisms,
since the mate of $\id\colon \Sigma'\dot{H} = H\Sigma$ need not be vertical.
In fact, it is vertical if and only if the morphism strictly preserves comprehensions.
Let us identify categories with families with discrete \gencwf{ies}:
\[
\cwfCat := \dscjdttCat,
\hspace{2em}
\cwfpsCat := \dscjdttpsCat,
\hspace{2em}
\cwfstrCat := \dscjdttstrCat.
\]

Consider also the full sub-2-category $\dsccompcatCat$ of $\compcatCat$
on those objects with discrete fibration.
Note that 2-cells $(\psi,\phi)$ in $\dsccompcatCat$ and
2-cells $(\psi,\phi,\dot{\phi})$ in $\dscjdttCat$
are determined by $\psi$
since all components of $\phi$ and $\dot{\phi}$ have to be cartesian,
and cartesian lifts are unique in discrete fibrations.
In particular, the 2-category $\cwfpsCat$
(together with terminal objects in base categories and morphisms preserving terminal objects, see~\eqref{eqv-is-fibr})
is the one described by Uemura in~\citey[Example~5.21]{Uemura2023}:
the Beck-Chevalley condition~\cite[Definition~3.13]{Uemura2023}
requires the mate of $\id\colon \Sigma'\dot{H} = H\Sigma$ to be invertible.
More general morphisms between categories with families,
the pseudo cwf-morphisms of Clairambault and Dybjer in particular, are discussed in \eqref{ps-cwfmorph}.

In~\eqref{disceqv}, we show that
the biequivalence in~\eqref{eqv-compc-gcwf} restricts to an adjoint 2-equivalence between $\dsccompcatCat$ and $\cwfCat$,
which further restricts to their 2-full sub-2-categories on pseudo and strict morphisms and, in particular,
yields the classical equivalence by Hofmann between discrete comprehension categories and categories with families.
\end{remark}

\begin{remark}\label{compcat-jdtt:split}
The biequivalence also restricts if we require
\begin{enumerate}
\item
all the components on $\fibCat$ of the 0-cells to come equipped with a split cleavage and, for \gcwfs{}, that the functor $\Sigma$ preserves the cleavage, and
\item
all the components on $\fibCat$ of the 1-cells to preserve the cleavage.
\end{enumerate}
Indeed, the 2-functors $\cmpcttowccmd$, $\wccmdtocmpct$, and $\jdttpstowccmd$
fix the component on $\fibCat$ of the structures involved.
The 2-functor $\wccmdtojdttps$ fixes the first component on $\fibCat$
and its second $\fibCat$ component is the fibration of coalgebras.
As observed in~\ref{wccmd2jdtt:split},
the fibration of coalgebras of a \wccmd{} on a split fibration is also split,
and given a lax morphism of \wccmd[s] $(C,H,\theta)$ such that $(C,H)$ preserves the cleavage,
the pair $(C,\coal(H,\theta))$ also preserves the cleavage.
\end{remark}

\subsection{The biequivalence between comprehension categories and \wccmd[s]}
\label{wccmd_v_compcat}

First of all, we prove the 2-equivalence suggested in \cite[9.3.4]{jacobs1999categorical}. For the following result we need to assume that the underlying fibrations of comprehension categories and \wccmd[s] are cloven. Morphisms, however, are not required to preserve cleavages.

\begin{lemma}\label{cmpct2wccmd}
There is a 2-functor $\cmpcttowccmd \colon \compcatCat \to \wccmdCat$.

This 2-functor restricts to the wide sub-2-category on the pseudo morphisms.
\end{lemma}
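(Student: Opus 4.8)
The plan is to realise $\cmpcttowccmd$ as the operation that lifts the comprehension structure through the chosen cleavage, so that the resulting \wccmd{} has as counit exactly the cartesian lift of the display map. On a $0$-cell $(p,\chi)$ I keep $p$ and define $K$ on $\ctg{E}$ by letting $\epsilon_E\colon KE\to E$ be the chosen cartesian lift of the display map $\chi_E\colon X_E\to pE$ at $E$ (so $pKE=X_E$ and $p\epsilon_E=\chi_E$), and by letting $Kh$, for $h\colon E\to E'$, be the unique arrow over $\dom\chi(h)$ with $\epsilon_{E'}\circ Kh=h\circ\epsilon_E$, which forces $\epsilon$ to be natural. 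Condition~\ref{wccmd-def:car} of~\eqref{wccmd-def} holds since $\epsilon$ is cartesian by construction. For condition~\ref{wccmd-def:pb}, I observe that the $p$-image of the naturality square of $\epsilon$ at an arrow $f$ is, up to transposition, the square $\chi(f)$ in $\ctg{B}$ (because $p\epsilon=\chi$ and $pKf=\dom\chi(f)$); since a square is a pullback if and only if its transpose is, this is a pullback for cartesian $f$ exactly because $\chi$ preserves cartesian arrows. By~(\ref{wccmd-rmk}.\ref{wccmd-rmk:def}) the comonad data are then uniquely determined by the copointed endofunctor $(K,\epsilon)$, so $(p,K)$ is a genuine \wccmd.

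On a $1$-cell $(B,H,\zeta)\colon(p,\chi)\to(p',\chi')$ I put $C:=B$, keep $H$, and build $\theta\colon HK\Rightarrow K'H$ from the universal property of the cartesian counit $\epsilon'$. Indeed $H\epsilon_E$ is cartesian ($H$ preserves cartesian arrows), and the triangle of~\eqref{cmpct-morph-triang} gives $\chi'_{HE}\circ\zeta_E=B\chi_E=p'(H\epsilon_E)$, so there is a unique $\theta_E\colon HKE\to K'HE$ over $\zeta_E$ with $\epsilon'_{HE}\circ\theta_E=H\epsilon_E$; this last equation is the first comonad-morphism axiom of~\eqref{def:cmdCat}, and naturality of $\theta$ follows by uniqueness. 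Note that $\theta$ need not be an identity even though $H$ preserves cartesian arrows, precisely because $H$ is not required to send the chosen lift $\epsilon_E$ to the chosen lift $\epsilon'_{HE}$. Functoriality on $1$-cells is again a uniqueness argument: the $\theta$ attached to a composite and the composite $(\theta_2 H_1)(H_2\theta_1)$ formed in $\cmdCat$ satisfy the same defining equation against the target counit and have the same base image, hence coincide; identities are handled likewise.

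A $2$-cell $(\psi,\phi)$ is sent to itself. The $\fibCat$ condition $p'\phi=\psi p$ is unchanged, so it remains to check the $\cmdCat$ condition $(K'\phi)\theta_1=\theta_2(\phi K)$. Post-composing both arrows $H_1K\Rightarrow K'H_2$ with the cartesian $\epsilon'_{H_2E}$ and using naturality of $\epsilon'$ and of $\phi$ together with the defining equations of $\theta_1,\theta_2$, both reduce to $\phi_E\circ H_1\epsilon_E$; their $p'$-images are the two sides of the domain component of the comprehension identity $\chi'\phi\circ\zeta_1=\zeta_2\circ\psi\due\chi$ (cf.~\eqref{cmpcat-2cell-sq}), so they agree, and the two arrows coincide by cartesianness. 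Since $2$-cells pass through unchanged, vertical and horizontal composition and whiskering are preserved automatically, making $\cmpcttowccmd$ a $2$-functor.

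For the restriction, if $\zeta$ is invertible then each $\theta_E$ is a morphism over the iso $\zeta_E$ between the two cartesian arrows $H\epsilon_E$ and $\epsilon'_{HE}$ with common codomain $HE$, hence an isomorphism; thus $(H,\theta)$ is a pseudo morphism of comonads and $\cmpcttowccmd$ carries $\compcatpsCat$ into $\wccmdpsCat$. I expect the main obstacle to be the remaining (comultiplication) comonad-morphism axiom for $\theta$: since $\nu$ and $\nu'$ are only determined implicitly by counitality, verifying it requires both reducing the two sides via the cartesian counit $\epsilon'$ and matching their base images, equivalently checking that each $\theta_E$ is a coalgebra morphism in the sense of~\eqref{lifttocoal}. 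Everything else collapses to the universal property of cartesian lifts once one notices that condition~\ref{wccmd-def:pb} of a \wccmd{} is literally the cartesian-arrow preservation built into~\eqref{def:compcat}.
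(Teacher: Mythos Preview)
Your proposal is correct and follows essentially the same route as the paper: define $K$ via chosen cartesian lifts of display maps, obtain $\theta$ from the universal property of the cartesian counit $\epsilon'$, and verify the 2-cell condition by post-composing with $\epsilon'_{H_2E}$. The one place where the paper is quicker is the comultiplication axiom you flag as an obstacle: rather than chasing both sides separately, the paper appeals directly to~(\ref{wccmd-rmk}.\ref{wccmd-rmk:def}), which says $\nu$ and $\nu'$ are uniquely determined by the counitality equations against the pullback, so compatibility with counits already forces compatibility with comultiplications. Your sketched verification is correct but can be replaced by that one-line observation.
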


\begin{proof}
	Let $(p,\chi)\colon \mathcal{E}\to \ctg{B}\due$ be a comprehension category
	together with a cleavage for $p$.
	For each $E$ in $\ctg{E}$,
	consider the chosen reindexing of $E$ along its comprehension $\chi_E$ as below.
	\begin{equation}\label{cmpct2wccmd-cmd}
	\begin{tikzcd}[ampersand replacement=\&]
		{K_\chi E} \& E \& {\mathcal{E}} \\
		{X_E} \& pE \& {\ctg{B}}
		\arrow["{\chi_E}", from=2-1, to=2-2]
		\arrow["{\overline{\chi_E}}", from=1-1, to=1-2]
		\arrow["p", from=1-3, to=2-3]
	\end{tikzcd}
	\end{equation}
	Since cartesian lifts are defined by a universal property,
	$K_\chi$ extends to an endofunctor $K_\chi$ on $\mathcal{E}$.
	Moreover, $K_\chi$ is copointed
	because the transformation $\epsilon_E := \overline{\chi_E}$ is natural
	by the very definition of $K_\chi$ on arrows.
	It satisfies~(\ref{wccmd-def}.\ref{wccmd-def:car}) by construction
	and~(\ref{wccmd-def}.\ref{wccmd-def:pb})
	by the fact that $\chi$ preserves cartesian arrows.
	Therefore $(K_\chi,\epsilon)$ is a \wccmd.
	
	A lax morphism of comprehension categories $(C,H,\zeta)\colon (p,\chi)\to(p',\chi')$
	induces a 1-cell $(C,H)\colon p \to p'$ in $\fibCat$ by its very definition.
	To obtain a lax morphism of \wccmd[s] $(C,H,\theta)\colon K_\chi \to K_{\chi'}$,
	it only remains to provide $\theta\colon HK_{\chi} \Rightarrow K_{\chi'}H$
	that makes $(H,\theta)$ into a lax morphisms of comonads.
	For $E$ over $X$, the component $\theta_E$ can be obtained,
	using the fact that $\epsilon'_{HE}$ is cartesian,
	as the universal arrow induced by $H\epsilon_E$ as in the diagram below.
	\begin{equation}\label{cmpct2wccmd-theta}
	\begin{tikzcd}
		{K'HE}
		&&&	BX_E	&
		\\
		HKE & HE
		&&	{X'_{HE}} & BX
		\arrow["{H(\epsilon_E)}", curve={height=-12pt}, from=1-1, to=2-2]
		\arrow["{B(\chi_E)}", curve={height=-12pt}, from=1-4, to=2-5]
		\arrow["{\epsilon'_{HE}}", from=2-1, to=2-2]
		\arrow["{\theta_E}"',dashed, from=1-1, to=2-1]
		\arrow["{\chi'_{HE}}", from=2-4, to=2-5]
		\arrow["\zeta_E"', from=1-4, to=2-4]
	\end{tikzcd}
	\end{equation}
	Naturality of $\theta$ follows from that of $\epsilon$ and $\epsilon'$
	using again the fact that the components of $\epsilon'$ are cartesian.
	Finally, $\theta$ commutes with the counits by definition,
	and it does so with the comultiplications since these are canonically
	determined by counits~(\ref{wccmd-rmk}.\ref{wccmd-rmk:def}).
	This action is clearly functorial in $H$ and $C$,
	and it is so in $\zeta$ since $\theta$ is defined by a universal property.
	
	It is clear from~\eqref{cmpct2wccmd-theta} that $\theta_E$ is invertible if (and only) $\zeta_E$ is invertible.

	To conclude the construction,
	we show that a 2-cell
	$(\psi,\phi) \colon (B_1,H_1,\zeta^1) \Rightarrow (B_2,H_2,\zeta^2)$ in $\compcatCat$ is also a 2-cell
	$(\psi,\phi)\colon(B_1,H_1,\theta^1) \Rightarrow (B_2,H_2,\theta^2)$ in $\wccmdCat$.
	As $(\psi,\phi)$ is, in particular, a 2-cell in $\fibCat$,
	it only remains to check that
	$K_{\chi'}\phi \circ \theta^1 = \theta^2 \circ \phi K_{\chi}$.
	This amounts to verifying that, for every $E$ over $X$,
	the left-hand square in the left-hand diagram below commutes.
	\[\begin{tikzcd}[row sep=1.2em]
		{H_1K_{\chi}E} &&&& {B_1X_E} \\
		& {K_{\chi'}H_1E} & {H_1E} &&& {X'_{H_1E}} & {B_1X} \\
		{H_2K_{\chi}E} &&&& {B_2X_E} \\
		& {K_{\chi'}H_2E} & {H_2E} &&& {X'_{H_2E}} & {B_2X} \\
		&& {} && {}
		\arrow["{\epsilon'_{H_1E}}"', from=2-2, to=2-3]
		\arrow["{\theta^1_E}"', from=1-1, to=2-2]
		\arrow["{H_1\epsilon_E}"{pos=0.7}, curve={height=-12pt}, from=1-1, to=2-3]
		\arrow["{\phi_E}"{pos=0.4}, from=2-3, to=4-3]
		\arrow["{\epsilon'_{H_2E}}"', from=4-2, to=4-3]
		\arrow["{\phi_{K_{\chi}E}}"', from=1-1, to=3-1]
		\arrow["{\theta^2_E}"', from=3-1, to=4-2]
		\arrow["{H_2\epsilon_E}"{description, pos=0.8}, curve={height=-12pt}, from=3-1, to=4-3]
		\arrow["{K_{\chi'}\phi_E}"{description, pos=0.39}, from=2-2, to=4-2, crossing over]
		\arrow["{\zeta^1_E}"', from=1-5, to=2-6]
		\arrow["{\chi'_{H_1E}}"', from=2-6, to=2-7]
		\arrow["{B_1\chi_E}"{pos=0.8}, curve={height=-12pt}, from=1-5, to=2-7]
		\arrow["{\psi_{X_E}}"', from=1-5, to=3-5]
		\arrow["{\zeta^2_E}"', from=3-5, to=4-6]
		\arrow["{\chi'_{H_2E}}"', from=4-6, to=4-7]
		\arrow["{B_2\chi_E}"{description, pos=0.8}, curve={height=-12pt}, from=3-5, to=4-7]
		\arrow[from=2-6, to=4-6, crossing over]
		\arrow["{\psi_X}", from=2-7, to=4-7]
		\arrow["{p'}", maps to, from=5-3, to=5-5]
	\end{tikzcd}\]
	But this follows from the fact that $\epsilon'_{H_2E}$ is cartesian
	once we show that the other faces and the right-hand diagram commute.
	The right-hand diagram commutes by \ref{cmpcat-2cell-sq},
	the two triangles commute by definition of $\theta$~\eqref{cmpct2wccmd-theta}, and
	the back and front squares by naturality of $\phi$ and $\epsilon'$, respectively.
	Functoriality is trivial.
\end{proof}

\begin{remark}\label{cmpct2wccmd-nostr}
Note that the 2-functor $\cmpcttowccmd$ does not necessarily map a strict morphism of comprehension categories to a strict morphisms of \wccmd[s].
Indeed, it is clear from the definition of $\theta$ in~\eqref{cmpct2wccmd-theta}
that, if $\zeta$ is an identity, $\theta$ is only forced to be a vertical iso.
\end{remark}

\begin{lemma}\label{wccmd2cmpct}
There is a 2-functor $\wccmdtocmpct \colon \wccmdCat \to \compcatCat$,
which restricts to the wide 2-full sub-2-categories on the pseudo and invertible morphisms.
\end{lemma}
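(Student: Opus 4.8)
The plan is to run the construction of Lemma~\ref{cmpct2wccmd} backwards: from a \wccmd{} recover a comprehension by reading the counit through the fibration. Concretely, given a \wccmd{} $(p,K)$ on $p\colon\ctg{E}\to\ctg{B}$ with counit $\epsilon$, I would set $\chi E := p\epsilon_E$, viewed as the object $pKE\to pE$ of $\ctg{B}\due$, and on a map $f\colon E\to E'$ take $\chi f$ to be the image under $p$ of the naturality square of $\epsilon$. Since $\cod(p\epsilon_E)=pE$ and the bottom edge of that square is $pf$, we get $\cod\circ\chi=p$, so $(p,\chi)$ is a morphism of functors over $\ctg{B}$. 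The one non-formal point is that $\chi$ must send cartesian arrows to pullbacks, and this is precisely condition~(\ref{wccmd-def}.\ref{wccmd-def:pb}): for cartesian $f$ the $p$-image of the naturality square of $\epsilon$ is a pullback. Thus $\wccmdtocmpct(p,K):=(p,\chi)$ is a comprehension category; note that, unlike the opposite direction, no cleavage is needed here since $\chi$ is defined directly from $\epsilon$.

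For $1$-cells, given $(C,H,\theta)\colon(p,K)\to(p',K')$ I would keep the underlying $\fibCat$-cell $(C,H)$ and define the comparison of the resulting lax morphism of comprehension categories by $\zeta_E := p'\theta_E$, read as the domain component of a square in $\ctg{B}\pdue$ whose codomain component is $\id_{p'HE}$, so that $\cod\zeta=\ctgid{Cp}$. The triangle of~\eqref{cmpct-morph-triang} then reads $p'\epsilon'_{HE}\circ p'\theta_E = p'H\epsilon_E$, which is the counit axiom $\epsilon'_{HE}\circ\theta_E=H\epsilon_E$ of the comonad morphism pushed through $p'$; naturality of $\zeta$ is naturality of $\theta$ pushed through $p'$. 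Since $\zeta_E=p'\theta_E$, it is invertible whenever $\theta_E$ is, which yields the claimed restriction to the pseudo (and invertible) morphisms.

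On $2$-cells I would send $(\psi,\phi)$ to itself. To check this is a $\compcatCat$-$2$-cell I would use the reformulation of~\ref{cmpcat-2cell-sq}: with $\zeta^i_E=p'\theta^i_E$ and with the front square being $\chi'(\phi_E)$, whose domain component is $p'K'\phi_E$, the required top square becomes $p'\theta^2_E\circ\psi_{pKE}=p'K'\phi_E\circ p'\theta^1_E$. The right-hand side is the comonad $2$-cell identity $(K'\phi)\theta_1=\theta_2(\phi K)$ (from~\ref{def:cmdCat}) evaluated at $E$ and pushed through $p'$, giving $p'\theta^2_E\circ p'\phi_{KE}$; the two sides then agree because $(\psi,\phi)$ is a $\fibCat$-$2$-cell, so $p'\phi=\psi p$ and hence $\psi_{pKE}=p'\phi_{KE}$. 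This is the one place where the fibrational and comonadic halves of a \wccmd-morphism must be combined, and I expect it to be the main—though still routine—bookkeeping obstacle.

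It remains to verify functoriality. Identities are immediate. For composites, the comonad-morphism composite has comparison $(\theta_2H_1)(H_2\theta_1)$, and applying $p''$ gives $p''\theta_{2,H_1E}\circ p''H_2\theta_{1,E}$; I would match this against the comprehension composite $(\zeta_2H_1)(B_2\due\zeta_1)$ by observing that the domain component of $B_2\due\zeta_{1,E}$ is $C_2p'\theta_{1,E}=p''H_2\theta_{1,E}$ via $p''H_2=C_2p'$. The two composites coincide, so $\wccmdtocmpct$ is a $2$-functor, and all the verifications above manifestly restrict to the wide $2$-full sub-$2$-categories on the pseudo and invertible morphisms.
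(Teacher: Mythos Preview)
Your proposal is correct and follows essentially the same approach as the paper: define $\chi E := p\epsilon_E$, set $\zeta_E := p'\theta_E$, and keep 2-cells unchanged. You actually fill in more detail than the paper does---in particular your verification of the 2-cell condition via the comonad 2-cell identity $(K'\phi)\theta_1=\theta_2(\phi K)$ together with $p'\phi=\psi p$, and your explicit check that composites are preserved, are spelled out where the paper simply declares these ``straightforward'' and ``clear''.
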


\begin{proof}
On objects, it suffices to map a pair $(p\colon \mathcal{E}\to \ctg{B},K)$ to $\chi\colon \mathcal{E}\to\ctg{B}\due$, $\chi (E) := p\epsilon_E$.

To define its action on a 1-cell $(C,H,\theta)$, we use $\theta$ to induce a suitable
$\zeta \colon C\due\chi \Rightarrow \chi' H$ as follows:
\[\begin{tikzcd}
	CpKE && CpE \\
	{p'HKE} \\
	{p'K'HE} && {p'HE}
	\arrow["{Cp\epsilon_E}", from=1-1, to=1-3]
	\arrow["{p'\epsilon'_{HE}}", from=3-1, to=3-3]
	\arrow["{p'\theta_E}"', from=2-1, to=3-1]
	\arrow["{\id}"', from=1-1, to=2-1]
	\arrow["{\id}", from=1-3, to=3-3]
\end{tikzcd}\]
on the top row we read $C\due\chi_E = Cp\epsilon_E$, on the bottom $\chi'H = p'\epsilon'_{HE}$,
and the square commutes because, by hypothesis, $Cp=p'H$.
It follows, thanks also to~\eqref{cmpct-morph-triang},
that we can define $\zeta_E := p'\theta_E$.
With this definition, proving that
a 2-cell $(\psi,\phi)$ in $\wccmdCat$ is also a 2-cell in $\compcatCat$ is straightforward
using that the $\theta$'s commutes with the counits.
Functoriality is clear.
\end{proof}

Jacobs proves in~\citey[Theorem~9.3.4]{jacobs1999categorical} that \wccmd[s] are in bijection with comprehension categories.
We extend that result to lax morphisms.

\begin{theorem}\label{eqv-wccmd-cmpct}
The two 2-functors $\wccmdtocmpct \colon \wccmdCat \leftrightarrows \compcatCat \,\colon\! \cmpcttowccmd$
give rise to an adjoint biequivalence such that $\wccmdtocmpct \circ \cmpcttowccmd = \Id$.

The biequivalence restricts to the wide 2-full sub-2-categories on the pseudo morphisms.
\end{theorem}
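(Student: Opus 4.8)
The plan is to upgrade Jacobs' object-level bijection to a 2-categorical statement by pairing the two 2-functors of~\ref{cmpct2wccmd} and~\ref{wccmd2cmpct} into an adjoint biequivalence whose unit is an identity. First I would establish the strict equation $\wccmdtocmpct \circ \cmpcttowccmd = \Id_{\compcatCat}$. On a comprehension category $(p,\chi)$ the composite produces the comprehension $E \mapsto p\epsilon_E$ with $\epsilon_E = \overline{\chi_E}$, and since $p\overline{\chi_E} = \chi_E$ by the very definition of the reindexing in~\eqref{cmpct2wccmd-cmd}, we recover $\chi$ on the nose. On a lax morphism $(B,H,\zeta)$, the induced $\theta$ of~\eqref{cmpct2wccmd-theta} satisfies $p'\theta_E = \zeta_E$, and $\wccmdtocmpct$ sends $(B,H,\theta)$ back to the morphism with $\zeta'_E := p'\theta_E = \zeta_E$; both 2-functors fix the underlying 2-cells $(\psi,\phi)$. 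Hence the composite is literally the identity, so I take the unit of the biadjunction to be $\id$.

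Next I would construct the comparison $\cmpcttowccmd \circ \wccmdtocmpct \cong \Id_{\wccmdCat}$. Given a \wccmd{} $(p,K)$, applying $\wccmdtocmpct$ yields $\chi_E := p\epsilon_E$ and then $\cmpcttowccmd$ replaces $K$ by the comonad $K_\chi$ whose value $K_\chi E$ is the chosen cartesian lift of $p\epsilon_E$ at $E$. Because the counit $\epsilon_E \colon KE \to E$ is itself $p$-cartesian over $p\epsilon_E$ by~(\ref{wccmd-def}.\ref{wccmd-def:car}), the universal property of cartesian lifts supplies a unique vertical iso $\sigma_E \colon K_\chi E \to KE$ with $\epsilon_E \circ \sigma_E = \overline{\chi_E}$. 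Naturality of $\sigma \colon K_\chi \natiso K$ in $E$ comes from functoriality of $K_\chi$ on arrows, compatibility with the counits holds by this defining equation, and compatibility with the comultiplications is automatic since these are determined by the counits via~(\ref{wccmd-rmk}.\ref{wccmd-rmk:def}). Thus each $\sigma$ is an invertible morphism $(\id,\id,\sigma)$ of \wccmd[s], hence an equivalence 1-cell in $\wccmdCat$.

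I would then check that $\sigma$ is pseudonatural in $(p,K)$: for a 1-cell $(C,H,\theta)$ the required square of \wccmd-morphisms again reduces to comparing cartesian lifts over the same base arrow, so it follows from their universal property. Taking the counit to be $\sigma$, both triangle identities in fact hold strictly: on a \wccmd{} $K_\chi$ in the image of $\cmpcttowccmd$ the two cartesian lifts coincide (same cleavage, same base map), so $\sigma_{(p,K_\chi)} = \id$, while $\wccmdtocmpct(\sigma)$ has base and total components the identity and $\zeta_E = p'\sigma_E = \id$ since $\sigma_E$ is vertical. This yields the adjoint biequivalence with $\wccmdtocmpct \circ \cmpcttowccmd = \Id$. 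Finally, since every $\sigma_E$ is a vertical iso the comparison lives in $\wccmdpsCat$, and as $\cmpcttowccmd$ and $\wccmdtocmpct$ restrict to pseudo morphisms by~\ref{cmpct2wccmd} and~\ref{wccmd2cmpct}, the biequivalence restricts to the sub-2-categories on pseudo morphisms.

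The main obstacle I expect is the pseudonaturality of $\sigma$ together with the bookkeeping that keeps the comparison coherent: although each component is forced by a universal property, assembling these cleavage-dependent vertical isos into a genuine pseudonatural equivalence is where care is needed, and it is precisely the cleavage-dependent choice of cartesian lift that makes $\cmpcttowccmd\circ\wccmdtocmpct$ only isomorphic to, rather than equal to, the identity. Everything else is a direct chase of the universal properties of cartesian lifts through the diagrams of~\eqref{cmpct2wccmd-theta} and~\ref{wccmd2cmpct}.
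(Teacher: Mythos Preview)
Your proposal is correct and follows essentially the same route as the paper: you establish $\wccmdtocmpct\circ\cmpcttowccmd=\Id$ via $p\overline{\chi_E}=\chi_E$ and $p'\theta_E=\zeta_E$, build the comparison $\cmpcttowccmd\wccmdtocmpct\natiso\Id$ from the unique vertical iso between the two cartesian lifts of $p\epsilon_E$, and derive the triangle identities from verticality and the coincidence of chosen lifts. The paper is terser (it bundles your naturality-in-$E$ and your pseudonaturality-in-$(p,K)$ into a single appeal to uniqueness of the vertical isos, and in fact obtains strict 2-naturality rather than merely pseudonaturality), but the argument is the same.
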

\begin{proof}
We have to show that $\wccmdtocmpct \circ \cmpcttowccmd = \Id$ and that there is a natural iso
$\boldsymbol{\xi}\colon\cmpcttowccmd\wccmdtocmpct\natiso\Id$ such that
\[
\wccmdtocmpct \boldsymbol{\xi} = \idd_{\wccmdtocmpct} \tand \boldsymbol{\xi} \cmpcttowccmd = \idd_{\cmpcttowccmd}.
\]

The first equation follows from
$p(\overline{\chi_E}) = \chi_E$ and
$p(\theta_E) = \zeta_E$,
which hold by constructions~\eqref{cmpct2wccmd-cmd} and~\eqref{cmpct2wccmd-theta}, respectively,
and the fact that both 2-functors fix the 2-cells.

To obtain the natural iso $\boldsymbol{\xi}$ recall that,
in $\cmpcttowccmd \wccmdtocmpct(p,K,\epsilon)$,
the counit at $E$ is defined as the (chosen) cartesian lift of $p\epsilon_E$~\eqref{cmpct2wccmd-cmd}.
As $\epsilon_E$ is also cartesian over $p\epsilon_E$ and into $E$,
it follows that there is a unique vertical invertible arrow $\xi'_E$ between them.
The component $\boldsymbol{\xi}_{(p,K)}$ is then the (invertible) morphism of comonads 
$(\Id,\Id,\xi')$.
Naturality is ensured by the uniqueness of these vertical isos $\xi'$.
The first equation is then clear.
The second one holds since,
if $(p,K)$ is in the image of $\cmpcttowccmd$,
also $\epsilon_E$ is a chosen cartesian lift of $p$.
As $\wccmdtocmpct$ and $\cmpcttowccmd$ fix $p$,
it is the same (chosen) cartesian lift
as the one in $\cmpcttowccmd \wccmdtocmpct(p,K)$.
\end{proof}

\subsection{The biequivalence between \wccmd[s] and \gencwf{ies}}
\label{wccmd_v_jdtt}

Here we use the two adjunctions from Section~\ref{sec:cmd-adj}.

\begin{theorem}\label{thm:wccmd-jdtt}\label{eqv-wccmd-jdtt}\label{thm:cmdeqjdtts}
The biadjunction in~\eqref{thm:cmd-adjloo} lifts to an adjoint biequivalence
$\jdttpstowccmd \equiv \wccmdtojdttps$ on the left-hand below 
whose counit components are identities.
In particular, $\jdttpstowccmd \circ \wccmdtojdttps =\Id$.

The biequivalence restricts along $\jdttCat \hookrightarrow \jdttlCat$
to a 2-reflection $\jdtttowccmd \dashv \wccmdtojdtt$ on the right-hand below,
lifting the 2-adjunction in~\eqref{thm:cmd-adj}.
\[\begin{tikzcd}[column sep=3em]
\wccmdCat	\ar[r,bend right=1.3em,"\wccmdtojdttps"'] \ar[r,phantom,"\equiv"{description}]
&	\jdttlCat	\ar[l,bend right=1.3em,"\jdttpstowccmd"']
\end{tikzcd}
\hspace{5em}
\begin{tikzcd}[column sep=3em]
\wccmdCat	\ar[r,bend right=1.3em,"\wccmdtojdtt"'] \ar[r,phantom,"\adj{-90}"{description}]
&	\jdttCat	\ar[l,bend right=1.3em,"\jdtttowccmd"']
\end{tikzcd}\]

The biequivalence also restricts to the wide 2-full sub-2-categories on the pseudo morphisms,
and the 2-reflection restricts to the wide 2-full sub-2-categories on the pseudo and on the strict morphisms.
\[\begin{tikzcd}[column sep=3em]
\wccmdpsCat	\ar[r,bend right=1.3em,"\wccmdtojdttps"'] \ar[r,phantom,"\equiv"{description}]
&	\jdttlpsCat	\ar[l,bend right=1.3em,"\jdttpstowccmd"']
\end{tikzcd}\]
\vspace{-1em}
\[\begin{tikzcd}[column sep=3em]
\wccmdpsCat	\ar[r,bend right=1.3em,"\wccmdtojdtt"'] \ar[r,phantom,"\adj{-90}"{description}]
&	\jdttpsCat	\ar[l,bend right=1.3em,"\jdtttowccmd"']
\end{tikzcd}
\hspace{8em}
\begin{tikzcd}[column sep=3em]
\wccmdstrCat	\ar[r,bend right=1.3em,"\wccmdtojdtt"'] \ar[r,phantom,"\adj{-90}"{description}]
&	\jdttstrCat	\ar[l,bend right=1.3em,"\jdtttowccmd"']
\end{tikzcd}\]

\vspace{1em}
\end{theorem}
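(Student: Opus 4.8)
The plan is to lift the reflections of Section~\ref{sec:cmd-adj} along the forgetful 2-functors to $\fibCat$, exploiting that $\wccmdCat$ and $\jdttlCat$ are ``pullbacks'' of $\fibCat$ with $\cmdCat$ and with $\adjLlCat$, respectively. Concretely, a 2-functor into or out of either 2-category is determined by a $\fibCat$-component together with the comonad/adjunction assignment, so well-definedness reduces to checking that the constructions of $\cmdtoadjloo$ and $\adjlootocmd$ from Theorem~\ref{thm:cmd-adjloo} respect the \wccmd{} and \gcwf{} axioms.

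First I would define $\wccmdtojdttps$ on a \wccmd{} $(p,K)$ by applying $\cmdtoadjloo$ fibrewise: set $u:=p$, $\duu:=\coal(K)$, $\Sigma:=\emfrg{K}$, $\Delta:=\emrgt{K}$ and $\du:=p\,\emfrg{K}$. The verifications are that $\du$ is a fibration with $\emfrg{K}$ cartesian over $\ctg{B}$---obtained by lifting an arrow of $\ctg{B}$ in $p$ and transporting the (cartesian, by~\ref{wccmd-coalg}) coalgebra structure along it via the pullback axiom~(\ref{wccmd-def}.\ref{wccmd-def:pb})---and that the Eilenberg--Moore unit and counit are cartesian, which is exactly axioms~(\ref{wccmd-def}.\ref{wccmd-def:car}) and~(\ref{wccmd-def}.\ref{wccmd-def:pb}). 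Dually, $\jdttpstowccmd$ applies $\adjlootocmd$, sending a \gcwf{} to the comonad $\Sigma\Delta$ on $\uu$ over $u$; that this is a \wccmd{} follows from cartesianness of the counit together with Lemma~\ref{deltacartesian}, which yields the pullback axiom. On $1$- and $2$-cells both functors carry the $\fibCat$-data unchanged and act by $\cmdtoadjloo$, $\adjlootocmd$ on the comonad/adjunction part.

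The identity counit $\jdttpstowccmd\circ\wccmdtojdttps=\Id_{\wccmdCat}$ is then immediate from $\adjlootocmd\circ\cmdtoadjloo=\Id_{\cmdCat}$, the $\fibCat$-components being fixed. Full faithfulness of $\wccmdtojdttps$ on hom-categories follows from that of $\cmdtoadjloo$ (Theorem~\ref{thm:cmd-adjloo}): a loose \gcwf{} morphism between two Eilenberg--Moore \gcwfs{} is precisely a left loose morphism of the EM adjunctions compatible with the fibrations, hence a lax \wccmd{} morphism, and similarly for $2$-cells. It remains to show $\wccmdtojdttps$ is essentially surjective on $0$-cells, equivalently that for each \gcwf{} the comparison functor $\emcmp{\Sigma,\Delta}\colon\duu\to\coal(\Sigma\Delta)$ of Section~\ref{sec:cmd-adj:unit} is an equivalence over $\ctg{B}$. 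Its full faithfulness is exactly Lemma~\ref{jdtt-leftfaith-iso}, since a coalgebra morphism $(\Sigma a,\Sigma\eta_a)\to(\Sigma b,\Sigma\eta_b)$ is a map $f$ with $(\Sigma\eta_b)f=(\Sigma\Delta f)(\Sigma\eta_a)$. For essential surjectivity I would send a coalgebra $(A,\alpha)$ to the domain $a$ of the $\du$-cartesian lift $\bar\alpha\colon a\to\Delta A$ of $u\alpha$ (a section of $u\epsilon_A$ by the counit law), and exhibit $\emcmp{\Sigma,\Delta}(a)\cong(A,\alpha)$ by comparing the cartesian arrow $\Sigma\bar\alpha$ with $\alpha$ and $\epsilon_A$, using the coalgebra laws and the uniqueness clause of cartesianness to verify that both composites are identities.

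Once the unit is componentwise an equivalence, the pseudonatural transformation $\hat\zeta$ of Remark~\ref{rem:liftunit} upgrades to a pseudonatural equivalence, turning the bireflection of Theorem~\ref{thm:cmd-adjloo} into an adjoint biequivalence; the triangle identities descend since the $\fibCat$-components are identities. Restricting along $\jdttCat\hookrightarrow\jdttlCat$ (morphisms with $\zeta=\id$) lifts Theorem~\ref{thm:cmd-adj} instead: $\wccmdtojdtt$ is the restriction of $\wccmdtojdttps$ and stays fully faithful, but the unit need no longer be invertible, whence only the 2-reflection $\jdtttowccmd\dashv\wccmdtojdtt$. The restrictions to the pseudo and strict sub-2-categories follow from the corresponding restrictions of $\cmdtoadjloo$ and $\adjlootocmd$ recorded in Theorems~\ref{thm:cmd-adj},~\ref{thm:cmd-adjloo} and Corollary~\ref{ext-loose}. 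The main obstacle is the essential-surjectivity step: it is exactly here that the cartesian hypotheses of a \gcwf{}---inert at the bare 2-categorical level, where $\emcmp{L,R}$ is far from an equivalence---are used to promote the bireflection to a biequivalence.
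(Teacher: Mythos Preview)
Your approach is essentially the same as the paper's: both lift the (bi)reflection of Section~\ref{sec:cmd-adj} by checking that the Eilenberg--Moore and induced-comonad constructions respect the additional fibrational data, and both establish the biequivalence by showing that the comparison functor $\emcmp{\Sigma,\Delta}$ is an equivalence. Your appeal to Lemma~\ref{jdtt-leftfaith-iso} for full faithfulness of $\emcmp{\Sigma,\Delta}$ is a clean conceptual shortcut; the paper instead packages everything into an explicit adjoint inverse $\eminvcmp{\Sigma,\Delta}$ in Lemma~\ref{lem:inverse_to_comparison}---sending a coalgebra $(A,\alpha)$ to the domain of the $\du$-cartesian lift of $u\alpha$ into $\Delta A$, exactly your candidate---and checks the triangle identities and verticality of the witnessing isos by hand.

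Two points need more care. First, Lemma~\ref{deltacartesian} alone does not give the pullback axiom~(\ref{wccmd-def}.\ref{wccmd-def:pb}) for the comonad $\Sigma\Delta$: that $\Delta$ preserves cartesian arrows controls the square in the total category, not that its image in $\ctg{B}$ is a pullback. The paper proves the latter separately in Lemma~\ref{lem:jdtt2wccmd}, by transposing along $\Sigma\dashv\Delta$ and exploiting cartesianness of both unit and counit; this is where the \gcwf{} axioms genuinely do work beyond what Lemma~\ref{deltacartesian} records. Second, the restriction of the biequivalence to pseudo morphisms does not follow merely from the underlying restrictions of $\cmdtoadjloo$ and $\adjlootocmd$: one must also check that the \emph{weak inverse} of each unit component is a pseudo loose \gcwf{} morphism, i.e.\ that the mate of its $\zeta$-component is invertible. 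The paper verifies this in the final clause of Lemma~\ref{lem:inverse_to_comparison}, computing that mate as $(\Delta\epsilon_A)\circ\overline{u\nu_A}$ and observing it is cartesian over an identity, hence a vertical iso.
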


The rest of the section is devoted to the proof of Theorem~\ref{thm:wccmd-jdtt}.
We begin with two lemmas ensuring that the 2-functors
$\cmdtoadj$ and $\adjtocmd$ lift to 2-functors
between $\jdttCat$ and $\wccmdCat$.
We begin with $\cmdtoadj$.

\begin{lemma}\label{lem:wccmd2jdtt}
\hfill
\begin{enumerate}
\item\label{lem:wccmd2jdtt:obj}
If $(p,K,\epsilon,\nu)$ is a \wccmd,
then $p\emfrg{K} \colon \coal(K) \to \mathcal{B}$ is a fibration.
\item\label{lem:wccmd2jdtt:mor}
If $(B,H,\theta) \colon (p,K,\epsilon,\nu) \to (p',K',\epsilon',\nu')$
is a lax morphism of \wccmd[s],
then $(B,\coal(H,\theta)) \colon p\emfrg{K} \to p'\emfrg{K'}$
is a morphism of fibrations.
\end{enumerate}
\end{lemma}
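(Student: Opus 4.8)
The plan is to reduce cartesianness over $p\emfrg{K}$ to cartesianness over $p$, using that the counit $\epsilon$ of a \wccmd{} is cartesian and that its naturality squares at cartesian arrows are pullbacks. For~(\ref{lem:wccmd2jdtt}.\ref{lem:wccmd2jdtt:obj}), given a coalgebra $(B,b)$ and an arrow $u\colon I\to pB$ in $\ctg{B}$, I would take a $p$-cartesian lift $\bar u\colon A\to B$ of $u$ supplied by the fibration $p$, and then equip $A$ with a coalgebra structure making $\bar u$ a $p\emfrg{K}$-cartesian coalgebra morphism. Since $\bar u$ is cartesian, by~(\ref{wccmd-rmk}.\ref{wccmd-rmk:pb'}) the naturality square of $\epsilon$ at $\bar u$ exhibits $KA$ as the pullback of $\epsilon_B$ along $\bar u$; as $\epsilon_B b=\id_B$ by the counit law of $(B,b)$, the cone with legs $\id_A\colon A\to A$ and $b\bar u\colon A\to KB$ induces a unique $a\colon A\to KA$ with $\epsilon_A a=\id_A$ and $K\bar u\circ a=b\bar u$. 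The first equation is the counit law, so by~(\ref{wccmd-rmk}.\ref{wccmd-rmk:def}) the copointed structure $a$ is automatically a $K$-coalgebra; the second says precisely that $\bar u\colon(A,a)\to(B,b)$ is a coalgebra morphism.

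Next I would check that $\bar u$ is $p\emfrg{K}$-cartesian. Given a coalgebra morphism $k\colon(C,c)\to(B,b)$ and $w\colon pC\to pA$ with $p\bar u\circ w=pk$, the $p$-cartesianness of $\bar u$ in $\ctg{E}$ produces a unique $h\colon C\to A$ over $w$ with $\bar u h=k$, and it remains to see that $h$ is a coalgebra morphism, \ie $Kh\circ c=a\circ h$. Both sides are maps $C\to KA$, so by the pullback property it suffices to compare them after the two legs $\epsilon_A$ and $K\bar u$: post-composing with $\epsilon_A$ they agree by naturality of $\epsilon$ and the counit laws of $(C,c)$ and $(A,a)$, while post-composing with $K\bar u$ they agree by the coalgebra-morphism equations for $k$ and for $\bar u$ together with $\bar u h=k$. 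Uniqueness of $h$ as a coalgebra morphism is inherited from its uniqueness in $\ctg{E}$.

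The observation that makes~(\ref{lem:wccmd2jdtt}.\ref{lem:wccmd2jdtt:mor}) immediate is that a coalgebra morphism $g$ is $p\emfrg{K}$-cartesian if and only if its underlying arrow $\emfrg{K}g$ is $p$-cartesian. The ``if'' direction is exactly the computation of the previous paragraph with $g$ in place of $\bar u$, since~(\ref{wccmd-rmk}.\ref{wccmd-rmk:pb'}) applies to any cartesian arrow; the ``only if'' direction follows by comparing $g$ with the $p\emfrg{K}$-cartesian coalgebra lift of $p\emfrg{K}g$ constructed in~(\ref{lem:wccmd2jdtt}.\ref{lem:wccmd2jdtt:obj}), as the two are then connected by a vertical iso in $\coal(K)$ which $\emfrg{K}$ sends to a vertical iso in $\ctg{E}$ exhibiting $\emfrg{K}g$ as $p$-cartesian. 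Granting this, part~(\ref{lem:wccmd2jdtt}.\ref{lem:wccmd2jdtt:mor}) is quick: the square commutes because $\emfrg{K'}\coal(H,\theta)=H\emfrg{K}$ and $p'H=Bp$, and if $g$ is $p\emfrg{K}$-cartesian then $\emfrg{K}g$ is $p$-cartesian, whence $\emfrg{K'}\coal(H,\theta)(g)=H\emfrg{K}g$ is $p'$-cartesian as $H$ preserves cartesian arrows, whence $\coal(H,\theta)(g)$ is $p'\emfrg{K'}$-cartesian by the characterisation.

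The main obstacle I anticipate is the verification that the induced $h$ is a coalgebra morphism in part~(\ref{lem:wccmd2jdtt}.\ref{lem:wccmd2jdtt:obj}): this is the one step where the pullback property of the naturality squares, and not merely cartesianness of $\epsilon$, is genuinely needed, and it requires keeping the two leg-wise comparisons straight. Everything else is bookkeeping with the defining equations of comonad coalgebras and of morphisms of fibrations.
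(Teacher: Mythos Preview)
Your proposal is correct and follows essentially the same approach as the paper: construct the cartesian lift in $\coal(K)$ by equipping the chosen $p$-cartesian lift with a coalgebra structure via the pullback of~(\ref{wccmd-rmk}.\ref{wccmd-rmk:pb'}), and deduce part~(\ref{lem:wccmd2jdtt}.\ref{lem:wccmd2jdtt:mor}) from the fact that $p\emfrg{K}$-cartesian coalgebra morphisms are exactly those whose underlying arrow is $p$-cartesian. The paper is more terse---it asserts that producing the coalgebra structure $e\sigma$ making the square commute ``is enough'' for a cartesian lift, and for part~(\ref{lem:wccmd2jdtt}.\ref{lem:wccmd2jdtt:mor}) invokes only ``naturality of $\theta$''---whereas you spell out the verification that the induced factorisation $h$ is a coalgebra morphism via the two pullback legs, and make the cartesian characterisation explicit; this is a genuine improvement in detail but not a different argument.
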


\begin{proof}
\eqref{lem:wccmd2jdtt:obj}.
Consider the Eilenberg--Moore adjunction associated to $(K,\epsilon,\nu)$
\[\begin{tikzcd}
{\coal(K)} && {\ctg{E}}
\arrow[""{name=0, anchor=center, inner sep=0}, "\emrgt{K}"', shift right=2, from=1-3, to=1-1]
\arrow[""{name=1, anchor=center, inner sep=0}, "\emfrg{K}"', shift right=2, from=1-1, to=1-3]
\arrow["\dashv"{anchor=center, rotate=90}, draw=none, from=1, to=0]
\end{tikzcd}\]
and let $e\colon E \car KE$ a coalgebra, $\sigma \colon X \to pE$
and $s\colon E\sigma \car E$ a $p$-cartesian  lift of $\sigma$.
To have a cartesian lift of $\sigma$ at $e$ in $\coal(K)$,
it is enough to find an arrow $e\sigma$ which is a coalgebra
and such that the left-hand square in~\eqref{lem:wccmd2jdtt:obj:lift} commutes.
\begin{equation}\label{lem:wccmd2jdtt:obj:lift}
\begin{tikzcd}[ampersand replacement=\&]
E\sigma \& E \&\& {K(E\sigma)} \& {K(E)} \\
{K(E\sigma)} \& {K(E)} \&\& E\sigma \& E
\arrow["s"', from=1-1, to=1-2]
\arrow["{K(s)}"', from=2-1, to=2-2]
\arrow["e", from=1-2, to=2-2]
\arrow["{e\sigma}"',dashed, from=1-1, to=2-1]
\arrow["{K(s)}"', from=1-4, to=1-5]
\arrow["{\epsilon_E}", from=1-5, to=2-5]
\arrow["{\epsilon_{E\sigma}}"', from=1-4, to=2-4]
\arrow["s"', from=2-4, to=2-5]
\end{tikzcd}
\end{equation}

The right-hand square in~\eqref{lem:wccmd2jdtt:obj:lift}
is a pullback by~(\ref{wccmd-rmk}.\ref{wccmd-rmk:pb'}),
therefore the span
\[\begin{tikzcd}
E\sigma & E\sigma & E
\arrow["{ e\circ s}", from=1-2, to=1-3]
\arrow["id"', from=1-2, to=1-1]
\end{tikzcd}\]
induces a (unique) section $e\sigma$ of $\epsilon_{E\sigma}$
which makes the left-hand square in~\eqref{lem:wccmd2jdtt:obj:lift} commute.
It is a coalgebra by~(\ref{wccmd-rmk}.\ref{wccmd-rmk:def}).

\eqref{lem:wccmd2jdtt:mor}.
We have
$p'\emfrg{K'}\coal(H,\theta) = p'H\emfrg{K} = B p\emfrg{K}$,
and $\coal(H,\theta)$ preserves cartesian arrows by naturality of $\theta$.
\end{proof}

\begin{remark}\label{wccmd2jdtt:split}
The proof of (\ref{lem:wccmd2jdtt}.\ref{lem:wccmd2jdtt:obj}) above shows in particular that a cleavage for $p$ induces a cleavage for $p\emfrg{K}$.
It is clear that $\emfrg{K}$ maps one to the other.
It is also easy to see, using functoriality of $K$, that a split cleavage induces a split cleavage.

If $(C,H,\theta)$ is a morphism of \wccmd[s] such that $(C,H)$ preserves the cleavage, then so does $(C,\coal(H,\theta))$.
For this, it is enough to show that $\theta_{E\sigma} \circ He\sigma$ equals the chosen reindexing of $\theta_E \circ He$ over $C\sigma$.
Since $(C,H)$ preserves the cleavage,
the latter is the (unique) dashed arrow making the square below commute
\[\begin{tikzcd}
H(E\sigma)	\ar[d,dashed] \ar[r,"{H(e\sigma)}"]
&	HE	\ar[d,"{\theta_E \circ He}"]
\\
K'H(E\sigma)	\ar[r,"{K'H(e\sigma)}"]	&	K'HE
\end{tikzcd}\]
The claim follows from the fact that $\theta_{E\sigma} \circ H(e\sigma)$ also makes that square commute.
\end{remark}

\begin{corollary}\label{cor:wccmd2jdtt}
The 2-functor $\cmdtoadjloo \colon \cmdCat \to \adjLlCat$ lifts
to a 2-functor
\[\begin{tikzcd}[column sep=10em, row sep=1ex]
\wccmdCat	\ar[r,"\wccmdtojdttps"]	&	\jdttlCat
\\
(p,K,\epsilon,\nu)	\ar[d,"{(C,H,\theta)}"{swap,name=ddom}, bend right=3em, shift right=2, "\qquad"] 	\ar[d,"{(C',H',\theta')}"{name=dom,right}, bend left=3em, shift left=2,"\qquad"] \arrow[Rightarrow,from=ddom,to=dom,shorten=1mm, "{(\gamma,\phi)}"]
&	(p,p\emfrg{K},\cmdtoadjloo(K,\epsilon,\nu))	\ar[d,"{(C,H,\coal(H,\theta))}"{swap,name=cod,left},"\qquad",bend right=3em,shift right=6.5]
\ar[d,"{(C',H',\coal(H',\theta))}"{name=ccod},"\qquad",bend left=3em, shift left=6.5] \arrow[Rightarrow,from=cod,to=ccod,shorten=1mm, "{(\gamma,\phi,\coal(\phi))}", crossing over]
\\[2em]
(p',K',\epsilon',\nu')	&	(p',p'\emfrg{K'},\cmdtoadjloo(K',\epsilon',\nu'))
\ar[from=dom, to=cod,mapsto]
\end{tikzcd}\]
This 2-functor restricts to 2-functors between the wide 2-full sub-2-categories on the pseudo and strict morphisms.
\end{corollary}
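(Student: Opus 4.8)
The plan is to exploit the description of $\jdttlCat$ as a 2-pullback, over $\Cat\times\Cat\times\Cat$, of two copies of $\fibCat$ and one of $\adjLlCat$. Accordingly, to define $\wccmdtojdttps$ I would specify its three coordinates and check that they are compatible. The adjunction coordinate is the composite of the forgetful 2-functor $\wccmdCat\to\cmdCat$ with $\cmdtoadjloo$, sending a \wccmd{} $(p,K,\epsilon,\nu)$ to the Eilenberg--Moore adjunction $\Sigma=\emfrg{K}\dashv\emrgt{K}=\Delta$; the first fibration coordinate (that of $u$) is the forgetful assignment $(p,K)\mapsto p$; and the second fibration coordinate (that of $\du$) is the assignment provided by Lemma~\ref{lem:wccmd2jdtt}, namely $(p,K)\mapsto p\emfrg{K}$ on objects, $(C,H,\theta)\mapsto(C,\coal(H,\theta))$ on 1-cells, and $(\psi,\phi)\mapsto(\psi,\coal(\phi))$ on 2-cells. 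Compatibility over $\Cat\times\Cat\times\Cat$ is immediate: the total categories are $\uu=\ctg{E}$ and $\duu=\coal(K)$, the base is $\ctg{B}$, and $u\Sigma=p\emfrg{K}=\du$ by construction.

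The crux, and the step I expect to be the main obstacle, is to verify that the object $(p,\,p\emfrg{K},\,\Sigma\dashv\Delta)$ really is a \gcwf, \ie that it satisfies the cartesianness conditions of Definition~\ref{jdtt}. That $\Sigma=\emfrg{K}$ is a morphism of fibrations over $\ctg{B}$ follows because the chosen $\du$-cartesian lifts constructed in (\ref{lem:wccmd2jdtt}.\ref{lem:wccmd2jdtt:obj}) have $p$-cartesian underlying arrows, and $\emfrg{K}$ preserves verticality, so $\Sigma$ carries cartesian arrows to cartesian arrows. The counit of the Eilenberg--Moore adjunction is the comonad counit $\epsilon$, whose components are $p$-cartesian precisely by axiom~(\ref{wccmd-def}.\ref{wccmd-def:car}). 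For the unit, whose component at a coalgebra $(A,a)$ is $a\colon(A,a)\to(KA,\nu_A)$, I would argue directly that it is $\du$-cartesian: given a coalgebra morphism $g\colon(C,c)\to(KA,\nu_A)$ and a factorization $w$ of $pg$ through $pa$, the counit law $\epsilon_A a=\id$ forces any lift to be $h:=\epsilon_A g$; this $h$ lies over $w$, it is a coalgebra morphism because $K\epsilon_A\circ\nu_A=\id$ and $g$ is one, and the identity $ah=g$ follows from the $p$-cartesianness of $\epsilon_A$, since $ah$ and $g$ both lie over $pg$ and agree after postcomposition with $\epsilon_A$.

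It then remains to treat 1-cells, 2-cells, and 2-functoriality, which I expect to be routine bookkeeping assembled from the coordinates above. On a 1-cell $(C,H,\theta)$ the term coordinate $(C,\coal(H,\theta))$ is a morphism of fibrations by (\ref{lem:wccmd2jdtt}.\ref{lem:wccmd2jdtt:mor}), and the adjunction coordinate $(H,\coal(H,\theta))$ is the left morphism of adjunctions produced by $\cmdtoadjloo$; here the comparison $\zeta$ is the identity, because $\emfrg{K'}\coal(H,\theta)=H\emfrg{K}$ on the nose (Section~\ref{sec:cmd-adj:ra}), so the image is in fact a \gcwf{} morphism with identity $\zeta$. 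On a 2-cell $(\psi,\phi)$ one checks that $(\psi,\coal(\phi))$ is a 2-cell in $\fibCat$ over $\du,\du'$, using $\emfrg{K'}\coal(\phi)=\phi\emfrg{K}$ together with the fact that $(\psi,\phi)$ is already one over $p,p'$, while $(\phi,\coal(\phi))$ is a 2-cell in $\adjLlCat$ by the action of $\cmdtoadjloo$ on 2-cells recalled in Section~\ref{sec:cmd-adj:ra}. Functoriality is then inherited from that of $\cmdtoadjloo$ and of $\coal$. Finally, since the mate of the identity $\zeta$ equals $\theta$ (Section~\ref{sec:cmd-adj:ra}), the resulting morphism is pseudo (resp. strict) exactly when $\theta$ is invertible (resp. the identity), so $\wccmdtojdttps$ restricts to the wide 2-full sub-2-categories on the pseudo and on the strict morphisms.
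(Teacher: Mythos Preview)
Your proposal is correct and follows essentially the same route as the paper: decompose $\wccmdtojdttps$ into its $\fibCat$- and $\adjLlCat$-coordinates, invoke Lemma~\ref{lem:wccmd2jdtt} for the term fibration, and read off the 1- and 2-cell data from Section~\ref{sec:cmd-adj:ra}. The one noteworthy difference is in the verification that the unit is $\du$-cartesian: the paper simply cites Remark~\ref{wccmd-coalg} (structure maps of coalgebras are $p$-cartesian), tacitly using that the construction in~(\ref{lem:wccmd2jdtt}.\ref{lem:wccmd2jdtt:obj}) makes $\emfrg{K}$ reflect cartesian arrows, whereas you give a direct argument via $h:=\epsilon_A g$. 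Your argument is sound (note that the two ingredients you list, $Kh\circ c = g$ and $ah=g$, combine to give the coalgebra-morphism condition $ah = Kh\circ c$), and your explicit check that $\Sigma=\emfrg{K}$ preserves cartesian arrows fills in a point the paper leaves implicit.
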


\begin{proof}
First, we need to verify that $(p,p\emfrg{K},\cmdtoadjloo(K,\epsilon,\nu))$
is a \gcwf.
We already know from~\eqref{sec:cmd-adj:la} that $\cmdtoadjloo(K,\epsilon,\nu)$ is an adjunction.
The functor $p\emfrg{K}$ is a fibration by (\ref{lem:wccmd2jdtt}.\ref{lem:wccmd2jdtt:obj}).
It only remains to show that the components of the unit and counit of $\emfrg{K}\dashv \emrgt{K}$ are cartesian arrows.
For the counit this holds by assumption,
and the component of the unit at a coalgebra is the coalgebra structure map,
which is cartesian by \ref{wccmd-coalg}.

Given a lax morphism of \wccmd[s]
$(C,H,\theta) \colon (p,K,\epsilon,\nu) \to (p',K',\epsilon',\nu')$,
we have that $(C,H)$ is a morphism of fibrations by assumption,
$(C,\coal(H,\theta))$ is a morphism of fibrations by (\ref{lem:wccmd2jdtt}.\ref{lem:wccmd2jdtt:mor}),
and $(H,\coal(H,\theta)) = \cmdtoadj(H,\theta)$ is a left morphism of adjunctions
by~\eqref{sec:cmd-adj:ra}.
This proves that $(C,H,\coal(H,\theta))$ is a \gcwf{} morphism.

Given a 2-cell
$(\gamma,\phi)\colon (C_1,H_1,\theta_1)\Rightarrow(C_2,H_2,\theta_2)$
in $\wccmdCat$,
the pairs $(\gamma,\phi)$ and $(\gamma,\coal(\phi))$ are clearly 2-cells in $\fibCat$,
and $(\phi,\coal(\phi))$ is 2-cell in $\adjLlCat$ by~\eqref{sec:cmd-adj:ra}.
It follows that $(\gamma,\phi,\coal(\phi))$ is a 2-cell in $\jdttlCat$.

The 2-functor restricts since $\cmdtoadj$ does by~\eqref{sec:cmd-adj:la}.
\end{proof}

We now turn to the 2-functor $\adjtocmd$ from adjunctions to comonads.

\begin{lemma}\label{lem:jdtt2wccmd}
If $(u,\du,\Sigma\dashv\Delta)$ is a \gcwf,
then for every cartesian arrow $f \colon A \car B$ in $\uu$
the square
\[\begin{tikzcd}
X.A	\ar[d,"\du\Delta f"{swap}] \ar[r,"{u\epsilon_A}"]
&	X	\ar[d,"uf"]
\\
Y.B	\ar[r,"{u\epsilon_B}"]	&	Y
\end{tikzcd}\]
is a pullback in $\ctg{B}$.
\end{lemma}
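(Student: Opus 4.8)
The plan is to recognise the given square as the image under $u$ of the naturality square of the counit $\epsilon\colon\Sigma\Delta\Rightarrow\Id_{\uu}$ at $f$, and then to verify directly that it enjoys the universal property of a pullback in $\ctg{B}$. Write $X=uA$ and $Y=uB$, so that $X.A=u\Sigma\Delta A=\du\Delta A$ and $Y.B=\du\Delta B$ (recall $\du=u\Sigma$); commutativity $uf\circ u\epsilon_A=u\epsilon_B\circ\du\Delta f$ is then just $u$ applied to naturality of $\epsilon$. Throughout I would use that $\Sigma$ preserves cartesian arrows, being a morphism of fibrations (\ref{def:fibcat}), that $\Delta$ preserves cartesian arrows by (\ref{deltacartesian}.\ref{deltacartesian-car}), that $\du$ is a fibration, that the components of $\epsilon$ are $u$-cartesian, and that $\Sigma$ is faithful by \ref{jdtt-leftfaith-iso}.

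For existence of factorisations, suppose given $g\colon Z\to X$ and $h\colon Z\to Y.B$ in $\ctg{B}$ with $uf\circ g=u\epsilon_B\circ h$. Since $\du$ is a fibration I would first lift $h$ to a $\du$-cartesian arrow $\tilde h\colon c\car\Delta B$ over $h$, producing an object $c$ of $\duu$ over $Z$. Then $\Sigma\tilde h$ is cartesian and $\epsilon_B\circ\Sigma\tilde h\colon\Sigma c\to B$ lies over $u\epsilon_B\circ h=uf\circ g$; as $f$ is cartesian over $uf$, this factors uniquely as $f\circ m$ for some $m\colon\Sigma c\to A$ over $g$. Transposing $m$ across $\Sigma\dashv\Delta$ gives $n\colon c\to\Delta A$ with $\epsilon_A\circ\Sigma n=m$, and I would set $k:=\du n\colon Z\to X.A$. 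That $u\epsilon_A\circ k=g$ is immediate from $m$ lying over $g$. For $\du\Delta f\circ k=h$, I would compute $\epsilon_B\circ\Sigma(\Delta f\circ n)=f\circ\epsilon_A\circ\Sigma n=f\circ m=\epsilon_B\circ\Sigma\tilde h$ using naturality of $\epsilon$, and then invoke injectivity of the transpose bijection $\duu(c,\Delta B)\cong\uu(\Sigma c,B)$ to conclude $\Delta f\circ n=\tilde h$, whence $\du\Delta f\circ k=\du\tilde h=h$.

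For uniqueness, given any $k^\sharp$ with $u\epsilon_A\circ k^\sharp=g$ and $\du\Delta f\circ k^\sharp=h$, I would lift $k^\sharp$ to a cartesian $\kappa\colon\hat c\car\Delta A$ over $k^\sharp$. Then $\Delta f\circ\kappa$ is cartesian over $h$, using that $\Delta f$ is cartesian, and so differs from $\tilde h$ by a unique vertical iso $\iota$ with $\tilde h\circ\iota=\Delta f\circ\kappa$. Comparing $\epsilon_A\circ\Sigma\kappa$ and $m\circ\Sigma\iota=\epsilon_A\circ\Sigma(n\iota)$---both over $g$, and with equal composite along the cartesian $f$ by the same naturality computation---cartesianness of $f$ forces them equal, and injectivity of the transpose at $\epsilon_A$ then yields $\kappa=n\iota$; applying $\du$ and using that $\iota$ is vertical gives $k^\sharp=\du n=k$.

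The main obstacle is precisely that $\ctg{B}$ need not have pullbacks, so I cannot first form a pullback in $\uu$ and project it down: the remark (\ref{wccmd-rmk}.\ref{wccmd-rmk:pb'}) produces pullbacks upstairs \emph{out of} pullbacks downstairs, and the reverse implication is false for general fibrations. The substance of the lemma therefore lies in manufacturing the factorisation $k$ by hand, and the only non-formal ingredient---turning the base datum $h$ into an actual arrow into $\Delta B$ over $Z$---is supplied by the fibration $\du$, after which the adjunction $\Sigma\dashv\Delta$ together with cartesianness of $f$ does the rest. I expect the bookkeeping around the vertical iso $\iota$ in the uniqueness step to be the fiddliest point.
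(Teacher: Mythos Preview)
Your proposal is correct and follows essentially the same route as the paper: lift $h$ to a cartesian arrow into $\Delta B$, use cartesianness of $f$ to factor $\epsilon_B\circ\Sigma(\text{lift})$ through $f$, and transpose across $\Sigma\dashv\Delta$ to obtain the mediating arrow. The only difference is in the uniqueness step: rather than lifting $k^\sharp$ to a fresh cartesian arrow $\kappa\colon\hat c\car\Delta A$ and comparing via a vertical iso $\iota$, the paper uses cartesianness of $\Delta f$ directly to factor $\tilde h$ as $\Delta f\circ l'$ for a unique $l'\colon c\to\Delta A$ over $k^\sharp$ (same domain $c$), then transposes and compares with $m$---this avoids introducing $\hat c$ and $\iota$ altogether and is a line shorter, but your version is equally valid.
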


\begin{proof}
Let $k \colon Z \to X$ and $h \colon Z \to pKB$ be such that
$(u\epsilon_B) h = (uf) k$ and
consider a cartesian arrow $b \colon M \car \Delta B$ in $\duu$ over $h$.
The arrow induced by $h$ and $k$ will be the image under $\du$ of a (cartesian) arrow
$d \colon M \car \Delta A$ in $\duu$ such that $(\Delta f)d = b$.
Note first that, since $f$ is cartesian,
there is a unique arrow $a \colon \Sigma M \to A$ in $\uu$ over $k$
such that the left-hand diagram in~\eqref{lem:jdtt2wccmd:transpsq} commutes.
In particular, $a$ is cartesian
since $f$ and $\epsilon_B (\Sigma b) \colon \Sigma M\car B$ are.
\begin{equation}\label{lem:jdtt2wccmd:transpsq}
\begin{tikzcd}
	{\Sigma M} & A \\
	{\Sigma\Delta B} & B
	\arrow["a", from=1-1, to=1-2]
	\arrow["f", from=1-2, to=2-2]
	\arrow["{\epsilon_{B}}"', from=2-1, to=2-2]
	\arrow["{\Sigma b}"', from=1-1, to=2-1]
\end{tikzcd}
\hspace{4em}
\begin{tikzcd}
	M & {\Delta A} \\
	{\Delta B} & {\Delta B}
	\arrow["{a^\#}", from=1-1, to=1-2]
	\arrow["{\Delta f}", from=1-2, to=2-2]
	\arrow["b"', from=1-1, to=2-1]
	\arrow["{\id_{\Delta B}}"', from=2-1, to=2-2]
\end{tikzcd}
\hspace{4em}
\begin{tikzcd}
	{\Sigma M} & A \\
	{\Sigma\Delta A} & A
	\arrow["a", from=1-1, to=1-2]
	\arrow["{\id_A}", from=1-2, to=2-2]
	\arrow["{\epsilon_{A}}"', from=2-1, to=2-2]
	\arrow["{\Sigma a^\#}"', from=1-1, to=2-1]
\end{tikzcd}
\end{equation}
Transposing the left-hand square in~\eqref{lem:jdtt2wccmd:transpsq} yields the central one,
while transposing a trivial square involving $a^\#$ yields the right-hand one.
It follows that all three squares in~\eqref{lem:jdtt2wccmd:transpsq} commute.

Define
\[
d := a^\# \colon M \to \Delta A,
\]
which is cartesian because $d=\Delta a\circ \eta_M$, the unit is cartesian and $\Delta$ preserves cartesian maps by~\eqref{deltacartesian}.
Commutativity of the central and right-hand square
in~\eqref{lem:jdtt2wccmd:transpsq} entails that
$(\du\Delta f) (\du d) = h$ and $(u\epsilon_{A}) (\du d) = k$,
respectively.
We are left to prove that $\du d$ is the unique such.

Let $l \colon Z \to X.A$ be such that
$(\du\Delta f)l = h$ and $(u\epsilon_{A})l = k$.
Since $\Delta f$ is cartesian, there is a unique arrow
$l' \colon M \to \Delta A$ over $l$ such that $(\Delta f)l' = b$.
Transposing as above yields $f l'^\# = \epsilon_B (\Sigma b) = f a$.
As $u (l'^\#) = u (\epsilon_A (\Sigma l'))= k$,
it follows that $l'^\# = a$, and thus $l' = d$.
\end{proof}

\begin{corollary}\label{cor:jdtt2wccmd}
The 2-functor $\adjlootocmd \colon \adjLlCat \to \cmdCat$ lifts
to a 2-functor
\[\begin{tikzcd}[column sep=10em, row sep=1ex]
\jdttlCat	\ar[r,"\jdttpstowccmd"]	&	\wccmdCat
\\
(u,\du,\Sigma\dashv\Delta)	\ar[d,"{(C,F,G,\zeta)}"{swap,name=ddom}, bend right=3em,shift right=1, "\qquad"] 	\ar[d,"{(C',F',G',\zeta')}"{name=dom,right}, bend left=3em,shift left=1,"\qquad"] \arrow[Rightarrow,from=ddom,to=dom,shorten=1mm, "{(\gamma,\phi,\psi)}"]
&	(u,\adjlootocmd(\Sigma,\Delta))	\ar[d,"{(C,\adjlootocmd(F,G,\zeta))}"{swap,name=cod,left},"\qquad",bend right=3em]
\ar[d,"{(C',\adjlootocmd(F',G',\zeta'))}"{name=ccod},"\qquad",bend left=3em] \arrow[Rightarrow,from=cod,to=ccod,shorten=1mm, "{(\gamma,\phi)}"]
\\[2em]
(u',\du',\Sigma'\dashv\Delta')&	(u',\adjlootocmd(\Sigma',\Delta'))
\ar[from=dom, to=cod,mapsto]
\end{tikzcd}\]
The 2-functor $\jdttpstowccmd$ restricts to a 2-functor
between the wide 2-full sub-2-category on the pseudo morphisms.
Its restriction $\jdtttowccmd$ to $\jdttCat$ also restricts to a 2-functor
between the wide 2-full sub-2-category on the strict morphisms.
\end{corollary}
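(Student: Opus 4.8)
The plan is to read off the corollary from the object-level verification together with the lifting properties of $\adjlootocmd$ already recorded in Corollary~\ref{ext-loose}, the one substantive new ingredient being supplied by Lemma~\ref{lem:jdtt2wccmd}. First I would check the object assignment. Given a gcwf $(u,\du,\Sigma\dashv\Delta)$, the comonad $\adjlootocmd(\Sigma,\Delta)=\adjtocmd(\Sigma,\Delta)$ is $(\Sigma\Delta,\epsilon,\Sigma\eta\Delta)$ on $\uu$, the comonad induced by the adjunction as recalled in Section~\ref{sec:cmd-adj:la}, so I must verify that it is a weakening and contraction comonad on $u$ in the sense of Definition~\ref{wccmd-def}. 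Condition~(\ref{wccmd-def}.\ref{wccmd-def:car}) holds immediately, since the counit of this comonad is the counit $\epsilon$ of $\Sigma\dashv\Delta$, which is $u$-cartesian by the definition of a gcwf. For condition~(\ref{wccmd-def}.\ref{wccmd-def:pb}) I would use that $\Sigma$ is a morphism of fibrations over $\ctg{B}$, so $u\Sigma=\du$ and hence $u\Sigma\Delta=\du\Delta$; the image under $u$ of the naturality square of $\epsilon$ at a cartesian arrow $f$ is then exactly the square shown to be a pullback in Lemma~\ref{lem:jdtt2wccmd}.

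Next I would dispatch 1-cells and 2-cells, where nearly everything is inherited. A loose gcwf morphism $(C,F,G,\zeta)$ is sent to $(C,\adjlootocmd(F,G,\zeta))=(C,F,\theta)$ with $\theta:=\Sigma'\zeta^\#\circ\zeta^{-1}\Delta$; the two clauses of Definition~\ref{2catofwccmd} for a 1-cell then hold because $(C,F)$ is a $\fibCat$-morphism by the data of a gcwf morphism, while $(F,\theta)$ is a lax morphism of comonads by Corollary~\ref{ext-loose}. On a 2-cell $(\gamma,\phi,\psi)$ the image is $(\gamma,\phi)$: the pair $(\gamma,\phi)$ is a $\fibCat$-2-cell by clause~(\ref{jdtt-2cat}.\ref{jdtt-2cat:fibty}), and $\phi$ satisfies the comonad 2-cell equation by Corollary~\ref{ext-loose} applied to the $\adjLlCat$-2-cell $(\phi,\psi)$ provided by clause~(\ref{jdtt-2cat}.\ref{jdtt-2cat:leftsq}). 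Since composition in $\jdttlCat$ is computed componentwise out of $\fibCat$ and $\adjLlCat$, functoriality reduces to functoriality of $\adjlootocmd$ (again Corollary~\ref{ext-loose}) together with the trivial passage of the components $(C,F)$.

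Finally I would treat the restrictions, which is where the only real care is needed. The pseudo case is immediate: $\adjlootocmd$ sends pseudo loose morphisms to pseudo morphisms of comonads by Corollary~\ref{ext-loose}, so $\jdttpstowccmd$ restricts to the sub-2-categories on pseudo morphisms. The strict case is subtler, and this is precisely why the statement records it only for the restriction $\jdtttowccmd$ to $\jdttCat$: for a merely strict \emph{loose} morphism one has $\zeta^\#=\id$ but $\zeta$ may be a nontrivial iso, so $\theta$ is only forced to be invertible---indeed Corollary~\ref{ext-loose} notes that $\adjlootocmd$ carries $\adjstrLlCat$ into $\cmdpsCat$---and not an identity. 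Passing first to $\jdttCat$ imposes $\zeta=\id$, whence $\zeta^{-1}=\id$; combined with strictness ($\zeta^\#=\id$) this forces $\theta=\Sigma'\id\circ\id\Delta=\id$, a strict morphism of comonads. I expect no genuine obstacle beyond keeping this distinction straight, the substantive pullback verification having been isolated into Lemma~\ref{lem:jdtt2wccmd}.
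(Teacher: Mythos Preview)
Your proposal is correct and follows essentially the same route as the paper: verify the \wccmd{} axioms on objects via the cartesian counit and Lemma~\ref{lem:jdtt2wccmd}, then inherit the 1-cell, 2-cell, and restriction statements from Corollary~\ref{ext-loose}. Your treatment of the strict restriction is in fact more explicit than the paper's, which simply refers back to~\eqref{sec:cmd-adj:la} and~\eqref{ext-loose}; the paper also cites~(\ref{wccmd-rmk}.\ref{wccmd-rmk:pb'}) alongside Lemma~\ref{lem:jdtt2wccmd} for condition~(\ref{wccmd-def}.\ref{wccmd-def:pb}), but as you observe the lemma already gives exactly the required pullback in $\ctg{B}$ once one uses $u\Sigma=\du$.
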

\begin{proof}
We need to verify that, when $(u,\du,\Sigma\dashv\Delta)$ is a \gcwf,
the comonad $\adjtocmd(\Sigma,\Delta)$ is a \wccmd{} with fibration $u$.
Condition~\ref{wccmd-def:car} in~\eqref{wccmd-def} is satisfied
since the counit is cartesian by assumption.
Condition~\ref{wccmd-def:pb} in~\eqref{wccmd-def} follows from~(\ref{wccmd-rmk}.\ref{wccmd-rmk:pb'}) and~\eqref{lem:jdtt2wccmd}.

To verify that $(C,\adjlootocmd(F,G,\zeta))$ is a morphism of \wccmd[s]
whenever $(C,F,G,\zeta)$ is a 1-cell in $\jdttlCat$
note that the functor component of $\adjlootocmd(F,G,\zeta)$ is $F$
by construction~\eqref{ext-loose}.
But $(C,F)$ is a morphism of fibrations by assumption, and
we already know that $\adjlootocmd(F,G,\zeta)$ is a lax morphism of comonads.

Given a 2-cell $(\gamma,\phi,\psi)\colon (C_1,F_1,G_1,\zeta_1) \to (C_2,F_2,G_2,\zeta_2)$ in $\jdttlCat$,
it is clear that
$(\gamma,\phi) \colon (C_1,\jdttpstowccmd(F_1,G_1,\zeta_1)) \to (C_2,\jdttpstowccmd(F_2,G_2,\zeta_2))$
is a 2-cell in $\wccmdCat$,
since $\adjlootocmd(\phi,\psi) = \phi$ is a 2-cell in $\cmdCat$ by~\eqref{ext-loose}.

The 2-functor restricts as stated because $\adjlootocmd$ does, see~\eqref{sec:cmd-adj:la} and~\eqref{ext-loose}.
\end{proof}

\begin{proof}[of Theorem \ref{thm:wccmd-jdtt}]
From \ref{cor:wccmd2jdtt} and \ref{cor:jdtt2wccmd} we have two 2-functors
\[
\wccmdtojdttps \colon \wccmdCat \leftrightarrows \jdttlCat \,\colon\! \jdttpstowccmd
\]
We begin by showing that they form a biadjunction by lifting the biadjunction from~\eqref{thm:cmd-adjloo}.
The 2-adjunction involving $\jdttCat$ will follow by restriction
along the inclusion $\jdttCat \hookrightarrow \jdttlCat$.

As in \ref{thm:cmd-adjloo},
the composite $\jdttpstowccmd \circ \wccmdtojdttps$ is the identity on $\wccmdCat$.
Next, we show that the unit $\unitadjcmd$ of $\adjtocmd\dashv\cmdtoadj$ from~\eqref{thm:cmd-adjloo} lifts to a pseudo-natural transformation
$\unitadjloocmd \colon \Id_{\wccmdCat} \to \jdttpstowccmd\circ\wccmdtojdttps$.
The component of $\unitadjcmd$ at an adjunction $\Sigma \dashv \Delta$
is the strict left morphism of adjunctions $(\Id,\emcmp{\Sigma,\Delta})$,
where $\emcmp{\Sigma,\Delta}$ is the canonical comparison functor described in~\eqref{sec:cmd-adj:la}.
Given a \gcwf{} $(u,\du,\Sigma\dashv\Delta)$,
the component of $\unitadjcmd$ at $\Sigma \dashv \Delta$
is the strict left morphism of adjunctions $(\Id,\emcmp{\Sigma,\Delta})$.
The functor $\emcmp{\Sigma,\Delta}$ preserves cartesian arrows
since $\Sigma$ does and coalgebra structure maps are cartesian by~\eqref{wccmd-coalg}.
It follows that
\[
\unitadjloocmd_{u,\du,\Sigma\dashv\Delta}
=(\Id_\mathcal{C},\unitadjcmd_{\Sigma\dashv\Delta})
=(\Id_\mathcal{C},\Id_\uu,\emcmp{\Sigma,\Delta})
\]
is a strict \gcwf{} morphism.
To see that
this choice is pseudo-natural in $(u,\du,\Sigma\dashv\Delta)$,
let $(C,F,G,\zeta) \colon (u,\du,\Sigma\dashv\Delta) \to (u',\du',\Sigma'\dashv\Delta')$
be a loose \gcwf{} morphism.
The required invertible 2-cell
$(C,\unitadjcmd_{u',\du'}(F,G,\zeta)) \to (C,\cmdtoadj\adjtocmd(F,G,\zeta))\unitadjcmd_{u,\du})$
is $(\id_C,\id_F,\hat{\zeta})$,
where $\hat{\zeta}$ is the natural iso from \ref{rem:liftunit}
(and $(\id_F,\hat{\zeta})$ is the pseudo naturality of $\unitadjcmd$~\eqref{thm:cmd-adjloo}).
Clearly, if $\zeta=\id$ so is $\hat{\zeta}$, and thus the invertible 2-cell,
meaning that $\unitadjloocmd$ is natural with respect to \gcwf{} morphisms.
This last fact proves that the biadjunction will restrict to a 2-adjunction between $\wccmdCat$ and $\jdttCat$.

The triangular identities for $\jdttpstowccmd \dashv \wccmdtojdttps$
follow immediately from those of $\adjtocmd \dashv \cmdtoadj$ in~\eqref{eq:cmd-adj:triangid}:
\begin{gather*}
\jdttpstowccmd\unitadjloocmd_{u,\du,\Sigma,\Delta}
= (\Id_\mathcal{C},\adjtocmd\unitadjcmd_{\Sigma,\Delta})
= \idd_{\jdttpstowccmd(u,\du,\Sigma\dashv\Delta)}
\\
\unitadjloocmd_{\wccmdtojdttps(K,\epsilon,\nu)}
= (\Id_\mathcal{B},\unitadjcmd_{\cmdtoadj(K,\epsilon,\nu)})
= \idd_{\wccmdtojdttps(K,\epsilon,\nu)}
\end{gather*}

It remains to show that the biadjunction $\jdttpstowccmd \dashv \wccmdtojdttps$
is in fact a biequivalence.
This amounts to showing that
each component of $\unitadjloocmd$ is an equivalence in $\jdttlCat$.
As $\unitadjloocmd$ is pseudo-natural,
so will be the family of its weak inverses.
To construct a weak inverse,
consider the functor $\eminvcmp{\Sigma,\Delta}\colon \coal(\Sigma\Delta)\to\duu$
and natural isos
$\zeta \colon \eminvcmp{\Sigma,\Delta}\emcmp{\Sigma,\Delta} \natiso \Id_{\duu}$
and $\xi \colon \emcmp{\Sigma,\Delta}\eminvcmp{\Sigma,\Delta} \natiso \Id_{\coal(\Sigma\Delta)}$
from~\eqref{lem:inverse_to_comparison} below.
The quadruple $(\Id,\Id_{\uu},\eminvcmp{\Sigma,\Delta},\emfrg{\Sigma\Delta}\xi)$
is a loose \gcwf{} morphism
$(u,u\emfrg{\Sigma\Delta},\emfrg{\Sigma\Delta},\emrgt{\Sigma\Delta}) \to (u,\du,\Sigma\dashv\Delta)$
since $\emfrg{\Sigma\Delta}\xi\colon \Sigma \eminvcmp{\Sigma,\Delta} \natiso \emfrg{\Sigma\Delta}$.
Note that the triple $(\id,\id,\xi)$ is an invertible 2-cell in $\jdttlCat$
from $(\Id,\Id,\emcmp{}\eminvcmp{},\emfrg{}\xi)$
to $(\Id,\Id,\Id_{\coal(\Sigma\Delta)})$.
Note also that the triple $(\id,\id,\zeta)$ is an invertible 2-cell in $\jdttlCat$
from $(\Id,\Id,\eminvcmp{}\emcmp{},\emfrg{}\xi\emcmp{})$
to $(\Id,\Id,\Id_{\duu})$ since
$\Sigma\zeta = \emfrg{}\xi\emcmp{}$.
This concludes the proof of the biequivalence.

To see that the biadjunction and the 2-reflection restrict as stated,
recall first that the 2-functors $\wccmdtojdttps$ and $\jdttpstowccmd$ restrict in all three cases, see~\eqref{cor:wccmd2jdtt} and~\eqref{cor:jdtt2wccmd}.
The unit $\unitadjloocmd$ restricts as well since its components are strict \gcwf{} morphisms.

The biequivalence restricts to pseudo morphisms
because each component of the mate of $\emfrg{\Sigma\Delta}\xi$
is invertible by~\eqref{lem:inverse_to_comparison}.
\end{proof}

In the next lemma we construct the weak inverse used in the proof of~\eqref{thm:wccmd-jdtt} above.
To prove the lemma we assume that the term fibration $\du$ is cloven
(since $\Sigma$ preserves cartesian maps, $u$ becomes cloven too).

\begin{lemma}\label{lem:inverse_to_comparison}
Let $(u,\du,\Sigma\dashv\Delta)$ be a \gencwf{y}.
There are a functor $\eminvcmp{\Sigma,\Delta}\colon \coal(\Sigma\Delta)\to\duu$
and natural isos
$\zeta \colon \eminvcmp{\Sigma,\Delta}\emcmp{\Sigma,\Delta} \natiso \Id_{\duu}$
and $\xi \colon \emcmp{\Sigma,\Delta}\eminvcmp{\Sigma,\Delta} \natiso \Id_{\coal(\Sigma\Delta)}$
making $\emcmp{\Sigma,\Delta}$ and $\eminvcmp{\Sigma,\Delta}$
into an adjoint equivalence of categories, meaning that
\[
\emcmp{\Sigma,\Delta}\zeta = \xi\emcmp{\Sigma,\Delta}\hspace{3em}\text{and}\hspace{3em}
\eminvcmp{\Sigma,\Delta}\xi = \zeta\eminvcmp{\Sigma,\Delta}.
\]
Moreover, each component of $\zeta$, $\xi$, and the mate of $\emfrg{\Sigma\Delta}\xi$ is vertical,
and the latter is also invertible.
\end{lemma}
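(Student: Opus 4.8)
The plan is to construct $\eminvcmp{\Sigma,\Delta}$ by cartesian lifting, the decisive observation being that, by \ref{cor:jdtt2wccmd}, the comonad $\Sigma\Delta$ is a \wccmd, so by \ref{wccmd-coalg} every coalgebra structure map is $u$-cartesian. Concretely, given a coalgebra $(A,\alpha\colon A\car\Sigma\Delta A)$, I set $a:=\eminvcmp{\Sigma,\Delta}(A,\alpha)$ to be the domain of the chosen $\du$-cartesian lift $\kappa\colon a\car\Delta A$ of $u\alpha\colon uA\to u\Sigma\Delta A=\du\Delta A$ (here I use that $\du$, hence $u$, is cloven). Since $\Sigma$ preserves cartesian arrows, $\Sigma\kappa$ is $u$-cartesian over $u\alpha$ with codomain $\Sigma\Delta A$; as $\alpha$ is \emph{also} $u$-cartesian over $u\alpha$ into $\Sigma\Delta A$, uniqueness of cartesian lifts yields a vertical iso $A\cong\Sigma a$, with inverse $\epsilon_A\Sigma\kappa\colon\Sigma a\to A$. (Without the cartesianness of $\alpha$ this step fails, which is precisely why the \wccmd{} structure is needed.) On a coalgebra morphism $g\colon(A,\alpha)\to(A',\alpha')$ I define $\eminvcmp{\Sigma,\Delta}g$ as the unique arrow over $ug$ factoring $\Delta g\circ\kappa$ through the cartesian $\kappa'$; this is well defined because the coalgebra law, after applying $u$ and using $u\Sigma=\du$, gives $\du\Delta g\circ u\alpha=u\alpha'\circ ug$, and functoriality is immediate from uniqueness.

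Next I produce the two natural isos. For $\zeta$, note that $\eminvcmp{\Sigma,\Delta}\emcmp{\Sigma,\Delta}(b)$ is the chosen cartesian lift of $\du\eta_b$ at $\Delta\Sigma b$; since the unit $\eta_b\colon b\car\Delta\Sigma b$ is itself cartesian over $\du\eta_b$, the two lifts differ by a unique vertical iso $\zeta_b$, and naturality follows from its uniqueness. For $\xi$, I take $\xi_{(A,\alpha)}:=\epsilon_A\Sigma\kappa\colon\Sigma a\to A$, the vertical iso found above; the only genuine check is that it is a coalgebra morphism, which reduces, by faithfulness of $\Sigma$ (\ref{jdtt-leftfaith-iso}) and the identity $\Sigma\kappa=\alpha\circ(\epsilon_A\Sigma\kappa)$, to $\kappa=\Delta(\epsilon_A\Sigma\kappa)\circ\eta_a$; this in turn follows from naturality of $\eta$ along $\kappa$ together with the triangle identity $\Delta\epsilon\circ\eta\Delta=\id$. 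By construction every component of $\zeta$ and of $\xi$ is vertical.

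To upgrade the resulting equivalence to an adjoint one I verify the triangle identities $\emcmp{\Sigma,\Delta}\zeta=\xi\emcmp{\Sigma,\Delta}$ and $\eminvcmp{\Sigma,\Delta}\xi=\zeta\eminvcmp{\Sigma,\Delta}$ directly, again via uniqueness of cartesian lifts; alternatively one keeps $\zeta$ and replaces $\xi$ by the standard correction making the pair adjoint, observing that every correcting $2$-cell is assembled from $\zeta$, $\xi$ and whiskerings and hence remains vertical, so the stated verticality is preserved.

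Finally, the last clause concerns the mate of $\emfrg{\Sigma\Delta}\xi\colon\Sigma\eminvcmp{\Sigma,\Delta}\natiso\emfrg{\Sigma\Delta}$, viewed as the structure iso of the left loose morphism of adjunctions $(\Id_\uu,\eminvcmp{\Sigma,\Delta})\colon(\emfrg{\Sigma\Delta},\emrgt{\Sigma\Delta})\to(\Sigma,\Delta)$. I use that the comparison functor furnishes a \emph{strict} left morphism $(\Id_\uu,\emcmp{\Sigma,\Delta})$ in the opposite direction (\ref{sec:cmd-adj:unit}), so that $\emfrg{\Sigma\Delta}\emcmp{\Sigma,\Delta}=\Sigma$, $\emrgt{\Sigma\Delta}=\emcmp{\Sigma,\Delta}\Delta$, and its mate is the identity. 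Composing the two morphisms and applying the mate-composition rule (\ref{rem:adjmorph}.\ref{rem:adjmorph:matecmp}) together with the $2$-cell relation (\ref{rem:adjmorph}.\ref{rem:adjmorph:twocell}) to the isomorphism $\zeta\colon\eminvcmp{\Sigma,\Delta}\emcmp{\Sigma,\Delta}\natiso\Id$, the mate of $\emfrg{\Sigma\Delta}\xi$ is identified with the whiskering $\zeta\Delta\colon\eminvcmp{\Sigma,\Delta}\emrgt{\Sigma\Delta}=\eminvcmp{\Sigma,\Delta}\emcmp{\Sigma,\Delta}\Delta\natiso\Delta$, which is a vertical natural iso. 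I expect this last identification to be the main obstacle: it is the one point that requires the mate calculus rather than a bare uniqueness-of-lifts argument, and the verticality of the mate—needed for the biequivalence to restrict to pseudo morphisms in \ref{thm:wccmd-jdtt}—hinges precisely on recognising it as a whiskering of $\zeta$.
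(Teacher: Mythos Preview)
Your construction of $\eminvcmp{\Sigma,\Delta}$, $\zeta$, and $\xi$, together with the verification that they form an adjoint equivalence with vertical components, matches the paper's proof essentially line for line (your formula $\xi_{(A,\alpha)}=\epsilon_A\Sigma\kappa$ is the same vertical iso the paper characterises by $\alpha\,\xi_{(A,\alpha)}=\Sigma\kappa$, since $\epsilon_A\alpha=\id$; and your check that $\xi$ is a coalgebra morphism via faithfulness of $\Sigma$ and the triangle identity is a rephrasing of the paper's transposition argument).

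The one genuine divergence is in the treatment of the mate of $\emfrg{\Sigma\Delta}\xi$. The paper computes it component-wise: it first shows $(\emfrg{}\xi_{\emrgt{}A})\eta_{\eminvcmp{}\emrgt{}A}=\overline{u\nu_A}$ and then reads off $(\emfrg{}\xi)^\#_A=(\Delta\epsilon_A)\overline{u\nu_A}$, a cartesian arrow over an identity, hence a vertical iso. Your route is more structural: compose the strict left morphism $(\Id,\emcmp{})$ with $(\Id,\eminvcmp{},\emfrg{}\xi)$, use the mate-composition rule~(\ref{rem:adjmorph}.\ref{rem:adjmorph:matecmp}) and the strictness of $\emcmp{}$ (so that the composite's mate is $(\emfrg{}\xi)^\#$ itself), and then invoke~(\ref{rem:adjmorph}.\ref{rem:adjmorph:twocell}) for the 2-cell $(\id,\zeta)$ to conclude $(\emfrg{}\xi)^\#=\zeta\Delta$. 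This is correct, but note that for $(\id,\zeta)$ to be a 2-cell you need the composite's structure iso $\emfrg{}\xi\emcmp{}$ to equal $\Sigma\zeta$, which is exactly the triangle identity $\emcmp{}\zeta=\xi\emcmp{}$ applied under $\emfrg{}$; so your argument genuinely relies on having established that identity first. The payoff of your approach is that verticality and invertibility of the mate come for free from those of $\zeta$, with no further component-level calculation; the paper's direct computation is more self-contained but less conceptual.
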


\begin{proof}
The functor $\eminvcmp{\Sigma,\Delta}$ is defined on a coalgebra $h \colon A \to \Sigma\Delta A$ as the reindexing of $\Delta A$ along $uh$.
The action on a morphism of coalgebras $f$ is induced accordingly
using the cartesian lift $\overline{uh}$ that defines $\eminvcmp{\Sigma,\Delta}h$ as depicted below,
where both top squares, in $\duu$ and $\uu$ respectively, sit on the bottom square.
\begin{gather*}
\begin{tikzcd}[ampersand replacement=\&]
\eminvcmp{\Sigma,\Delta}k	\ar[d,dashed,"\eminvcmp{\Sigma,\Delta}f"{swap}] \ar[r,"\overline{uk}"]	\&	\Delta B	\ar[d,"\Delta f"]
\\
\eminvcmp{\Sigma,\Delta}h	\ar[r,"\overline{uh}"]	\&	\Delta A
\end{tikzcd}
\hspace{4em}
\begin{tikzcd}[ampersand replacement=\&]
B	\ar[d,"f"{swap}] \ar[r,"k"]	\&	\Sigma\Delta B	\ar[d,"\Sigma\Delta f"]
\\
A	\ar[r,"h"]	\&	\Sigma\Delta A
\end{tikzcd}
\\[2ex]
\begin{tikzcd}[ampersand replacement=\&]
Y	\ar[d,"uf"{swap}] \ar[r,"uk"]	\&	Y.B	\ar[d,"\du\Delta f"]
\\
X	\ar[r,"uh"]	\&	X.A
\end{tikzcd}
\end{gather*}

As the action on arrows is defined by a universal property,
functoriality of $\eminvcmp{\Sigma,\Delta}$ is straightforward.
It is also clear that $\eminvcmp{\Sigma,\Delta}$ preserves cartesian arrows.

Recall that $\emcmp{\Sigma,\Delta}a = \Sigma\eta_a$.
Therefore $\eminvcmp{\Sigma,\Delta}\circ\emcmp{\Sigma,\Delta}a$
is defined as the domain of a cartesian lift of $\du\eta_a$ into $\Delta\Sigma a$.
But the component $\eta_a$ of the unit at an object $a$ of $\duu$ is also cartesian into $\Delta\Sigma a$.
Therefore there is a unique vertical iso
$\zeta_a \colon
\eminvcmp{\Sigma,\Delta}\Sigma\eta_a \to a$
such that $\eta_a \zeta_a = \overline{\du \eta_a}$.
Naturality can be shown using that $\eta_a$ is cartesian.
It follows that
$\zeta \colon \eminvcmp{\Sigma,\Delta}\emcmp{\Sigma,\Delta} \natiso \Id_{\duu}$.

On the other hand, a coalgebra $h$ is cartesian over $uh$ and so is $\Sigma\overline{uh}$,
since $\Sigma$ preserves cartesian arrows.
It follows that there is a unique vertical iso
$\xi_h \colon \Sigma \eminvcmp{\Sigma,\Delta}h \to A$
such that $h \xi_h = \Sigma \overline{uh}$.
Again, since $h$ is cartesian, $\xi_h$ is natural in $h$,
and it follows that
$\xi \colon \Sigma \eminvcmp{\Sigma,\Delta} \natiso \emfrg{\Sigma\Delta}$.
To obtain a natural iso
$\emcmp{\Sigma,\Delta}\eminvcmp{\Sigma,\Delta} \natiso \Id_{\coal(\Sigma\Delta)}$,
it is enough to show that $\xi_h$ is in fact a morphism,
and thus an iso, of coalgebras from
$\Sigma\eta_{\eminvcmp{\Sigma,\Delta}h}$ to $h$.
This amounts to the commutativity of the square below.
\[\begin{tikzcd}
\Sigma \eminvcmp{\Sigma,\Delta}h
	\ar[d,"{\Sigma\eta_{\eminvcmp{}h}}"{swap}]
	\ar[r,"{\xi_h}"]
	\ar[dr,"{\Sigma\overline{uh}}"{description}]
&	A	\ar[d,"h"]
\\
\Sigma\Delta\Sigma \eminvcmp{\Sigma,\Delta}h	\ar[r,"{\Sigma\Delta\xi_h}"{swap}]
&	\Sigma\Delta A
\end{tikzcd}\]

The upper-right triangle commutes by definition of $\xi_h$.
The lower-left triangle is the image under $\Sigma$ of the left-hand square below,
which is the transpose under $\Sigma\dashv\Delta$ of the right-hand square.
\[\begin{tikzcd}
\eminvcmp{\Sigma,\Delta}h	\ar[d,"{\overline{uh}}"{swap}] \ar[r,"{\eta_{\eminvcmp{}h}}"]
&	\Delta\Sigma \eminvcmp{\Sigma,\Delta}h	\ar[d,"{\Delta\xi_h}"]
\\
\Delta A	\ar[r,"{\id_A}"{swap}]	&	\Delta A
\end{tikzcd}
\hspace{3em}
\begin{tikzcd}
\Sigma \eminvcmp{\Sigma,\Delta}h	\ar[d,"\Sigma\overline{uh}"{swap}] \ar[r,"{\id_{\eminvcmp{}h}}"]
&	\Sigma \eminvcmp{\Sigma,\Delta}h	\ar[d,"{\xi_h}"]
\\
\Sigma\Delta A	\ar[r,"{\epsilon_A}"{swap}]	&	A
\end{tikzcd}\]
The right-hand square commutes since
$\Sigma\overline{uh} = h\xi_h$ and $\epsilon_A h = \id_A$.
It follows that the other two squares commute as well.

To see that
$\emcmp{\Sigma,\Delta}\zeta = \xi \emcmp{\Sigma,\Delta}$  note that, for every $a \in \duu$,
$\Sigma\eta_a \circ \Sigma\zeta_a = \Sigma\overline{\du\eta_a}$
by definition of $\zeta_a$.
It follows that $\Sigma\zeta_a = \xi_{\Sigma\eta_a}$
as required.
The other equation
$\eminvcmp{\Sigma,\Delta}\xi = \zeta \eminvcmp{\Sigma,\Delta}$
follows from the fact that $\overline{uh}$ is cartesian and
\[
\overline{uh}\circ \eminvcmp{\Sigma,\Delta}\xi_h
\ =\ \Delta \xi_h \circ \overline{\du\eta_{\eminvcmp{}h}}
\ =\ \Delta \xi_h \circ \eta_{\eminvcmp{}h} \circ \zeta_{\eminvcmp{}h}
\ =\ \overline{uh}\circ\zeta_{\eminvcmp{}h},
\]
using definitions of $\eminvcmp{}$ and of $\zeta$,
and commutativity of the left-hand square above.

Finally, to see that the mate
$(\emfrg{}\xi)^\# \colon \eminvcmp{\Sigma,\Delta}\emrgt{\Sigma\Delta} \Rightarrow \Delta$
is a vertical natural iso,
note first that $(\emfrg{}\xi_{\emrgt{}A})\eta_{\eminvcmp{}\emrgt{}A} = \overline{u\nu_A}$,
which can be seen post-composing with $\Delta(\epsilon_{\Sigma\Delta A}\nu_A)$
and using the definition of $\xi$.
It follows that $(\emfrg{}\xi)^\#_A = (\Delta\epsilon_A)\overline{u\nu_A}$
is cartesian over the identity on $\du\Delta A$,
thus vertical and invertible.
\end{proof}

\subsection{Discrete and full comprehension categories}
\label{sec:disc-full-compcats}

Recall that we have defined $\cwfCat = \dscjdttCat$~\eqref{dsc-gcwf}.
As we show below,
the biequivalence~\eqref{eqv-compc-gcwf} restricts to categories with families and discrete comprehension categories.
The general reason is that, in discrete fibrations, vertical isos are identities.
In particular, as already observed, loose \gcwf{} morphisms coincide with \gcwf{} morphisms.

\begin{corollary}\label{disceqv}
The 2-category $\dsccompcatCat$ of comprehension categories with discrete fibration
and the 2-category $\cwfCat$ of categories with families are adjoint 2-equivalent.

The adjoint 2-equivalence restricts to the wide 2-full sub-2-categories on the pseudo and strict morphisms.
\end{corollary}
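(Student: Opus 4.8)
The plan is to exploit the single structural fact that in a discrete fibration every vertical isomorphism is an identity, so that each ``up to vertical iso'' coincidence used to establish the biequivalence \eqref{eqv-compc-gcwf} collapses to an on-the-nose identity, upgrading that biequivalence to an adjoint 2-equivalence on the discrete subcategories. I would treat the two halves of \eqref{eqv-compc-gcwf} separately and then compose.

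First I would check that the composite 2-functors restrict. Given a comprehension category $(p,\chi)$ with $p$ discrete, $\cmpcttowccmd$ leaves $p$ unchanged and so lands in $\dscwccmdCat$, while $\wccmdtojdttps$ produces the \gcwf{} with $u=p$ and $\du=p\emfrg{K}$. The fibration $p\emfrg{K}$ is again discrete: by \eqref{wccmd-coalg} every coalgebra structure map is cartesian, and in a discrete fibration the cartesian lift into a fixed object over a fixed base arrow is unique, so coalgebras and their morphisms are determined by their images in $\bb$. Hence $\comptogcfw = \wccmdtojdttps\circ\cmpcttowccmd$ restricts to a 2-functor $\dsccompcatCat\to\dscjdttCat=\cwfCat$. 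Conversely $\jdttpstowccmd$ fixes the discrete fibration $u$ and $\wccmdtocmpct$ fixes the underlying fibration, so $\gcwftocomp=\wccmdtocmpct\circ\jdttpstowccmd$ restricts to $\cwfCat\to\dsccompcatCat$.

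The crux is to upgrade each biequivalence. For the pair $(\cmpcttowccmd,\wccmdtocmpct)$ from \eqref{eqv-wccmd-cmpct}, the comparison iso $\boldsymbol{\xi}$ has components $(\Id,\Id,\xi')$ with each $\xi'$ \emph{vertical}; on discrete fibrations $\xi'=\id$, so $\boldsymbol{\xi}$ is the identity and the pair is a strict 2-equivalence between $\dsccompcatCat$ and $\dscwccmdCat$. For the pair $(\wccmdtojdttps,\jdttpstowccmd)$ from \eqref{eqv-wccmd-jdtt}, the unit component at a \gcwf{} is the comparison functor $\emcmp{\Sigma,\Delta}$, whose weak inverse $\eminvcmp{\Sigma,\Delta}$ and mediating isos $\zeta,\xi$ from \eqref{lem:inverse_to_comparison} are all \emph{vertical}; on discrete fibrations $\zeta=\xi=\id$, so $\emcmp{\Sigma,\Delta}$ and $\eminvcmp{\Sigma,\Delta}$ are mutually inverse isomorphisms and the unit $\unitadjloocmd$ is a componentwise isomorphism. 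Crucially, the pseudo-naturality of $\unitadjloocmd$ is witnessed by the 2-cells $\hat{\zeta}$ of \eqref{rem:liftunit}, which lift the loose-morphism isos $\zeta\colon\Sigma'\dot{H}\natiso H\Sigma$; for a morphism between discrete objects $\zeta=\id$ (cf.\ \eqref{dsc-gcwf}), so $\hat{\zeta}=\id$ and $\unitadjloocmd$ is in fact \emph{strictly} 2-natural.

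Composing, on the discrete subcategories one has $\gcwftocomp\circ\comptogcfw=\Id$ (inherited from \eqref{eqv-compc-gcwf}) and $\comptogcfw\circ\gcwftocomp\cong\Id$ via the 2-natural isomorphism obtained from $\unitadjloocmd$ (since $\cmpcttowccmd\circ\wccmdtocmpct=\Id$ there), while the triangle identities hold strictly as in \eqref{eqv-wccmd-cmpct} and \eqref{eqv-wccmd-jdtt}; this is exactly an adjoint 2-equivalence. The restriction to the sub-2-categories on pseudo and strict morphisms is then immediate, because the component 2-functors already restrict there and the components of the units are strict \gcwf{} (respectively, comprehension-category) morphisms. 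The main obstacle is the first step---verifying that $p\emfrg{K}$ is discrete and that the pseudo-naturality of $\unitadjloocmd$ genuinely collapses to strict naturality---but both reduce to the observation that cartesian lifts, and hence all the relevant vertical isos, are trivial in discrete fibrations.
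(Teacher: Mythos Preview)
Your proposal is correct and follows essentially the same approach as the paper: restrict the four 2-functors to the discrete subcategories, then observe that every vertical iso appearing in the unit/counit data of \eqref{eqv-wccmd-cmpct} and \eqref{eqv-wccmd-jdtt} collapses to an identity, upgrading the biequivalence to a 2-equivalence. You are in fact more explicit than the paper on two points it leaves implicit---that $\boldsymbol{\xi}$ becomes the identity on the $\cmpcttowccmd/\wccmdtocmpct$ side, and that the pseudo-naturality 2-cells $\hat{\zeta}$ of $\unitadjloocmd$ collapse to identities so that the unit is strictly 2-natural. The one place where the paper is more careful than you is the restriction to \emph{strict} morphisms: there one must check not only that the unit components are strict \gcwf{} morphisms but that their \emph{inverses} are as well, which the paper secures by invoking from \eqref{lem:inverse_to_comparison} that the mate $(\emfrg{}\xi)^\#$ is vertical (hence the identity in the discrete case); your argument that $\xi=\id$ implicitly covers this, but it is worth stating.
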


\begin{proof}
As shown in~\eqref{cmpct2wccmd},~\eqref{wccmd2cmpct}, and~\eqref{cor:jdtt2wccmd},
the 2-functors $\cmpcttowccmd$, $\wccmdtocmpct$, and $\jdttpstowccmd$
fix the component on $\fibCat$ of the structures involved.
The 2-functor $\wccmdtojdttps$ fixes the first component on $\fibCat$,
and its second component on $\fibCat$ is the fibration of coalgebras~\eqref{cor:wccmd2jdtt},
which is clearly discrete if the original fibration is.
Therefore all the 2-functors involved restrict to 2-functors between
$\dsccompcatCat$ and $\cwfCat$.

Note also that the invertible 2-cells $\zeta$ and $\xi$ witnessing that
$\unitadjloocmd$ is weakly invertible have vertical components~\eqref{lem:inverse_to_comparison}.
Therefore $\unitadjloocmd$ is, in fact, invertible.

Finally, note that the mate of the (vertical) natural iso $\emfrg{}\xi$
of the inverse to the unit $\unitadjloocmd$ is itself a vertical natural iso~\eqref{lem:inverse_to_comparison}.
It follows that the inverse of $\unitadjloocmd$ is also a strict \gcwf{} morphism.
Therefore the 2-equivalence restricts as required.
\end{proof}

Recall that a \define{full comprehension category} is one whose comprehension functor $\chi$ is fully faithful.

It is well-known that a functor $F \colon \ctg{C} \to \ctg{C'}$ factors
as an injective-on-objects functor $\mathrm{a}F \colon \ctg{C} \to F\!\ctg{C}$
followed by a fully faithful one $\mathrm{o}F \colon F\!\ctg{C} \to \ctg{C'}$.
The category $F\!\ctg{C}$ has the same objects of $\ctg{C}$,
and $F\!\ctg{C}(X,Y) := \ctg{C'}(FX,FY)$.
It is also well-known that this factorisation is part of an orthogonal factorisation on $\Cat$,
and that it provides a reflection of the arrow category $\Cat^2=:\catof{Fun}$ into the full sub-category of fully faithful functors.
\begin{equation}
\begin{tikzcd}[column sep=5em]\label{fullfaith-refl}
\catof{{f\textbf{\&}fFun}}	\ar[r,shift right=.5ex,hook,""{name=rg}]
&	\catof{{Fun}}
\ar[l,end anchor={[xshift=-.5ex]north east},start anchor=north west
	,shift left=1ex,bend right=6ex,""{name=lt}]
\ar[from=rg,to=lt,phantom,"\adj{-90}"{description,pos=.4}]
\end{tikzcd}
\end{equation}
Moreover, the factorisation extends to morphisms in $\Cat/\ctg{B}$,
as well as to morphisms in $\fibCat_{\ctg{B}}$,
the category of fibrations over $\ctg{B}$.
Similarly, the reflection also works replacing $\Cat^2$ with $(\Cat/\ctg{B})^2$ or $\fibCat_{\ctg{B}}^2$.
With these observations, it is possible to see that
the reflection lifts to a reflection of comprehension categories and strict morphisms into full comprehension categories and strict morphisms, see~\cite[Lemma~4.9]{comprehensioncats}
where the result is attributed to Erhard.
The reflector maps $(p,\chi)$ to its \define{heart} $(p^\heartsuit,\chi^\heartsuit)$,
where $\chi^\heartsuit = \mathrm{o}\chi$,
and $p^\heartsuit\colon \chi\ctg{E} \to \ctg{B}$
is the unique functor induced by the universal property of the unit $\mathrm{a}\chi$.

On the other hand,
it is also easy to see that the reflection in~\eqref{fullfaith-refl} lifts to a 2-reflection on functors and \emph{pseudo} morphisms.
\begin{equation}\label{fullfaith-refl-ps}
\begin{tikzcd}[column sep=5em]
\catof{\pseudosub{f\textbf{\&}fFun}}	\ar[r,shift right=.5ex,hook,""{name=rg}]
&	\catof{\pseudosub{Fun}}
\ar[l,end anchor={[xshift=-.5ex]north east},start anchor=north west
	,shift left=1ex,bend right=6ex,""{name=lt}]
\ar[from=rg,to=lt,phantom,"\adj{-90}"{description,pos=.4}]
\end{tikzcd}
\end{equation}
More precisely, $\catof{\pseudosub{Fun}}$ is the 2-category where
the 1-cells are squares commuting up to a natural iso,
and the 2-cells are the 2-cells of $\Cat^2$ compatible with the natural isos.
This construction does not seem to give a reflection
when instead of $\catof{\pseudosub{Fun}}$ one considers $\catof{Fun_{lax}}$,
where 1-cells are squares filled with an arbitrary natural transformation.

\begin{proposition}\label{fullfaith-cmpcat}
The heart of a comprehension category
lifts to a 2-reflection to the inclusion of full comprehension categories and pseudo morphisms into
comprehension categories and pseudo morphisms.
\begin{equation}\label{fullfaith-cmpcat-adj}
\begin{tikzcd}[column sep=5em]
\fllcompcatpsCat	\ar[r,shift right=.5ex,hook,""{name=rg}]
&	\compcatpsCat
\ar[l,end anchor={[xshift=-.5ex]north east},start anchor=north west
	,shift left=1ex,bend right=6ex,"{(-)^\heartsuit}"',""{name=lt}]
\ar[from=rg,to=lt,phantom,"\adj{-90}"{description,pos=.4}]
\end{tikzcd}
\end{equation}

A (split) cleavage on the fibration $p$ induces a (split) cleavage on $p^\heartsuit$,
so that the 2-reflection restricts to the full sub-2-categories on split comprehension categories.

Moreover, the 2-reflection also restricts to the 2-full sub-2-categories on split comprehension categories,
where morphisms preserve the cleavage.
\begin{equation}\label{fullfaith-cmpcat-spl}
\begin{tikzcd}[column sep=5em]
\fllsplcompcatpsCat	\ar[r,shift right=.5ex,hook,""{name=rg}]
&	\splcompcatpsCat
\ar[l,end anchor={[xshift=-.5ex]north east},start anchor=north west
	,shift left=1ex,bend right=6ex,"{(-)^\heartsuit}"',""{name=lt}]
\ar[from=rg,to=lt,phantom,"\adj{-90}"{description,pos=.4}]
\end{tikzcd}
\end{equation}

All these 2-reflections restrict to the sub-2-categories on strict morphisms.
\end{proposition}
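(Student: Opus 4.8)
The plan is to obtain the 2-reflection by lifting the functor-level 2-reflection \eqref{fullfaith-refl-ps} to comprehension categories, just as the strict statement of \cite[Lemma~4.9]{comprehensioncats} lifts \eqref{fullfaith-refl}. On objects the heart $(p^\heartsuit,\chi^\heartsuit)$ is already given, and the unit at $(p,\chi)$ is the triple $(\id_{\ctg B},\mathrm a\chi,\id)$, which is a \emph{strict} morphism of comprehension categories because $\chi=\chi^\heartsuit\circ\mathrm a\chi$ by construction of the factorisation. First I would confirm that $(p^\heartsuit,\chi^\heartsuit)$ really is a full comprehension category by transporting the cleavage of $p$ along $\mathrm a\chi$. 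Since $\mathrm a\chi$ is the identity on objects, for a chosen $p$-cartesian lift $\bar b\colon b^{*}E\car E$ the arrow $\mathrm a\chi(\bar b)=\chi\bar b$ is $p^\heartsuit$-cartesian: this is immediate from the universal property of pullbacks, because $\chi$ sends $\bar b$ to a pullback square in $\ctg B$ (Definition~\ref{def:compcat}) and $\chi^\heartsuit=\mathrm o\chi$ is fully faithful, so that $p^\heartsuit$-cartesianness of an arrow $\alpha$ reduces to $\chi^\heartsuit\alpha$ being a pullback. Hence $p^\heartsuit$ is cloven, $\chi^\heartsuit$ preserves cartesian arrows, and, crucially, the chosen $p^\heartsuit$-lifts lie in the image of $\mathrm a\chi$. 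Splitness transports as well, as the cleavage of $p^\heartsuit$ is the functorial image under $\mathrm a\chi$ of that of $p$.

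For the universal property on 1-cells, consider a pseudo morphism $(B,H,\zeta)\colon(p,\chi)\to(q,\psi)$ into a full comprehension category, so $\psi$ is fully faithful. Reading it as a pseudo square from $\chi$ to $\psi$ in $\catof{\pseudosub{Fun}}$ and applying \eqref{fullfaith-refl-ps}, I would factor it uniquely through the unit $\mathrm a\chi$ as a pseudo square $(\bar H,B\due,\bar\zeta)\colon\chi^\heartsuit\to\psi$. The bottom functor is still $B\due$, since the unit is the identity on the base; $\bar\zeta$ is invertible, since fillers in $\catof{\pseudosub{Fun}}$ are isos; and $\cod\bar\zeta=\id$, since $\cod\zeta=\id$ and $\mathrm a\chi$ is the identity on objects, so the components of $\bar\zeta$ and $\zeta$ agree. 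It remains to see that $(B,\bar H)$ is a 1-cell of $\fibCat$. Here I would use that $\bar H\circ\mathrm a\chi=H$ together with the fact that the chosen $p^\heartsuit$-lifts are the $\chi\bar b$: then $\bar H(\chi\bar b)=H\bar b$ is $q$-cartesian because $H$ preserves cartesian arrows, so $\bar H$ preserves chosen lifts, and hence all cartesian arrows once one writes an arbitrary cartesian arrow as a chosen lift followed by a vertical iso and invokes closure of cartesian arrows under composition (\ref{wccmd-coalg}).

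The main obstacle is precisely this cartesian-preservation of $\bar H$: a naive argument through $\bar\zeta$ would need $B$ to preserve the pullback $\chi^\heartsuit\alpha$, which it need not do in general. What saves the day is that $H$ preserving cartesian arrows forces $B$ to preserve exactly the comprehension pullbacks $\chi\bar b$, which is why factoring through the image of $\mathrm a\chi$ is essential. The very same computation yields the split refinements: if $(B,H)$ preserves the cleavage, then $\bar H$ sends the chosen $p^\heartsuit$-lift $\chi\bar b$ to the chosen $q$-lift $H\bar b$, so $(B,\bar H)$ preserves the cleavage, giving \eqref{fullfaith-cmpcat-spl}. For strict morphisms $\zeta=\id$, the strict reflection \eqref{fullfaith-refl} produces a strictly commuting factorisation, i.e.\ $\bar\zeta=\id$, so that each of these reflections restricts to strict morphisms, recovering \cite[Lemma~4.9]{comprehensioncats}.

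Finally, for 2-cells I would invoke that \eqref{fullfaith-refl-ps} is a 2-reflection: a 2-cell $(\psi,\phi)$ between pseudo morphisms of comprehension categories is in particular a 2-cell between the underlying pseudo squares, hence factors uniquely through the unit, and the defining compatibility of comprehension-category 2-cells (Definition~\ref{cmpct-2cats}) is preserved because it is a whiskering identity that $\mathrm a\chi$ and $\chi^\heartsuit$ respect. This produces an isomorphism of hom-categories, natural in both variables, so that $(-)^\heartsuit\dashv\iota$ is a 2-reflection; its counit is invertible, since the heart of a full comprehension category is isomorphic to itself ($\mathrm a\psi$ is an isomorphism when $\psi$ is fully faithful). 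Restriction to the split and strict sub-2-categories then follows from the paragraphs above.
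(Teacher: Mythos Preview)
Your argument is correct and follows the same overall strategy as the paper: lift the functor-level 2-reflection~\eqref{fullfaith-refl-ps} to comprehension categories via the heart construction and its unit $(\id_{\ctg{B}},\mathrm{a}\chi,\id)$. The paper presents the argument slightly differently, expressing the hom-category $\compcatpsCat((p,\chi),(p',\chi'))$ as a limit in $\Cat$ of a diagram with vertices $\fibCat(p,p')$ and $\catof{\pseudosub{Fun}}(\chi,\chi')$, and arguing that precomposition with $\mathrm{a}\chi$ is invertible on each vertex; from this the isomorphism of hom-categories is read off. Your direct verification of the universal property is more explicit precisely where the paper is terse: the paper simply records that $(-)\circ\mathrm{a}\chi\colon\fibCat(p^\heartsuit,p')\to\fibCat(p,p')$ is invertible, whereas you correctly isolate the nontrivial step---that the factored functor $\bar H$ preserves cartesian arrows---and supply it by reducing to chosen lifts, which lie in the image of $\mathrm{a}\chi$ by construction. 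The handling of cleavages, splitness, and the strict restriction is the same in both proofs.
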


\begin{proof}
Note that $\compcatpsCat((p,\chi),(p',\chi'))$ is the limit in $\Cat$ of the diagram of forgetful functors below
\[\begin{tikzcd}[row sep=2em]
\fibCat(p,p')	\ar[d] \ar[drr,bend left=1em]
&&	\catof{\pseudosub{Fun}}(\chi,\chi')	\ar[dl,bend right=1.5em,crossing over] \ar[d]
\\
\Cat(\ctg{B},\ctg{B}')	\ar[r,"{(-)^2}"]
&	\Cat(\ctg{B}^2,\ctg{B}'^2)
&	\Cat(\ctg{E},\ctg{E}')
\end{tikzcd}\]

When $\chi'$ is fully faithful, the functor
$(-)\circ \mathrm{a}\chi \colon \catof{\pseudosub{f\textbf{\&}fFun}}(\chi^\heartsuit,\chi')
\to \catof{\pseudosub{Fun}}(\chi,\chi')$
is invertible by~\eqref{fullfaith-refl-ps}.
It is also follows that the functor
$(-)\circ \mathrm{a}\chi \colon \fibCat(p^\heartsuit,p') \to \fibCat(p,p')$
is invertible.
Therefore
\[\begin{tikzcd}[column sep=4em]
\fllcompcatpsCat((p^\heartsuit,\chi^\heartsuit),(p',\chi'))
\ar[r,"\sim","(-)\circ \mathrm{a}\chi"']
&	\compcatpsCat((p,\chi),(p',\chi'))
\end{tikzcd}\]
as required.

A cartesian lift for $p^\heartsuit$ is the image under $\chi$ of a cartesian lift for $p$.
This choice is split since $\chi$ is a functor.
The claim that the induced 1-cells preserve the cleavage has a straightforward verification.
\end{proof}

It is well known, and easy to verify,
that discrete fibrations are 2-coreflective in split fibrations.
The coreflector maps a fibration
to its wide subfibration with only the arrows of the cleavage.
The total category is indeed a category since the cleavage is split,
and the fibration is clearly discrete.
The 2-coreflection lifts to a 2-coreflection between
discrete comprehension categories and split comprehension categories
\begin{equation}\label{disc-split}
\begin{tikzcd}[column sep=5em]
\dsccompcatCat	\ar[r,shift right=.5ex,hook,""{name=la}]
&	\splcompcatCat
\ar[l,start anchor={[xshift=.2ex,yshift=-.5ex]north west}
	 ,end anchor={[xshift=-.5ex,yshift=-.5ex]north east}
	 ,bend right=2em
	 ,"{|-|}"'
	 ,""{name=ra}]
\ar[from=la,to=ra,phantom,"\adj{90}"{description}]
\end{tikzcd}
\end{equation}
which restricts to subcategories on pseudo and strict morphisms.

By composing the adjunctions in~\eqref{disc-split} and in~\eqref{fullfaith-cmpcat-spl},
one obtains the right-hand 2-equivalence below.
The left-hand one is from~\eqref{disceqv}.
\begin{equation}\label{eqv-cwf-fullspl}
\cwfpsCat \equiv \dsccompcatpsCat \equiv \fllsplcompcatpsCat
\end{equation}
When restricted to strict morphisms, it is the equivalence in~\cite[Proposition~1.2.4]{BlancoJ}.

\begin{remark}\label{ps-cwfmorph}
Note that the 1-cells in the 2-categories in~\eqref{eqv-cwf-fullspl}
involve functors that preserve the cleavage,
since morphisms in $\fibCat$ between discrete fibrations must preserve the (split) cleavage,
as those are the only (cartesian) arrows.
To obtain more morphisms, one should use~\eqref{fullfaith-cmpcat-adj} instead of~\eqref{fullfaith-cmpcat-spl} and,
given categories with families $(u,\du)$ and $(u',\du')$,
look at
\begin{equation}\label{pscwfmorph}
\fllcompcatpsCat(\gcwftocomp(u,\du)^\heartsuit,\gcwftocomp(u',\du')^\heartsuit).
\end{equation}
Note that those in the image of $(-)^\heartsuit$ do preserve the cleavage,
but the others do not (necessarily).
If we also require the base categories to have terminal objects preserved by the morphisms
then the morphisms in~\eqref{pscwfmorph} are the \emph{pseudo cwf-morphisms} between $(u,\du)$ and $(u',\du')$
of Clairambault and Dybjer~\citey[Definition~3.1]{ClairambaultDybjer}.

Indeed, consider first the functor $\textbf{\textit{T}}\colon \ctg{C}\opp \to \Cat$ from~\cite[Proposition~2.7]{ClairambaultDybjer}
associated to a category with families $(\ctg{C},T \colon \ctg{C}\opp \to \mathsf{Fam})$.
The (split) fibration $\pi_{\textbf{\textit{T}}} \colon \int \textbf{\textit{T}} \to \ctg{C}$ corresponding to
$\textbf{\textit{T}} \colon \ctg{C}\opp \to \Cat$ under the Grothendieck construction
is the underlying fibration of the heart $(\gcwftocomp(\pi_{\mathrm{Ty}_T},\pi_{\mathrm{Tm}_T}))^\heartsuit$
of the comprehension category associated to $(\ctg{C},T)$,
where $(\pi_{\mathrm{Ty}_T},\pi_{\mathrm{Tm}_T})$ is the \gencwf{y} described in~\eqref{exa:free_syntactic}.

As observed in~\cite{ClairambaultDybjer}, a pseudo cwf-morphism $(F,\sigma)\colon$ $(\ctg{C},T) \to (\ctg{C}',T')$
induces a morphism of fibrations $(F,H)\colon \pi_{\mathrm{Ty}_T} \to \pi_{\mathrm{Ty}_{T'}}$ which preserves
context comprehension up a natural iso $\rho$.
This means precisely that $(F,H,\rho)$ is an object in~\eqref{pscwfmorph}.

Conversely, given an object $(F,H,\zeta)$ in~\eqref{pscwfmorph},
the isomorphism $\theta$ is given by the fact that $H^\heartsuit$ preserves cartesian arrows.
It ``preserves substitution in terms'' since postcomposition with $\theta$ preserves sections of display maps.
The ``coherence conditions'' involving $\theta$ correspond to the fact that the cleavage of the heart of a cwf is split.
The iso $\rho$ witnessing the preservation of context comprehension is $\zeta$ itself.
\end{remark}

\bibliography{thebib}
\bibliographystyle{dinat}

\end{document}